\renewcommand{\PrintDOI}[1]{\doi{#1}}
\definecolor{darkgreen}{rgb}{0,0.5,0}
\definecolor{darkblue}{rgb}{0,0.1,0.5}
\newtheoremstyle{introTheorems}
  {\topsep}
  {\topsep}
  {\itshape}
  {0pt}
  {\bfseries}
  {}
  { }
  {\thmname{#1}
  \textnormal{\bf\thmnote{#3}.\!\!}
  }
\newtheorem{theorem}{Theorem}[section]
\newtheorem{assumption}[theorem]{Assumption}
\newtheorem{conjecture}[theorem]{Conjecture}
\newtheorem{corollary}[theorem]{Corollary}
\newtheorem{definition}[theorem]{Definition}
\newtheorem{example}[theorem]{Example}
\newtheorem{lemma}[theorem]{Lemma}
\newtheorem{problem}[theorem]{Problem}
\newtheorem{remark}[theorem]{Remark}
\theoremstyle{introTheorems}
\newtheorem{introTheorem}{Theorem}
\newtheorem{introLemma}{Lemma}
\newcommand{\Z}{\mathbb{Z}}
\newcommand{\R}{\mathbb{R}}
\newcommand{\C}{\mathbb{C}}
\newcommand{\CC}{\mathcal{C}}
\newcommand{\BB}{\mathcal{B}}
\newcommand{\UU}{\mathcal{D}}
\newcommand{\V}{\mathcal{V}}
\newcommand{\W}{\mathcal{W}}
\newcommand{\cZ}{\mathcal{Z}}
\newcommand{\Schauenburg}{\mathcal{S}}
\newcommand{\Vect}{\mathrm{Vect}}
\newcommand{\Res}{\mathrm{Res}}
\newcommand{\Mod}{\mathrm{Mod}}
\newcommand{\Comod}{\mathrm{Comod}}
\newcommand{\RepVOA}{\mathrm{Rep}}
\newcommand{\ord}{\mathrm{ord}}
\newcommand{\id}{\mathrm{id}}
\newcommand{\Ind}{\mathrm{ind}}
\newcommand{\im}{Im}
\renewcommand{\i}{\mathrm{i}}
\renewcommand{\1}{\mathrm{1}}
\renewcommand{\d}{\mathrm{d}}
\newcommand{\loc}{\mathrm{loc}}
\renewcommand{\split}{\mathrm{split}}
\newcommand{\prim}{\mathrm{prim}}
\newcommand{\Ext}{\mathrm{Ext}}
\newcommand{\Hom}{\mathrm{Hom}} 
\newcommand{\forget}{\mathrm{forget}}
\newcommand{\FPdim}{\mathrm{FPdim}}
\renewcommand{\sl}{\mathfrak{sl}}
\newcommand{\zem}{\mathfrak{z}}
\renewcommand{\l}{\mathfrak{l}}
\newcommand{\g}{\mathfrak{g}}
\newcommand{\Nichols}{\mathfrak{N}}
\newcommand{\NicholsOf}{\mathfrak{B}}
\newcommand{\YD}[1]{{}_{#1}^{#1}\mathcal{YD}}
\renewcommand{\Schauenburg}{\mathcal{S}}
\newcommand{\Heis}{\pi}
\newcommand{\catC}{\mathcal{C}}
\newcommand{\catD}{\mathcal{D}}
\newcommand{\rev}{\mathrm{rev}}
\newcommand{\NicholsScreenings}{\mathfrak{Z}}
\renewcommand{\catC}{\mathcal{C}}
\renewcommand{\catD}{\mathcal{D}}
\title{A conditional algebraic proof of the logarithmic Kazhdan-Lusztig correspondence}
\author{Simon D. Lentner, University of Hamburg 
}
\date{}
\begin{document}

\maketitle

\begin{abstract}
The logarithmic Kazhdan-Lusztig correspondence is a conjectural equivalence between braided tensor categories of representations of small quantum groups and representations of certain vertex operator algebras.

    In this article we prove such an equivalence, and more general versions, using mainly algebraic arguments that characterize the representation category of the quantum group by quantities that are accessible on the vertex algebra side. Our proof is conditional on suitable analytic properties of the vertex algebra and its representation category. More precisely, we assume that it is a finite braided rigid monoidal category where the Frobenius-Perron dimensions are given by asymptotics of analytic characters. 
\end{abstract}

\tableofcontents

\section{Introduction}

\subsection{Conventions}

We are working over the field of complex numbers $\C$. All our categories are $\C$-linear and abelian. We use the term \emph{tensor category} for finite rigid monoidal categories as in \cite{EGNO15}. We distinguish the three types of representations: For modules over a commutative algebra $A$ in a braided monoidal category~$\UU$ with the tensor product $\otimes_A$ we use the notation $\UU_A$. For modules over a bialgebra $B$ in a braided monoidal category $\CC$ with the tensor product $\otimes$ in $\CC$ we use the notation $\Mod(B)(\CC)$. As linear categories, these notations coincide. For representations of a vertex operator algebra $\V$ we use the notation $\RepVOA(\V)$ with tensor product $\boxtimes_\V$.

\subsection{Background}

A \emph{vertex operator algebra} $\V$ is, very roughly speaking, a commutative algebra whose multiplication depends analytically on a complex variable $z$. It is a mathematical axiomatization of the chiral parts of a $2$-dimensional conformal quantum field theory. Its origin in mathematics is due to its distinguished role in the celebrated proof of the \emph{moonshine conjecture}, a surprising numerical coincidence between coefficients of the modular $j$-function and the dimensions of simple modules of the sporadic monster group. Mathematical textbooks on vertex algebras are  \cites{Kac97,FBZ04}, a broad introduction to moonshine is~\cite{Gan06}.

There is a notion of \emph{vertex modules}, which features under suitable assumptions a tensor product with a commutativity law, so to a well-behaved vertex operator algebra $\V$ there is associated a \emph{braided tensor category} $\RepVOA(\V)$, see \cites{HL94,HLZ06}. The data visible on the category level still retains some of the analytic aspects of the vertex operator algebra, for example, the braiding is related to the multivaluedness of certain analytic functions and the $\mathrm{SL}_2(\Z)$-action on certain $\Hom$-spaces captures the transformation behavior of certain modular forms, called $q$-characters, attached to each vertex module. These are particular instances of a topological field theory picture, which becomes more involved in the non-semisimple case.

\medskip

On the algebra side, a very interesting class of braided tensor categories is given by representations of the \emph{quantum group} $U_q(\g)$, which is the universal enveloping algebra of a semisimple complex finite-dimensional Lie algebra $\g$, deformed by a generic complex parameter $q$. Its category of representations is a semisimple infinite braided tensor category, which is as an abelian category the category of representations of $\g$, but the braiding and associator are nontrivial expressions involving $q$. They are related to the monodromy of the Knishnik-Zomolochikov partial differential equation, which was the original motivation for defining quantum groups in \cite{Drin89}. For an introduction to these topics and its application to produce topological invariants such as the Jones polynomial, the reader is referred to the textbook \cite{Kas97}.

The \emph{Kazhdan-Lusztig correspondence} \cite{KL93} is an equivalence of braided tensor categories of representations of $U_q(\g)$ and of a vertex operator algebra associated to the affine Lie algebra $\hat{\g}$ at a level $\kappa$ related to $q$, at generic values. In some sense, this gives a physical context to the observations above.

\medskip

If $q$ is specified to an $\ell$-th root of unity, then the quantum group $U_q(\g)$ admits a finite-dimensional quotient $u_q(\g)$, called small quantum group \cite{Lusz89}. Its category of representations is a nonsemisimple finite braided tensor category, related to the representations of $\g$ in finite characteristic.   

The \emph{logarithmic Kazhdan-Lusztig correspondence}$\,$ \cites{FGST05, FT10, AM14, Len21, Sug21, CLR23} gives a conjectural construction of a vertex operator algebra $\W$, whose braided tensor category of representations is equivalent to the category of representations of the small quantum group $u_q(\g)$, or of generalized versions thereof. For more details, contributions and known cases, the reader is referred e.g. to our recent article \cite{CLR23} and references therein. 

\subsection{Content of this article}

The present article builds on the approach we developed in \cite{CLR23} in order to prove the logarithmic Kazhdan-Lusztig correspondence in small cases, such as~$\sl_2$. The idea is to classify braided tensor categories $\UU$ that contain a commutative algebra $A$ with a known category of local $A$-modules, which is the a-priori knowledge on the vertex algebra side in our situation. In \cite{CLR23} we had assumed in addition that $\UU$ is known as abelian category (from earlier by-hand computations) and then we reconstructed the braided tensor structure. In the present article, we will reconstruct $\UU$ merely from information on certain $\Ext^1$-groups, using a Tannaka-Krein type reconstruction result we recently proved for this purpose \cite{LM24} and deep known results about Nichols algebras. These $\Ext^1$-groups in turn can be obtained systematically on the vertex algebra side.

Our main application is the computation of the category of modules of a vertex algebra algebra $\W$ defined as a kernel of screening operators inside a lattice vertex algebra $\V_\Lambda$. We prove that the category of representation of a certain Nichols algebra of diagonal type, which we have previously shown to be the algebra of screening operators \cites{ST12,Len21}, is contained in the category of twisted $\V$-modules over $\W$. From this it follows from the Schauenburg functor in \cite{CLR23} that the category of representations of a generalized small quantum group \cites{Heck10, AY15, CLR23} is a subquotient of the category of $\W$-modules. This result also contains many interesting cases beyond finite-dimensional semisimple Lie algebra and Lie superalgebras, and it generalizes beyond kernels of screenings inside lattice vertex algebras.

If now in addition, for example, the Frobenius-Perron dimensions on both sides of the logarithmic Kazhdan-Lusztig correspondence are known to match, then we have an equivalence of braided tensor categories. 
In particular, we can prove the logarithmic Kazhdan Lusztig correspondence in all cases related to simply-laced Lie algebras \cites{FGST05, FT10,AM14,Len21, Sug21}, if we allow ourselves to assume the \emph{existence} of a tensor product and a braiding on the (analytic) vertex algebra side in accordance with \cite{HLZ06}, that enjoys favorable properties, namely finiteness, rigidity, and the quantum dimensions computed in \cite{BM17} coinciding with categorical dimensions (nondegeneracy follows in our proof). Proving these existence results on the vertex algebra side are difficult question of a very different nature, with major recent developments in \cites{CY21, CJORY21, McR21, CMY21, CMY22, CMY23, NORW24}.\footnote{The experts will agree that rigidity and even more quantum dimensions, which in the semisimple case depends on the Verlinde formula, are quite substantial assumptions. For the role of rigidity in this article and possible resolutions see Remark \ref{problem_rigidity}.}
Note that our approach also considerably simplifies proofs in existing cases, and could be applied to other cases where the vertex tensor category is already established, see the table in \cite{CMSY24}.\\

We now discuss the content of the article in more detail: \\

In Section \ref{sec_comm} we recall our main setup from \cite{CLR23}: Let $\UU$ be a braided monoidal category. Assume that $A$ is a commutative algebra in $\UU$, then there is an associated monoidal category of modules $\UU_A$ and a braided monoidal category of local modules $\UU_A^\loc$. There is a tensor functor, given by induction $A\otimes(-)$, and its right adjoint, the lax tensor functor forgetting the $A$-action.
$$
 \begin{tikzcd}[row sep=10ex, column sep=15ex]
   {\UU} \arrow[shift left=1]{r}{A\otimes(-)}  
   & 
   \arrow[shift left=1]{l}{\mathrm{forget}_A} 
   \BB  
   =\UU_A \\
   &\arrow[hookrightarrow, shift left=6]{u}{}
   \CC 
   =\UU_A^\loc
\end{tikzcd}
$$
From the perspective in this article, $\UU$ is difficult, unknown and non-semisimple and the goal is to reconstructed it from $\CC$, which is known and semisimple, and from additional knowledge about $\BB$, in our case the group $\Ext^1$ of indecomposable extension in $\BB$ of simple objects contained in the subcategory $\CC$. 

Some particular technical preliminaries we will require are as follows: In Section \ref{sec_commAlgebras} we recall from \cites{EO21,CMSY24,CSZ25} under which circumstances $\catC_A$ is rigid if $\catC$ is. In Section \ref{sec_simpleCurrent} we recall for simple current extensions that induction is exact, and that simples and indecomposables extensions of simples are preserved in fixpoint free cases. This is used later-on to be flexible in our choice of Cartan part. In Section~\ref{sec_FP} we recall from \cites{EGNO15,LW21, SY24} properties of the Frobenius-Perron dimension, in particular a formula for $\catC_A$ and bounds for $\catC_A^\loc$, even if $\catC$ has a-priori a degenerate braiding. \\


In Section \ref{sec_reconstruction} we first review one of our key ideas in \cite{CLR23}: A general braided monoidal category $\UU$ with a commutative algebra $A$ can be reconstructed as a relative Drinfeld center $\cZ_\CC(\BB)$ \cite{LW22} of $\BB=\UU_A$ relative to $\CC=\UU_A^\loc$, using a version of a functor introduced by Schauenburg \cite{Sch01}. \bigskip

The two fundamental example classes we have in mind for such a situation~$\UU,\UU_A,\UU_A^\loc$ are the following. Proving the logarithmic Kazhdan-Lusztig correspondence means to bring these two into match:

\begin{itemize}
\item For a (quasi-)Hopf algebra with $R$-matrix, the category of representations is a braided monoidal category. If $u_q(\g)$ is the small quantum group associated to a semisimple finite-dimensional complex Lie algebra $\g$ and an $\ell$-th root of unity, then the dual Verma module of weight $0$ is a commutative algebra $A$, the modules over $A$ are equivalent (by induction) to the category of representations of the Borel part $u_q(\g)^{\geq 0}$, and the local modules are equivalent to the category of representations of the Cartan part $u_q(\g)^0$, which is the category $\Vect_\Gamma$ of vector spaces graded by weights or cosets of weights. Constructing the category of representations of $u_q(\g)$ as a relative Drinfeld center \cites{Lau20,LW22} is a categorical version of the construction of $u_q(\g)$ as a Drinfeld double of $u_q(\g)^{\geq 0}$ while merging the two copies of $u_q(\g)^0$.

The Borel part, in turn, has the form of a semidirect product or Radford biproduct of a  \emph{Nichols algebra} over the Cartan part. The Nichols algebra~$\NicholsOf(X)$ is a certain universal graded Hopf algebra attached to an object $X$ in a braided monoidal category $\CC$, such that $X$ becomes the  homogeneous component of $\NicholsOf(X)$ of degree $1$ and the space of primitive elements. A main textbook on this topic is \cite{HS20}. Reconstructing an arbitrary pointed Hopf algebra from the corresponding Nichols algebra in $\Vect_\Gamma$ is a main idea in the Andruskiewitsch-Schneider program \cite{AS10}. In a certain sense, one could describe our attempts to reconstruct $\UU$ as a categorical version and as a braided version (whence a Drinfeld center) of this program. 

If $\ell=2p$ has even order, then having a nondegenerate braiding requires that $\Vect_\Gamma$ has a nontrivial associator, which means that $u_q(\g)^{0}$ is a quasi-Hopf algebra. Then the relative center of the Nichols algebra representation category can be read as a clean categorical definition of a factorizable quasi-Hopf algebra $\tilde{u}_q(\g)$ in the even order case.\footnote{We expect this construction to coincide with the earlier alternatives constructions of this quasi-Hopf algebra in \cites{CGR20, GLO18, Neg21}, but the details would need to be worked out. }

\item For a $C_2$-cofinite vertex operator algebra $\mathcal{V}$, the category of representations is a braided monoidal category \cite{HLZ06}, with tensor product $\boxtimes_\mathcal{V}$ and braiding defined by analytic means. If $\mathcal{W}\subset \mathcal{V}$ is a conformal embedding of  vertex operator algebras, then $A=\mathcal{V}$ can be regarded as a commutative algebra in the category $\UU$ of representations of $\mathcal{W}$, and the local modules over $A$ recover the category $\CC$ of representations of $\mathcal{V}$.

The category $\BB$ of all $A$-modules in $\CC$ is less studied on the vertex algebra side, it consists of $\V$-modules which are nonlocal (that is, the vertex operator contains multivalued functions) but become local when restricted to $\W$. For an orbifold by a finite group $\W=\V^G$, such modules are called \emph{twisted modules} and it can be shown that they are classified in terms of \emph{$g$-twisted modules} with a precisely prescribed monodromy depending on a group element $g$ \cites{FLM88, DL96, DLM96}. 

The vertex algebras $\mathcal{W}$ relevant to the logarithmic Kazhdan-Lusztig conjecture are defined as vertex subalgebras of easier vertex algebras $\V$, this is called free field realization. In our case, $\V=\mathcal{V}_\Lambda$ is a lattice vertex algebra and its category of representations is $\CC=\Vect_\Gamma$ associated to the discriminant form of the lattice $\Gamma=\Lambda^*/\Lambda$. The vertex-subalgebra $\mathcal{W}$ is defined as a kernel of nonlocal screening operators $\zem_1,\ldots,\zem_n$ associated to elements $\alpha_1,\ldots,\alpha_n\in \Lambda^*$, which we have shown in \cite{Len21} to fulfill the relations of the generators in the Nichols algebra $\NicholsScreenings$ with respect to the braiding $\Vect_\Gamma$.\footnote{Let us remark that we will not use this piece of information in the present article, rather, we use directly the universal property of the Nichols algebra and algebraic arguments.}  The category $\BB$ consists of indecomposable extensions of local $\V_\Lambda$-modules. We would propose in general to interpret $\W$ as a kind of orbifold of $\V$ by the Nichols algebra of screenings $\NicholsScreenings$. Ultimately, we expect the following big picture statement to be true. Its interpretation would be that all twisted modules are ''$M$-twisted'' for some representation $M$ of the Nichols algebra of screenings, and all such modules can be obtained by a deformation procedure (because the action is inner).
\end{itemize}

\begin{problem}\label{prob_bigPicture}
Let $\W\subset \V$ be a kernel of a set of screening operators $\zem_1,\ldots,\zem_n$. Prove that the tensor category $\BB$ of modules over the commutative algebra $A=\V$ inside the category of $\W$-modules (''twisted $\V$-modules over $\W$'') is equivalent to the tensor category of modules over the Nichols algebra $\NicholsScreenings$ generated by the screening operators inside $\RepVOA(\V)$.

Note that there is indeed a good candidate for a fully faithful exact functor 
$$F:\Mod(\NicholsScreenings)(\RepVOA(\V))\to \RepVOA(\W)_A$$ 
as follows: Let again $\V$ be a lattice vertex algebra. Let $\mathcal{M}$ be a $\V$-module with an additional action 
$$\mathcal{X}\boxtimes_\V \mathcal{M}\to \mathcal{M}$$
of the Nichols algebra $\NicholsOf(\mathcal{X})$ of the $\V$-module $\mathcal{X}=\bigoplus_i \V_{\alpha_i+\Lambda}$. Then we can consider the \emph{$\Delta$-deformation} $\widetilde{\mathcal{M}}$ of $\mathcal{M}$ in the sense of \cites{DLM96, Li97, FFHST02, AM09}. 
The modified action of $\V$ on $\widetilde{\mathcal{M}}$ contains logarithms coupled to the screening operators $\zem_i$ associated to $\alpha_i$, so this should produce a $\V$-twisted module which is single-valued over the kernel $\W$ of these screening operators. The question is whether this functor is an equivalence of categories.
\end{problem}

Hence the goal of our article is to first reconstruct $\BB$ as representations of some Hopf algebra $\Nichols$, which we then prove to coincide with the algebra of screenings $\NicholsScreenings$ by algebraic methods. We then use the previous results on the Schauenburg functor to prove that $\UU$ is its relative Drinfeld center. We now discuss in more detail how this is achieved:\\

In Section \ref{sec_splitting}, for a monoidal category $\BB$ and a central full subcategory $\CC$, we define the subcategory $\BB^{\split}$ consisting of all objects in $\BB$ admitting a composition series with objects in $\CC$. Not surprisingly, this is a tensor subcategory of $\BB$ and there is a tensor functor $\l:\BB^\split\to \BB$ sending an object to the direct sum of its composition factors, see Lemma \ref{lm_splittingSubcat}.
By our recent result \cite{LM24}, any exact faithful tensor functor admitting a tensor functor section leads to a reconstruction by a categorical Hopf algebra (and vice versa); this is a categorical version of Tannaka-Krein reconstruction and the Radford projection theorem. In particular, this can be applied to $\BB^\split$ and yields:

\begin{introLemma}[\ref{lm_splitLM}]
Let $\CC$ be a braided tensor category, which is a central full abelian subcategory of a tensor category $\BB$, then there exists a Hopf algebra $\Nichols$ in $\CC$ and an equivalence of monoidal categories
$$\BB^{\split}\cong\Comod(\Nichols)(\CC).$$  
Here, the functor $\l:\BB^{\split}\to \CC$ corresponds to the functor forgetting the coaction of $\Nichols$ and the full embedding $\CC\hookrightarrow \BB^{\split}$ corresponds to the functor endowing an object in $\CC$ with the trivial $\Nichols$-coaction via the unit $\eta_\Nichols:1\to \Nichols$.
\end{introLemma}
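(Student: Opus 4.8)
The plan is to deduce both the Hopf algebra $\Nichols$ and the monoidal equivalence directly from the categorical Tannaka--Krein / Radford reconstruction theorem of \cite{LM24}, applied to the functor $\l\colon\BB^{\split}\to\CC$. By Lemma \ref{lm_splittingSubcat} we already know that $\BB^{\split}$ is a tensor category and that $\l$ is a tensor functor with values in $\CC$; with the conventions of \cite{EGNO15} adopted here this means in particular that $\l$ is exact and faithful. So the only things left to do are to exhibit a tensor-functor section of $\l$ and then to read off the statement from \cite{LM24}.

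For the section, note that every object of $\CC$ trivially carries the length-one composition series consisting of itself, so the given central full embedding $\CC\hookrightarrow\BB$ factors through a full embedding $\iota\colon\CC\hookrightarrow\BB^{\split}$; this $\iota$ is exact, faithful and monoidal --- the monoidal structure on $\BB^{\split}$ supplied by Lemma \ref{lm_splittingSubcat} restricts along $\iota$ to the one on $\CC$ inherited from $\BB$ --- and one checks from the construction of $\l$ that $\l\circ\iota\cong\id_{\CC}$ as tensor functors. Feeding the exact faithful tensor functor $\l$ together with its tensor section $\iota$ into \cite{LM24}, and using that $\CC$ is braided, one obtains a Hopf algebra $\Nichols$ in $\CC$ and an equivalence of monoidal categories $\BB^{\split}\cong\Comod(\Nichols)(\CC)$ under which $\l$ corresponds to forgetting the $\Nichols$-coaction and $\iota$ to equipping an object of $\CC$ with the trivial coaction $\id\otimes\eta_{\Nichols}$; this is precisely the assertion.

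The place where care is needed is the interface with \cite{LM24}: one has to confirm that its notion of tensor functor encodes the same exactness and faithfulness, that the braiding of $\CC$ is indeed the datum used there to endow $\Comod(\Nichols)(\CC)$ with its monoidal structure, and --- the conceptual heart of the matter --- that it is the presence of the section $\iota$ that forces the reconstructing object to be a genuine Hopf algebra internal to $\CC$ rather than a Hopf monad on $\CC$ or a Hopf algebroid in a $\CC$-bimodule category, in the same way that Radford's projection theorem turns a Hopf projection into a biproduct. The centrality of $\CC$ inside $\BB$ is what underlies both the monoidality of $\l$, via the half-braidings (already used in Lemma \ref{lm_splittingSubcat}), and the identification of the transported monoidal structure on $\Comod(\Nichols)(\CC)$ with the standard braided-comodule one.
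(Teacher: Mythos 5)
Your proposal matches the paper's proof: the paper likewise defines $\l:\BB^{\split}\to\CC$ as the functor sending an object to the direct sum of its composition factors, observes that it is an exact faithful tensor functor split by the embedding $\iota:\CC\hookrightarrow\BB^{\split}$, and invokes the reconstruction Theorem \ref{thm_LM} from \cite{LM24}. The extra care you devote to the interface with \cite{LM24} (monoidality of $\iota$, the role of the braiding and of centrality) is consistent with, and no less rigorous than, what the paper records.
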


As we applied the reconstruction to $\BB^\split$, by construction $\Nichols$ is a \emph{connected coalgebra} in $\CC$, that is, all simple $\Nichols$-comodules in $\CC$ are trivial $\Nichols$-comodules.\footnote{In the dual situation, one could say $\Nichols$ is a \emph{local algebra} in the standard algebra sense \cite{Zimm14}, which is unrelated to the notion of local modules used above.}  

\begin{example}
In semisimple situations, for example for group orbifold vertex algebras, we have $\BB^{\split}=\CC$ and $\BB$ contains new simple objects. On the other hand in the logarithmic Kazhdan-Lusztig context we have or expect that $\BB^{\split}=\BB$, so all twisted modules are extensions of local modules.\footnote{Recently \cite{CN24} Section 5.5 give very interesting criteria when this situation arises.}

In general we have a combination of these two situations, which can be interlocked: Consider the Nichols algebra of the non-invertible $g$-twisted object in the Tambara-Yamagami category. On the vertex algebra side, this corresponds, supposedly, to a $\Z_2$-orbifold and then a kernel of a screening with respect to an intertwiner in the twisted sector. In this case, one would expect nontrivial extensions between local modules and this new $g$-twisted simple modules. 
\end{example}

In Section \ref{sec_NicholsExt} we relate further properties of the category $\BB^{\split}$ to properties of the coalgebra $\Nichols$. We discuss a categorical version of Hochschild cohomology related to $\Ext^1$ in $\Mod(\Nichols)$, in complete analogy to the classical versions \cite{Zimm14}, and a dual version related to $\Ext^1$ in $\Comod(\Nichols)$. In particular, it is not difficult to see that the extensions between trivial $\Nichols$-comodules are in bijection with primitive elements in $\Nichols$:

\begin{introLemma}[\ref{lm_CoExt}]
 The vector space $\Ext^{1}_{\Comod(\Nichols),\,\CC\,\split}(\1_\eta,M_\eta)$ of $\CC$-split extensions between trivial $\Nichols$-comodules $\1_\eta,M_\eta$ is isomorphic to the vector space $\Hom_\CC(M,\Nichols^{\prim})$, where  we define the $\CC$-subobject $\Nichols^{\prim}$ of $\Nichols$ as the equalizer of $\Delta_{\Nichols}$ and $\id\otimes \eta+\eta \otimes \id$.
\end{introLemma}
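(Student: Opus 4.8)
The plan is to set up a linear bijection between $\CC$-split extensions of the trivial comodule $\1_\eta$ by $M_\eta$ and morphisms $M \to \Nichols^{\prim}$, by unwinding what the coaction on a length-two extension must be. First I would take a short exact sequence $0 \to M_\eta \to E \to \1_\eta \to 0$ in $\Comod(\Nichols)(\CC)$ which is split in $\CC$; fixing such a $\CC$-splitting identifies the underlying object of $E$ with $M \oplus \1$. Under this identification the $\Nichols$-coaction $\rho_E : E \to E \otimes \Nichols$ is upper-triangular (because $M_\eta$ is a subcomodule and $\1_\eta$ the quotient, both with trivial coaction): on the $M$-summand it is $m \mapsto m \otimes \eta$, on the $\1$-summand it is $1 \mapsto 1 \otimes \eta + (\text{something})$ where the ``something'' is a morphism $f : \1 \to M \otimes \Nichols$, equivalently by rigidity/closedness a morphism $\tilde f : \1 \otimes {}^\vee\!(\ \cdot\ ) \to \cdots$ — more cleanly, $f$ is an element of $\Hom_\CC(\1, M \otimes \Nichols) \cong \Hom_\CC(M^\vee, \Nichols)\cong \Hom_\CC(\text{dual data})$; it is most economical to instead phrase it as follows: a $\CC$-morphism $\1 \to M\otimes\Nichols$ is the same as a $\CC$-morphism $M^* \to \Nichols$, but to avoid introducing duals I would just carry $f$ along and observe at the end that the whole correspondence is natural in $M$, so it suffices to produce a natural iso $\Hom_\CC(\1, M\otimes \Nichols^{\prim}) \cong \Hom_\CC(\1, M \otimes \Nichols)$-with-the-primitivity-constraint, which is literally the defining equalizer property of $\Nichols^{\prim}$ tensored with $M$.

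Next I would impose the comodule axioms on $\rho_E$. Counitality $(\id \otimes \varepsilon)\rho_E = \id$ forces $(\id\otimes\varepsilon)f = 0$ on the $\1$-component (using $\varepsilon\eta = \id_\1$), i.e. $f$ lands in $M \otimes \ker\varepsilon$. Coassociativity $(\rho_E \otimes \id)\rho_E = (\id \otimes \Delta_\Nichols)\rho_E$, evaluated on the $\1$-summand and projected appropriately, gives exactly the equation $(\id_M \otimes \Delta_\Nichols) f = f \otimes \eta + (\id_M\otimes\eta)\circ(\text{coaction of }\1) = (\id_M\otimes(\id_\Nichols\otimes\eta))\circ f \;+\; (\id_M\otimes(\eta\otimes\id_\Nichols))\circ f$ — that is, $\Delta_\Nichols \circ (\text{the }\Nichols\text{-component of }f) = (\id\otimes\eta + \eta\otimes\id)\circ(\text{same})$, which is precisely the condition that $f$ factors through $M \otimes \Nichols^{\prim}$, by definition of $\Nichols^{\prim}$ as the equalizer of $\Delta_\Nichols$ and $\id\otimes\eta + \eta\otimes\id$ (and the counit condition above is automatic for primitives, since $(\id\otimes\varepsilon)\Delta = \id$ combined with primitivity gives $\varepsilon = \varepsilon + \varepsilon\circ(\text{stuff})$ forcing the $\varepsilon$-part to vanish). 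Conversely, given any $\CC$-morphism $M \to \Nichols^{\prim}$, equivalently the associated $f : \1 \to M \otimes \Nichols^{\prim} \hookrightarrow M \otimes \Nichols$, the formula above defines a coaction on $M \oplus \1$ making it a comodule extension, $\CC$-split by construction.

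Then I would check this is well-defined on equivalence classes and linear: two $\CC$-splittings of the same extension differ by a $\CC$-morphism $\1 \to M$, and changing the splitting changes $f$ by the coboundary $(\id_M \otimes \eta)\circ(\text{that morphism})$ — but such an $f$ is already of the correct primitive form and corresponds to the split extension, i.e. to $0 \in \Hom_\CC(\1, M\otimes\Nichols^{\prim})$ only if $M\otimes\Nichols^{\prim}$ has no $\1$ in it coming from... wait, more carefully: a morphism of extensions fixing the ends is an isomorphism $E \to E'$ of comodules restricting to identities, which in the $M\oplus\1$ picture is $\begin{psmallmatrix}\id & g\\ 0 & \id\end{psmallmatrix}$ for $g : \1 \to M$ in $\CC$, and conjugating the coaction by this changes $f$ by $(\id_M \otimes \eta)\circ g - $ (the trivial-coaction contributions), which is exactly the image of $g$ under the natural map $\Hom_\CC(\1,M) \to \Hom_\CC(\1, M\otimes\Nichols^{\prim})$ induced by $\eta : \1 \to \Nichols^{\prim}$ — and this map is zero since... hmm, actually $\eta$ need not be zero, so I should be careful: the point is that the extension is trivial iff $f$ is a coboundary of this form, so $\Ext^1$ is the quotient, and one then identifies this quotient with $\Hom_\CC(M, \Nichols^{\prim})$ directly by noting that the ``naive'' coboundaries are precisely $\Hom_\CC(M,\Nichols^{\prim})$ intersected with the image of the degenerate piece; cleaner is to quotient $\Nichols^{\prim}$ by the image of $\eta$ first, or — and this is the formulation I actually expect to use — to note that in the standard (classical) bialgebra case one has $\Nichols^{\prim}\cap \1\cdot k = 0$ when $\Nichols$ is the relevant connected coalgebra, but since $\eta \neq 0$ in general the honest statement is that the cobar/Hochschild differential in degree $0$ is zero so that $\Ext^1_{\CC\text{-split}}(\1_\eta,M_\eta)$ is the full space of primitive-type $1$-cocycles, matching the preceding Lemma's discussion of categorical Hochschild cohomology. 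The main obstacle, and the part I would spend the most care on, is exactly this last bookkeeping: tracking the degenerate/coboundary part of the cochain complex in the categorical (as opposed to $\Vect$) setting, and confirming that the identification of $1$-cocycles with $\Hom_\CC(M,\Nichols^{\prim})$ already quotients out all coboundaries — i.e. that there are no nonzero $\CC$-split self-extensions hiding in the image of the degree-$0$ differential — so that no extra quotient is needed and the stated isomorphism holds on the nose. Everything else is a direct, if slightly lengthy, unwinding of the comodule axioms together with the universal (equalizer) property of $\Nichols^{\prim}$, and naturality in $M$ lets me reduce the general-$\CC$ statement to the defining property of the equalizer.
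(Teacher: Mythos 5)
Your proof is correct and is in substance the same as the paper's: the paper performs the block-matrix analysis on the dual side (modules over an augmented algebra, where the off-diagonal component of the action is a derivation and equivalences of extensions correspond to inner derivations), notes that for trivial actions on both ends every inner derivation vanishes, and then obtains the comodule statement by reversing arrows. The point you circle around at the end --- whether any coboundaries survive --- is settled in one line: an equivalence of extensions $\Phi=\left(\begin{smallmatrix}\id & g\\ 0 & \id\end{smallmatrix}\right)$ with $g\in\Hom_\CC(\1,M)$ contributes the same correction term $g\otimes\eta$ to \emph{both} sides of the intertwining equation $(\Phi\otimes\id_{\Nichols})\circ\delta_E=\delta_{E'}\circ\Phi$ (once from the trivial coaction on the subobject, once from the trivial coaction on the quotient), so the two contributions cancel and $f=f'$; hence equivalent $\CC$-split extensions carry literally the same cocycle, no quotient is needed, and the isomorphism holds on the nose --- this is exactly the dual of the paper's remark that there are no nonzero inner $(\epsilon,\epsilon)$-derivations. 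One further caution: with the orientation $0\to M_\eta\to E\to\1_\eta\to0$ you chose, the invariant is a morphism $\1\to M\otimes\Nichols^{\prim}$, i.e.\ a morphism $M^*\to\Nichols^{\prim}$; the version proved in the body of the paper concerns $\Ext^1(M_\eta,\1_\eta)$, i.e.\ extensions $0\to\1_\eta\to E\to M_\eta\to0$, for which the off-diagonal component of the coaction is literally a morphism $M\to\Nichols$ and $\Hom_\CC(M,\Nichols^{\prim})$ comes out with no duals and no appeal to naturality --- and this is the orientation actually used later for $\Ext^1_{\BB}(\C_a,\1)$, so you should run your computation in that direction rather than patch the $M$ versus $M^*$ discrepancy afterwards.
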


In Section \ref{sec_NicholsArguments} we study the Hopf algebra $\Nichols$ with methods from the theory of Nichols algebras. As a technical simplification, we restrict ourselves to the case of a  $\CC=\Vect_\Gamma$ and as in \cite{AG17} we extend the group $\tilde{\Gamma}\to \Gamma$ (at the cost of modularity) in order to have a trivializable associator $\omega=1$ and thus a fibre functor to $\Vect$. Hence, $\Nichols$ can be viewed as a algebra and coalgebra in the ordinary sense and previous work on Nichols algebras applies. In Lemma \ref{lm_fibreConnected} we check that the fibre functor sends a coalgebra $\Nichols$ in $\CC$ that is connected in the categorical sense (that is: all $\Nichols$-comodules in $\CC$ are simple $\CC$-objects with trivial coaction) to a coalgebra in $\Vect$ that is connected in the ordinary sense (that is: the coradical is trivial).\footnote{We remark that this is a trick to connect to existing results and avoid more systematic work: ultimately one should develop a framework of (co)radical filtration and local algebras resp. connected coalgebras in the categorical setting, see Problem \ref{problem_connected}}

At this point we know that $\Nichols$ is a connected finite-dimensional Hopf algebra in $\CC$ with a diagonal braiding. We invoke two major results in the context of the Andruskiewitsch-Schneider program, both proven in full generality for $\CC=\Vect_\Gamma$ by I. Angiono et.al. \cites{An13, AKM15}:

\begin{itemize}
\item $\Nichols$ is graded with respect to the coradical filtration. Note that this would also follow in sufficiently unrolled situations in the sense of \cite{CLR23} from the $\Gamma$-grading. Technically, it needs to be checked that the arguments do not depend on the extension $\tilde{\Gamma}$, but this has been done in \cite{AG17}. Counterexamples at this point would be the infinite-dimensional Nichols algebra $\C[X]/X^p \otimes \C[Y]$, which can be lifted to the Hopf algebra $\C[X]$ with an additional primitive $X^p=Y$, a so-called pre-Nichols algebra.
\item $\Nichols$ is generated in coradical degree $1$, that is, there are no higher generators, which would be undetectable in $\Ext^1$. Note that in finite dimension, this argument can be replaced by a dimension matching. Counterexamples at this point would be Lusztig's infinite dimensional quantum group containing divided powers as generators of higher degree, for example, dual to the situation in the previous bullet, a generator $Y=X^{(p)}$ with 
$$\Delta(Y)=1\otimes Y+\sum_{i=1}^{p-1} \frac{X^i}{[i]_q!}\otimes \frac{X^{p-i}}{[p-i]_q!}+Y\otimes 1.$$
Accordingly, this situation should appear e.g. for the category of modules over the Virasoro algebra.
\end{itemize}

The Nichols algebra $\mathfrak{B}(X)$ of an object $X$ in $\CC$ can be defined as the unique connected coradically-graded bialgebra, whose homogeneous component in degree $1$ is $X$. 
Combined with the previous results this gives the following remarkable reconstruction result:

\begin{introTheorem}[\ref{thm_ReconstructNichols2}]
        Let $\BB$ be a finite monoidal category and $\CC$ a central full tensor subcategory, which is of the form $\CC=\Vect_\Gamma^{Q}$. Then the tensor subcategory $\BB^{\split}$ is equivalent to $\Comod(\Nichols)(\CC)$ where $\Nichols=\NicholsOf(X)$ is the Nichols algebra  of the object 
        $$X=\bigoplus_{a\in\Gamma} \Ext^1_{\BB}(\C_a,\1) \C_a.$$
\end{introTheorem}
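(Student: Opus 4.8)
The plan is to combine the two reconstruction ingredients already assembled in the excerpt: the categorical Tannaka--Krein statement (Lemma \ref{lm_splitLM}, i.e.\ \ref{lm_splitLM} in the numbering), which produces \emph{some} connected Hopf algebra $\Nichols$ in $\CC$ with $\BB^{\split}\cong\Comod(\Nichols)(\CC)$, together with the two deep Andruskiewitsch--Angiono-type results for $\CC=\Vect_\Gamma^Q$ which pin down that Hopf algebra. So the first step is: apply Lemma \ref{lm_splitLM} to get $\Nichols$ connected. Since $\CC=\Vect_\Gamma^Q$ has a (possibly twisted) associator, I would pass to the extension $\tilde\Gamma\to\Gamma$ of \cite{AG17} so that the associator trivializes and $\Nichols$ becomes an honest connected finite-dimensional Hopf algebra over $\C$ with diagonal braiding (Lemma \ref{lm_fibreConnected} guarantees categorical connectedness survives as ordinary connectedness of the coradical); I must remember to note, as the excerpt does, that the cited structural results have been checked not to depend on this extension.

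The second step is to identify $\Nichols$ with a Nichols algebra. Here I would invoke the two bulleted results: (a) a connected finite-dimensional pointed Hopf algebra over $\Vect_\Gamma$ with diagonal braiding is coradically graded \cite{An13}, so $\Nichols \cong \operatorname{gr}\Nichols$ as coalgebras (in fact as bialgebras), and (b) it is generated in coradical degree $1$ \cite{AKM15}. A connected coradically graded bialgebra generated in degree $1$ with degree-$1$ part $X$ is, by the universal characterization recalled just before the theorem, exactly the Nichols algebra $\NicholsOf(X)$. So it remains only to compute $X$, i.e.\ the degree-$1$ homogeneous component, equivalently the space of primitives of $\Nichols$.

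The third step is the $\Ext^1$ computation. By Lemma \ref{lm_CoExt} (\ref{lm_CoExt}), the space of $\CC$-split extensions $\Ext^1_{\Comod(\Nichols),\,\CC\,\split}(\1_\eta,M_\eta)\cong\Hom_\CC(M,\Nichols^{\prim})$. Under the equivalence $\BB^{\split}\cong\Comod(\Nichols)(\CC)$, the trivial comodules correspond to objects of $\CC$ and $\CC$-split extensions in $\Comod(\Nichols)(\CC)$ correspond to extensions in $\BB$ whose middle term lies in $\BB^{\split}$ — but since the sub- and quotient objects are in $\CC$, such a middle term is automatically in $\BB^{\split}$, so these are just all extensions in $\BB$. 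Hence for each $a\in\Gamma$, $\Hom_\CC(\C_a,\Nichols^{\prim})\cong\Ext^1_\BB(\C_a,\1)$, and since $\CC=\Vect_\Gamma^Q$ is semisimple with simple objects $\C_a$, the primitives $\Nichols^{\prim}=\bigoplus_{a}\Ext^1_{\BB}(\C_a,\1)\,\C_a=X$ as claimed. Finally, because $\Nichols$ is coradically graded and generated in degree $1$, its degree-$1$ part equals its space of primitives, so $\Nichols=\NicholsOf(X)$, which gives $\BB^{\split}\cong\Comod(\NicholsOf(X))(\CC)$.

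The step I expect to be the main obstacle — or at least the one requiring the most care — is the second one: transporting the cited structural theorems (coradical gradedness and generation in degree $1$) into the present categorical/extended-group setting and verifying that ``connected finite-dimensional Hopf algebra with diagonal braiding over $\Vect_{\tilde\Gamma}$'' really falls within their hypotheses, including that nothing breaks when descending back along $\tilde\Gamma\to\Gamma$; the excerpt flags exactly this (\cite{AG17} handles the independence of the extension) and warns about the pre-Nichols and divided-power counterexamples that show both bullets are genuinely needed. By contrast the $\Ext^1$ bookkeeping, while it must be done carefully (matching $\CC$-split extensions in $\Comod(\Nichols)(\CC)$ with extensions in $\BB$, and primitives with degree-$1$ part), is essentially formal given Lemmas \ref{lm_splitLM} and \ref{lm_CoExt}.
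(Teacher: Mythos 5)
Your proposal is correct and follows essentially the same route as the paper: Tannaka--Krein reconstruction of a connected Hopf algebra via Lemma \ref{lm_splitLM}, transport to $\Vect$ along the extension $\tilde\Gamma\to\Gamma$ of \cite{AG17} using Lemma \ref{lm_fibreConnected}, the two Angiono-type structural results to force $\Nichols\cong\NicholsOf(P(\Nichols))$, and Lemma \ref{lm_CoExt} to identify the primitives with $\bigoplus_a\Ext^1_\BB(\C_a,\1)\,\C_a$ (where $\CC$-splitness is automatic by semisimplicity of $\CC$). One small bookkeeping point: you have the two citations swapped --- the no-nontrivial-liftings (coradical gradedness) statement is \cite{AKM15} Theorem 6.4, while generation in degree one is \cite{An13} Theorem 4.15.
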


Of course, this role of the Nichols algebra is very similar to the role it plays in the classification of pointed Hopf algebras \cite{AS10}: By definition, the Hopf algebra contains a lifting of the Nichols algebra $\NicholsOf(X)$ of primitive elements. The two results quoted above show that the Nichols algebra itself has no nontrivial liftings and that no other generators are present. Hence the only thing left is to compute the nontrivial liftings between $\NicholsOf(X)$ and $\C[\Gamma]$, but in our context the splitting condition rules out such liftings.

The striking properties of Nichols algebras is that the finite-dimensional Nichols algebras can be classified in terms of root systems \cites{Heck06, Heck09, AHS10, CH15}. Hence there is an explicit list of possible scenarios, containing quantum groups, quantum supergroups and exceptional cases, see  \cites{Heck06, AA17, FL22}.

\bigskip

In Section \ref{sec_proof1} we turn the results so far into a  conditional proof of the Kazhdan Lusztig correspondence. The idea is to produce elements in $\Ext^1(\C_a,1)$ for twisted $\V$-modules over the kernel of screenings $\W$ from the knowledge of $\Ext^1(\C_a,1)$ for twisted $\V$-modules over the kernel of a  single screening. This is a more restrictive condition on a module, and hence gives a full tensor subcategory, see Lemma \ref{lm_produceExt}. In turn, these $\Ext^1(\C_a,1)$ over the kernel of a single screening can be deduced from the the known case $\sl_2$.

\begin{theorem}
Let $\Lambda$ be an even integral lattice and $\alpha_1,\cdots,\alpha_n\in\Lambda$ elements, such that the cosets in $\Lambda^*/\Lambda$ are pairwise distinct and nonzero, and such that the inner product $(\alpha_i,\alpha_i)=2/p_i$ for integers $p_i>1$. Let $\mathcal{V}_{\Lambda}$ be the lattice vertex algebra, with a conformal structure such that the conformal dimension $h(\alpha_i)=1$ and let $\mathcal{W}$ the kernel of the screening operators $\zem_1,\ldots,\zem_n$ associated to $\alpha_1,\cdots,\alpha_n$. Assume that the category of representations of $\mathcal{W}$ becomes a braided monoidal category with respect to the HLZ-construction. Then the category $\BB=\RepVOA_\mathcal{V}(\mathcal{W})$ of twisted $\mathcal{V}$-modules over $\mathcal{W}$ contains as a full subcategory the representations of the Nichols algebra $\NicholsOf(q)$ associated to the diagonal braiding $q_{ij}=e^{\pi\i (\alpha_i,\alpha_j)}$ in $\Vect_{\Lambda^*/\Lambda}$.
\end{theorem}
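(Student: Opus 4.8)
The plan is to reconstruct $\BB$ from a single Nichols algebra via Theorem~\ref{thm_ReconstructNichols2} and then to match that Nichols algebra with $\NicholsOf(q)$, the vertex-algebra input being reduced to the rank-one $\sl_2$ case.

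\emph{Step 1: the categorical data and the reconstruction.} First I would set $A=\mathcal{V}_\Lambda$, viewed as a commutative algebra in $\RepVOA(\mathcal{W})$ via the conformal embedding $\mathcal{W}\subset\mathcal{V}_\Lambda$; then $\BB=\RepVOA_\mathcal{V}(\mathcal{W})$ is the category of all $A$-modules, the local $A$-modules form the full central subcategory $\CC=\RepVOA(\mathcal{V}_\Lambda)\cong\Vect_{\Lambda^*/\Lambda}^{Q}$ with $Q$ the discriminant form, and on the simple objects $\C_a$ ($a\in\Gamma:=\Lambda^*/\Lambda$) the braiding is the bicharacter $c_{\C_a,\C_b}=e^{\pi\i B(a,b)}$, so that $c_{\C_{\pm[\alpha_i]},\C_{\pm[\alpha_j]}}=e^{\pi\i(\alpha_i,\alpha_j)}=q_{ij}$. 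Using that $\BB$ is finite ($C_2$-cofiniteness of $\mathcal{W}$, implicit in the HLZ-applicability assumption) and that $\CC$ is a central full tensor subcategory of the form $\Vect_\Gamma^Q$, Theorem~\ref{thm_ReconstructNichols2} gives an equivalence of tensor categories $\BB^{\split}\cong\Comod(\Nichols)(\CC)$ with $\Nichols=\NicholsOf(X)$ and $X=\bigoplus_{a\in\Gamma}\Ext^1_{\BB}(\C_a,\1)\,\C_a$.

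\emph{Step 2: reduction to an $\Ext^1$ non-vanishing.} Since $\Nichols$ is a finite-dimensional Hopf algebra in the rigid category $\CC$, it has a dual, and the dual of a Nichols algebra is again one, so $\Comod(\Nichols)(\CC)\cong\Mod(\NicholsOf(X^\vee))(\CC)$ where in $X^\vee$ the object $\C_b$ appears with multiplicity $\dim\Ext^1_{\BB}(\C_{-b},\1)$, again with diagonal braiding of matrix $q_{ij}$ on the pieces $\C_{\pm[\alpha_i]}$. I claim it now suffices to show $\Ext^1_{\BB}(\C_{-[\alpha_i]},\1)\neq0$ for every $i$: as the cosets $[\alpha_i]$ are pairwise distinct, this makes $Z:=\bigoplus_i\C_{[\alpha_i]}$ — which defines $\NicholsOf(q)=\NicholsOf(Z)$ — a direct summand of $X^\vee$ in $\CC$, so the projection $X^\vee\twoheadrightarrow Z$ induces a Hopf algebra surjection $\NicholsOf(X^\vee)\twoheadrightarrow\NicholsOf(q)$, and restriction of modules along it is a fully faithful monoidal functor; composing with the equivalences above and with the full tensor embedding $\BB^{\split}\hookrightarrow\BB$ yields the asserted full monoidal embedding $\Mod(\NicholsOf(q))(\CC)\hookrightarrow\BB$. (Which of $\pm[\alpha_i]$ occurs is immaterial, since $a\mapsto-a$ is a braided autoequivalence of $\Vect_\Gamma^Q$; finite-dimensionality of $\NicholsOf(q)$ also drops out, as it becomes a quotient of the finite-dimensional $\NicholsOf(X^\vee)$.)

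\emph{Step 3: producing the extensions, and the main obstacle.} It remains to exhibit, for each $i$, a nonsplit extension $0\to\1\to E_i\to\C_{-[\alpha_i]}\to0$ in $\BB$; this is where the vertex-algebra content sits. Following Lemma~\ref{lm_produceExt}, I would use that invariance under the single screening $\zem_i$ is more restrictive than invariance under all of $\zem_1,\dots,\zem_n$: with $\mathcal{W}=\bigcap_j\ker\zem_j\subseteq\mathcal{W}_i:=\ker\zem_i$, restriction of the $A$-action along $\mathcal{W}\subset\mathcal{W}_i$ is a functor $\RepVOA_\mathcal{V}(\mathcal{W}_i)\to\BB$ that is the identity on morphism spaces (morphisms being $\mathcal{V}_\Lambda$-intertwiners on both sides), hence fully faithful and exact, hence injective on $\Ext^1$. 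So it is enough to produce the extension over the single kernel $\mathcal{W}_i$. Because $(\alpha_i,\alpha_i)=2/p_i$ and $h(\alpha_i)=1$, after passing to the rank-one sublattice — legitimate by the simple-current / fixpoint-free Cartan-extension arguments of Section~\ref{sec_simpleCurrent} — this is exactly the $(1,p_i)$ singlet/triplet $\sl_2$ configuration, where the sought module is the logarithmic module obtained by the $\Delta$-deformation of Problem~\ref{prob_bigPicture}: its associated graded is $\C_{-[\alpha_i]}\oplus\1$ with nonzero connecting morphism essentially $\zem_i$ itself, so it is single-valued but nonsplit over $\ker\zem_i$, and non-vanishing of this $\Ext^1$ is the content of the known $\sl_2$ computations \cites{FGST05,FT10,AM14,Len21,Sug21,CLR23}. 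I expect the hard part to be precisely this last input — turning the analytic logarithmic deformation into the algebraic statement that $E_i$ is a genuine, non-split object of $\BB$ with the correct $\Lambda^*/\Lambda$-grading, and checking that Lemma~\ref{lm_produceExt} transports it faithfully up from $\mathcal{W}_i$ to $\mathcal{W}$; granted that, the remainder is bookkeeping with the reconstruction theorem.
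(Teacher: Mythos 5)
Your proposal follows the paper's own proof: the paper likewise reduces to producing nonzero classes in $\Ext^1(\C_{[\alpha_i]},\1)$ over the single-screening kernel $\ker\zem_i$ (Lemma \ref{lm_produceExt}), obtains them from the orthogonal rank-one sublattice via the known $\sl_2$ case together with the simple-current induction of Lemma \ref{lm_inductionIndecomposable}, and converts them into primitive elements of $\Nichols$ and hence an embedding $\NicholsOf(q)\hookrightarrow\Nichols$ (Lemma \ref{lm_CoExt}, Corollary \ref{lm_myNoLiftingInclusion}). Your only deviation is cosmetic: you phrase the final step with modules and a Nichols-algebra surjection $\NicholsOf(X^\vee)\twoheadrightarrow\NicholsOf(q)$ rather than with comodules and the Hopf subalgebra $\NicholsOf(q)\hookrightarrow\Nichols$, which yields the same full subcategory.
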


We could now attempt to conclude equivalence of these categories by comparing dimensions. Combined with the Schauenburg functor, the overall result is then as follows:   

\begin{introTheorem}[\ref{thm_KLviaFP}]
Let $\W$ by a kernel of screening operators in a lattice vertex algebra $\V_\Lambda$. Assume that $\RepVOA(\W)$ is a braided tensor category. Assume the Frobenius-Perron dimension of $A=\V_\Lambda$ over $\W$ is equal to the dimension of the diagonal Nichols algebra $\NicholsOf(q)$. Then there is an equivalence of tensor categories resp. braided tensor categories 
\begin{align*}
\RepVOA_\W(\V)&= \Mod(\NicholsOf(q))(\CC) \\
\RepVOA(\W)&=\cZ_\CC(\Mod(\NicholsOf(q))(\CC)).
\end{align*}
The last category should be considered a generalized quantum group.
\end{introTheorem}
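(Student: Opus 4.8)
The plan is to feed the preceding theorem on kernels of screenings --- which exhibits $\Mod(\NicholsOf(q))(\CC)$ as a full tensor subcategory of $\BB:=\RepVOA_\W(\V)$ --- into the reconstruction Theorem~\ref{thm_ReconstructNichols2}, and to pin the two together by a Frobenius-Perron dimension squeeze that forces this subcategory to be all of $\BB$; the braided statement then follows from the Schauenburg-type reconstruction of Section~\ref{sec_reconstruction}.

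First I would set up the data of Section~\ref{sec_comm}: the conformal inclusion $\W\subset\V_\Lambda$ realizes $A=\V_\Lambda$ as a connected commutative (indeed \'etale) algebra in $\UU:=\RepVOA(\W)$, with $\UU_A=\BB$ and $\UU_A^{\loc}=\CC=\RepVOA(\V_\Lambda)=\Vect_\Gamma^{Q}$ for $\Gamma=\Lambda^*/\Lambda$ with its discriminant form $Q$, so that $\CC$ is a central full tensor subcategory of $\BB$; by the rigidity hypothesis on $\RepVOA(\W)$ and the criteria of Section~\ref{sec_commAlgebras}, $\BB$ is a finite tensor category (cf.\ Remark~\ref{problem_rigidity}). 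Theorem~\ref{thm_ReconstructNichols2} then gives $\BB^{\split}\cong\Comod(\Nichols)(\CC)$ with $\Nichols=\NicholsOf(X)$ the Nichols algebra of $X=\bigoplus_{a\in\Gamma}\Ext^1_\BB(\C_a,\1)\,\C_a$; since $\CC$ is semisimple every such extension in $\BB$ is automatically $\CC$-split, and since $\CC$ is pointed $\Nichols$ is a Nichols algebra of diagonal type whose braiding matrix is read off from $Q$.

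For the squeeze I would argue as follows. The embedding $\Mod(\NicholsOf(q))(\CC)\hookrightarrow\BB$ of the preceding theorem lands inside $\BB^{\split}$: the augmentation ideal of the finite-dimensional Hopf algebra $\NicholsOf(q)$ is nilpotent, hence every simple $\NicholsOf(q)$-module in $\CC$ is trivial and all objects of $\Mod(\NicholsOf(q))(\CC)$ are $\CC$-filtered. Now invoke the Frobenius-Perron facts of Section~\ref{sec_FP}: in $\Vect_\Gamma^{Q}$ every simple object has dimension $1$, so $\FPdim\bigl(\Mod(H)(\CC)\bigr)=\FPdim\bigl(\Comod(H)(\CC)\bigr)=|\Gamma|\dim H$ for a finite-dimensional Hopf algebra $H$; moreover $\FPdim(\UU_A)=\FPdim(\UU)/\FPdim_\UU(A)$, and the bound $\FPdim(\UU_A^{\loc})\ge\FPdim(\UU)/\FPdim_\UU(A)^2$ holds even though $\UU$ is a priori only braided. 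Combining these with the hypothesis $\FPdim_\UU(A)=\dim\NicholsOf(q)$, which yields $\FPdim(\BB)\le|\Gamma|\dim\NicholsOf(q)$, and with $\BB^{\split}\cong\Comod(\NicholsOf(X))(\CC)$, one obtains
$$|\Gamma|\dim\NicholsOf(q)\ \le\ \FPdim(\BB^{\split})=|\Gamma|\dim\NicholsOf(X)\ \le\ \FPdim(\BB)\ \le\ |\Gamma|\dim\NicholsOf(q),$$
where the first inequality is $\Mod(\NicholsOf(q))(\CC)\subseteq\BB^{\split}$ and the third is $\BB^{\split}\subseteq\BB$. All four quantities therefore coincide: $\BB^{\split}=\BB$ and $\Mod(\NicholsOf(q))(\CC)=\BB^{\split}$ (a full tensor subcategory of equal Frobenius-Perron dimension is the whole category), so $\RepVOA_\W(\V)=\Mod(\NicholsOf(q))(\CC)$ as tensor categories; this simultaneously identifies $X$ with $\bigoplus_i\C_{\alpha_i}$, carrying the diagonal braiding $q_{ij}=e^{\pi\i(\alpha_i,\alpha_j)}$, up to the transpose/inverse inherent in the $\Comod$/$\Mod$ duality.

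For the braided statement I would invoke Section~\ref{sec_reconstruction}: a braided tensor category $\UU$ with a connected commutative algebra $A$ whose induction functor is dominant --- as for a conformal extension --- is recovered via the Schauenburg functor as the relative Drinfeld center $\UU\cong\cZ_\CC(\UU_A)$ relative to $\CC=\UU_A^{\loc}$; substituting the first equivalence gives $\RepVOA(\W)=\UU=\cZ_\CC(\Mod(\NicholsOf(q))(\CC))$ as braided tensor categories, and nondegeneracy of $\RepVOA(\W)$ is recovered a posteriori (equality throughout the Section~\ref{sec_FP} bound forces $\UU$ nondegenerate; alternatively, the relative center over the modular $\Vect_\Gamma^{Q}$ is nondegenerate). \textbf{The hard part} is not the categorical algebra, which is a clean two-sided estimate, but rather (i) translating the vertex-algebraic input ``$\FPdim$ of $\V_\Lambda$ over $\W$'' into the categorical number $\FPdim_\UU(\V_\Lambda)$ and pushing it through the dimension formulas of Section~\ref{sec_FP} while assuming only that $\UU$ is braided --- so that only the degenerate-braiding \emph{bounds} for $\UU_A^{\loc}$ are at our disposal, not the modular equality --- and (ii) the bookkeeping between the $\Comod$-reconstruction of Theorem~\ref{thm_ReconstructNichols2} and the $\Mod$-presentation of the target, i.e.\ keeping the duals and the transpose of the diagonal braiding straight. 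Both are routine, but the structure is rigid: it closes precisely because the vertex-side dimension is assumed known \emph{exactly} --- the conditional character emphasized in the abstract.
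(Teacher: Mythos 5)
Your proposal is correct and follows essentially the same route as the paper: the paper's proof is exactly the two-sided Frobenius--Perron squeeze (Lemma \ref{lm_matchBB}, built from Corollary \ref{cor_FP_CA} and Lemma \ref{cor_FP_CAloc}) applied to the full subcategory $\Mod(\NicholsOf(q))(\CC)\subset\UU_A$ produced by Corollary \ref{cor_KernelOfScreeing}, which forces $\BB=\BB^{\split}=\Mod(\NicholsOf(q))(\CC)$ together with nondegeneracy of the braiding on $\UU$, followed by the Schauenburg equivalence of Theorem \ref{thm_Schauenburg}. Your explicit detour through $\BB^{\split}\cong\Comod(\NicholsOf(X))(\CC)$ and the identification of $X$ is just how the paper packages the same estimate via Theorem \ref{thm_ReconstructNichols2} and Lemma \ref{lm_matchBB}.
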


In Section \ref{sec_CharacterFormula} we demonstrate one of the ideas listed in Remark \ref{rem_FPproof_whatnow} to make this result effective, namely by an analytic computation of the Frobenius-Perron dimension:  For the Feigin-Tipunin algebra $\mathcal{W}_p(\g)$ with $\g$ simply-laced, the $q$-characters and their asymptotics are known \cites{FT10, BM17, Sug21} and they indeed match the dimension of the respective Nichols algebra. Hence we get the following conditional proof of the logarithmic Kazhdan-Lusztig conjecture:

\begin{corollary}
Assume that the Feigin-Tipunin algebra $\mathcal{W}_p(\g)$ is $C_2$-cofinite and the quantum dimension of $A$ (in the vertex algebra sense) coincides with the Frobenius Perron dimension of $A$ (in the categorical sense). Then the category of representations of $\mathcal{W}_p(\g)$ is equivalent, as braided monoidal category, to the category of representation of the quasi-Hopf version of the small quantum group $\tilde{u}_q(\g)$ with $q=e^{\frac{2\pi\i}{2p}}$ and coradical $\Lambda^*/\Lambda$, see Section \ref{sec_generalizedQuantumGroup}.
\end{corollary}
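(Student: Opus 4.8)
The plan is to verify, one by one, the hypotheses of Theorem~\ref{thm_KLviaFP} for the Feigin--Tipunin algebra and then to unwind the relative Drinfeld center in its conclusion into the asserted quantum group. First I would invoke the free-field realization of $\W_p(\g)$: for $\g$ simply laced of rank $n$ it is the common kernel of the $n$ short screening operators $\zem_1,\dots,\zem_n$ inside a lattice vertex algebra $\V_\Lambda$, with screening momenta $\alpha_1,\dots,\alpha_n$ (rescaled simple roots) satisfying $(\alpha_i,\alpha_i)=2/p$, with pairwise distinct nonzero cosets in $\Lambda^*/\Lambda$, and with the conformal vector chosen so that $h(\alpha_i)=1$; hence $(\Lambda,\alpha_1,\dots,\alpha_n)$ (all $p_i=p$) meets the hypotheses of the theorem stated immediately before Theorem~\ref{thm_KLviaFP}. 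The $C_2$-cofiniteness assumption supplies, through \cite{HLZ06}, the finiteness of $\RepVOA(\W_p(\g))$ together with the HLZ tensor product and braiding; the remaining structural point, rigidity — implicit in our use of the phrase ``braided tensor category'' — is the one ingredient not automatic from $C_2$-cofiniteness and must be imported from the recent vertex-algebraic literature (where it is available for these algebras) or absorbed into the standing assumptions, see Remark~\ref{problem_rigidity}. Granting that $\RepVOA(\W_p(\g))$ is a finite braided rigid monoidal category, that preceding theorem shows $\BB=\RepVOA_{\V}(\W_p(\g))$ contains, as a full tensor subcategory, $\Mod(\NicholsOf(q))(\CC)$ for the diagonal Nichols algebra with $q_{ij}=e^{\pi\i(\alpha_i,\alpha_j)}$ over $\CC=\Vect_{\Lambda^*/\Lambda}^Q$, $Q$ the discriminant form of $\Lambda$.

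Second, I would reduce the Frobenius--Perron bookkeeping demanded by Theorem~\ref{thm_KLviaFP} to the second hypothesis of the corollary. That theorem requires $\FPdim_{\RepVOA(\W_p(\g))}(A)=\dim\NicholsOf(q)$ for $A=\V_\Lambda$. On the vertex side the relevant input is the analytic computation of Section~\ref{sec_CharacterFormula} (one of the options in Remark~\ref{rem_FPproof_whatnow}): the $q$-characters of the simple $\W_p(\g)$-modules and their modular asymptotics are known by \cite{FT10, BM17, Sug21}, and from them one reads off that the vertex-algebraic quantum dimension of $A$ over $\W_p(\g)$ equals the Frobenius--Perron dimension of $\NicholsOf(q)$ (of dimension $p^{|\Delta_+|}$, where $\Delta_+$ denotes the positive roots of $\g$; this is $\dim u_q(\g)^{>0}$). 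Feeding in the standing assumption that this vertex-algebraic quantum dimension coincides with the categorical $\FPdim$ of $A$, we obtain precisely $\FPdim_{\RepVOA(\W_p(\g))}(A)=\dim\NicholsOf(q)$.

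With all hypotheses in hand, Theorem~\ref{thm_KLviaFP} yields braided tensor equivalences
\[ \RepVOA_{\W_p(\g)}(\V_\Lambda)\;\simeq\;\Mod(\NicholsOf(q))(\CC), \qquad \RepVOA(\W_p(\g))\;\simeq\;\cZ_\CC\big(\Mod(\NicholsOf(q))(\CC)\big). \]
It then remains to recognise the right-hand side. The braiding matrix $q_{ij}=e^{\pi\i(\alpha_i,\alpha_j)}$, with $q_{ii}=e^{2\pi\i/p}$ and $q_{ij}q_{ji}=e^{-2\pi\i/p}$ on adjacent nodes, is exactly the diagonal braiding of the positive (Nichols) part of the small quantum group $u_q(\g)$ at $q=e^{2\pi\i/2p}$ (so $\ord q=2p$, $\ord q^2=p$); hence $\Mod(\NicholsOf(q))(\CC)$ is the representation category of the Borel part of the generalized small quantum group with Cartan part $\Vect_{\Lambda^*/\Lambda}$, and by the first of the two fundamental example classes of the introduction — made precise in Section~\ref{sec_generalizedQuantumGroup} — its relative Drinfeld center over $\CC$ is the representation category of the (quasi-)Hopf algebra $\tilde u_q(\g)$ with coradical $\Lambda^*/\Lambda$. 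Since $\ell=2p$ is even, the braided category $\Vect_{\Lambda^*/\Lambda}^Q$ carries a necessarily nontrivial associator, which is exactly why $\tilde u_q(\g)$ is genuinely quasi-Hopf rather than Hopf. This is the claimed braided monoidal equivalence.

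The main obstacle is concentrated entirely in the two conditions we are forced to impose; everything else is already done. The first soft point is rigidity of $\RepVOA(\W_p(\g))$: $C_2$-cofiniteness gives the monoidal and braided structure and finiteness, but rigidity in this non-semisimple logarithmic setting is established only in special cases and otherwise has to be assumed (Remark~\ref{problem_rigidity}). The second, and deeper, is the identification of the vertex-algebraic quantum dimension of $A$ with its categorical Frobenius--Perron dimension: in the absence of semisimplicity this is a Verlinde-type statement relating modular data to fusion, of a genuinely analytic nature unlike the algebraic reconstruction used elsewhere in the paper, and it is precisely what we take for granted; even spelling out the numerical match carefully requires tracking the normalization of the asymptotic characters of \cite{BM17} and the Cartan contribution to $\dim\NicholsOf(q)$. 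By contrast the screening realization, the Nichols-algebraic reconstruction of $\BB^{\split}$ (Theorem~\ref{thm_ReconstructNichols2}), and the Schauenburg-functor identification of $\UU$ with a relative center are already in place, so no further work is needed there.
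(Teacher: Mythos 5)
Your proposal is correct and follows essentially the same route as the paper: the corollary is the introduction's restatement of Theorem \ref{thm_KLviaQD}, whose proof is exactly to feed the character-asymptotic computation of \cite{BM17} (giving quantum dimension $p^{|\Phi^+(\g)|}$ for $A=\V_\Lambda$, matching $\dim\NicholsOf(q)=\dim u_q(\g)^+$) together with the assumed identification of quantum dimension with Frobenius--Perron dimension into Theorem \ref{thm_KLviaFP}, and then to recognise the relative Drinfeld center as $\RepVOA(\tilde u_q(\g))$ via Section \ref{sec_generalizedQuantumGroup}. The only cosmetic difference is that the paper additionally records the hypothesis $p>h^\vee-1$ (needed for the simplicity and vanishing results of \cite{Sug21, Sug23} underlying the character formula), which you, like the introduction's statement, leave implicit.
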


\begin{problem}\label{problem_rigidity}
We have assumed rigidity of $\RepVOA(\W)$ and we discuss its role and how it could be avoided. 

\begin{itemize}
    \item The reconstruction of the Nichols algebra \cite{LM24} in Section \ref{sec_splitting} and the Nichols algebra arguments in Section \ref{sec_NicholsArguments} do not essentially use rigidity, in particular all the arguments can be done for bialgebras instead of Hopf algebras, and moreover by \cite{Mon93} any connected bialgebra is in fact a Hopf algebra (this is explicitly clear for the Nichols algebra). Note that this matches the nice and  more general result in \cite{CMSY24}~Theorem~3.14.
    \item The final argument uses Frobenius-Perron dimensions, which depends in its present form on rigidity. It seems reasonable that there is a submultiplicative Frobenius-Perron dimension for Grothendieck-Verdier categories \cites{BD13, FSSW24}, having merely a right exact tensor product, which is the weaker duality we always have in vertex operator algebras by the contragradient module \cite{ALSW21}. Note that the technical notion of transitivity in \cite{EGNO15} Section 4.5 can be proven in such a setting. A good candidate would be the $C_1$-dimension, see Remark \ref{rem_FPproof_whatnow}.
    \item The Schauenburg functor being an equivalence in Theorem \ref{thm_Schauenburg} requires rigidity, while in \cite{CMSY24} this can be concluded if $\BB=\BB^\split$.
\end{itemize}
Finiteness is used for Frobenius-Perron dimensions, and it is also used for the existence of a right adjoint (for a right-exact functor) and of a coend in \cite{LM24}.

\end{problem}

Acknowledgement: Many thanks to Thomas Creutzig for valuable insights and remarks and for hospitality at University of Alberta and Erlangen.  
The article was finished during a stay at RIMS Kyoto, thanks to T. Arakawa for hospitality and helpful remarks. Many thanks for remarks also to  Jinwei Yang, Robert McRae, Shegonori Nakatsuka, Shoma Sugimoto and Christoph Schweigert.


\newcommand{\one}{1}
\newcommand{\cat}{\mathcal{D}}
\newcommand{\catA}{\mathcal{D}_A}
\newcommand{\catAloc}{\mathcal{D}_A^\loc}

\section{Categorical preliminaries}\label{sec_comm}

\subsection{Braided monoidal categories}

Let $\cat$ be a monoidal category. We denote the associativity isomorphism by $a_{X,Y,Z}:(X\otimes Y)\otimes Z\to X\otimes (Y\otimes Z)$ and we omit it from formulas. If $\cat$ is a braided monoidal category, we denote the braiding isomorphism by $c_{X,Y}:X\otimes Y\to Y\otimes X$. We reserve the word \emph{tensor category} for finite and rigid monoidal categories in the sense of \cite{EGNO15}.

As a first example, the following semisimple braided monoidal category is relevant to the present article:

\begin{example}[Quadratic Space]\label{ex_quadraticSpace}
Let $(\Gamma,+)$ be an abelian group. The $\C$-linear category $\Vect_\Gamma$ of $\Gamma$-graded vector spaces is semisimple and its simple objects are $1$-dimensional vector spaces $\C_a$ for any $a\in\Gamma$. This category becomes a braided monoidal category when endowed with the following coherence data

$$
\begin{tikzcd}
(\C_a\otimes \C_b)\otimes \C_c
\arrow[rr, dashed,"a_{\C_a,\C_b,\C_c}"]
\arrow[d,equal]
&&
\C_a\otimes (\C_b\otimes \C_c)
\arrow[d,equal]
\\
\C_{a+b+c}
\arrow{rr}{\omega(a,b,c)}
&&
\C_{a+b+c}
\end{tikzcd}
\begin{tikzcd}
\C_a\otimes \C_b
\arrow[rr, dashed,"c_{\C_a,\C_b}"]
\arrow[d,equal]
&&
\C_b\otimes \C_a
\arrow[d,equal]
\\
\C_{a+b}
\arrow{rr}{\sigma(a,b)}
&&
\C_{b+a}
\end{tikzcd}
$$
for  scalar function $\sigma:\Gamma\times\Gamma\to \C^\times$ and  $\omega:\Gamma\times\Gamma\times\Gamma\to \C^\times$ that fulfill  hexagon and pentagon relation. The trivial endofunctor with a nontrivial monoidal structure gives a nontrivial equivalence between the braided monoidal categories attached to different pairs $(\sigma,\omega)$.

There is a cohomology theory called \emph{abelian cohomology} \cite{MacL52}, where the compatible pairs $(\sigma,\omega)$ are  abelian $3$-cocycle, and the equivalence classes of pairs in the sense above are  abelian $3$-cohomology classes. It is known \cites{MacL52,JS93} that abelian $3$-cohomology classes are classified by quadratic forms $Q:\Gamma\to \C^ \times$, and we denote the associated bimultiplicative form by $B(a,b)=Q(a+b)Q(a)^{-1}Q(b)^{-1}$. An abelian $3$-cocycle $(\sigma,\omega)$ is related to this data by  $\sigma(a,a)=Q(a)$ and $\sigma(a,b)\sigma(b,a)=B(a,b)$ and $\omega$ can be thought to express to which extend the chosen $\sigma(a,b)$ is not a bimultiplicative form.

As a particularly relevant (infinite) example, let $\Gamma=\R^n$ with a choice of inner product and define $Q(v)=e^{\pi\i(v,v)}$, then $B(v,w)=e^{2\pi\i(v,w)}$. A choice of abelian $3$-cocycle is $\sigma(v,w)=e^{\pi\i(v,w)}$ and $\omega(u,v,w)=1$. In general, it is not always possible to choose $\omega=1$, if $\Gamma$ has $2$-torsion. 
\end{example}

\subsection{Commutative algebras}\label{sec_commAlgebras}

For the following definitions and theorems we refer the reader for example to \cite{FFR06}, to the textbook \cite{EGNO15}, to \cite{CKM24} and to our article \cite{CLR23} Section 2 and references therein.

\begin{definition}\label{def:alg}\textup{\cite{EGNO15} Definition 7.8.1}
An \emph{algebra} in a monoidal category~$\cat$ is an object $A$ in $\cat$ together with a morphism called multiplication
$\mu: A \otimes A \rightarrow A$
 and a morphism called unit
$\eta : \one \rightarrow A$,
satisfying associativity and unitality.

A \emph{commutative algebra} $A$ in a braided monoidal category $\cat$ is an algebra in~$\cat$ such that the following morphisms coincide 
$$\big(A\otimes A \stackrel{\mu}{\longrightarrow} A\big)
\;=\;
\big(A\otimes A\stackrel{c_{A,A}}{\longrightarrow} A\otimes A\stackrel{\mu}{\longrightarrow} A\big)
$$
\end{definition}

A \emph{module} $(X,\rho_X)$ over an algebra $A$ in a monoidal category  is an object $X\in\cat$ and a morphism called action $\rho_X:A\otimes X\to X$ in $\cat$, such that again the usual compatibility with  $\mu$ holds.
A morphism of $A$-modules $f:(X,\rho_X)\to (Y,\rho_Y)$ is a morphism in $\cat$, which is compatible with the action.

\begin{definition}
For $A$ an algebra in a monoidal category $\cat$, we denote by $\catA$ the category of $A$-modules in $\cat$ and by ${_A}\cat_A$ the category of $A$-bimodules.
\end{definition}

For an algebra $A$ in a monoidal category $\cat$, the category of $A$-bimodules is a monoidal category with tensor product $\otimes_A$, defined as a coequalizer as usual.  
For a commutative algebra $A$ in a braided monoidal category, an $A$-module can be turned into an $A$-bimodule, but there are choices of using over- or underbraiding. As a consequence, the situation for a general braiding is more complicated then in the classical case, where the braiding is symmetric:

\begin{definition}
For $A$ a commutative algebra in a braided monoidal category~$\cat$, we denote by~$\catAloc$ the subcategory of \emph{local} $A$-modules, which by definition means that the following morphisms coincide :
\begin{equation*}
\xymatrix{
A \otimes  X    \ar[rd]_{  \rho_X }  \ar[rr]^{ c_{X, A} \circ  c_{A, X}} && A\otimes X \ar[ld]^{\rho_X} \\
& X&  \\
} 
\end{equation*}
\end{definition}
\begin{theorem}[see e.g. \cite{CKM24} Thm. 2.53 and 2.55]
For a commutative algebra $A$ in a braided monoidal category,
the category of $A$-modules $\catA$ is a monoidal category with the tensor product $\otimes_A$. The subcategory of local modules $\catAloc$ is a braided monoidal category.
\end{theorem}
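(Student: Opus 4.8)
The plan is to treat the two assertions in turn, taking for granted the standard fact recalled above that for an \emph{arbitrary} algebra $A$ in a monoidal category the category ${}_A\cat_A$ of $A$-bimodules is monoidal under the relative tensor product $\otimes_A$, with unit $A$ viewed as a bimodule via $\mu$. I will work under the running hypothesis of this paper that $\cat$ is abelian with right-exact monoidal product, so that the coequalizers defining $\otimes_A$ exist and are preserved by $X\otimes(-)$ and $(-)\otimes X$; in the situations of interest this holds.

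For the first assertion, the key observation is that commutativity of $A$ lets one turn left modules into bimodules: given $(X,\rho_X)$, set $\rho_X^{\mathrm r}:=\rho_X\circ c_{X,A}\colon X\otimes A\to X$, and check by a diagram chase with naturality of the braiding, the hexagon axioms and commutativity that this right action is associative and commutes with $\rho_X$. (One must fix the convention of over- versus under-braiding here; the alternative choice produces the reversed braiding.) Thus $\catA$ sits inside ${}_A\cat_A$, and for $A$-modules $X,Y$ one forms
$$X\otimes A\otimes Y \;\rightrightarrows\; X\otimes Y \;\longrightarrow\; X\otimes_A Y$$
as the coequalizer of $\rho_X^{\mathrm r}\otimes\id_Y$ and $\id_X\otimes\rho_Y$. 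Because $A\otimes(-)$ preserves this coequalizer, the left $A$-action on $X$ descends to $X\otimes_A Y$, which is therefore again an object of $\catA$; unitality $A\otimes_A X\cong X$ holds because $\rho_X$ itself exhibits $X$ as the relevant coequalizer, and associativity follows by identifying $(X\otimes_A Y)\otimes_A Z$ and $X\otimes_A(Y\otimes_A Z)$ with the evident triple coequalizer of $X\otimes A\otimes Y\otimes A\otimes Z$. The pentagon and triangle axioms for $\otimes_A$ are inherited from those of $\otimes$ in $\cat$. Hence $\catA$ is monoidal.

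For the second assertion I would first verify that $\catAloc$ is closed under $\otimes_A$ and contains the unit: writing locality of $X$ as $\rho_X\circ c_{X,A}\circ c_{A,X}=\rho_X$, one sees immediately that $A$ is local (using $\mu\circ c_{A,A}=\mu$ twice), and a diagram chase with naturality of $c$ and the locality of \emph{both} $X$ and $Y$ shows that the double braiding $c_{X\otimes_A Y,\,A}\circ c_{A,\,X\otimes_A Y}$ acts trivially on $X\otimes_A Y$. Next I define the braiding on $\catAloc$ to be the morphism $c^A_{X,Y}\colon X\otimes_A Y\to Y\otimes_A X$ induced by $c_{X,Y}$ on coequalizers. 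This is the one step where locality is genuinely used: one must check that $c_{X,Y}$ sends the parallel pair defining $X\otimes_A Y$ to that defining $Y\otimes_A X$, so that it descends, and that the descended map respects the $A$-actions — both reduce, after unwinding $\rho^{\mathrm r}$ and applying naturality and the hexagons, to exactly the locality relation for $X$ and for $Y$; locality of both factors also makes the inverse braiding descend, so $c^A_{X,Y}$ is an isomorphism. The two hexagon identities for $c^A$ then follow from those of $c$ by precomposition with the coequalizer epimorphisms.

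The main obstacle is precisely this last point: proving that the ambient braiding $c_{X,Y}$ descends to a \emph{well-defined $A$-module isomorphism} $X\otimes_A Y\to Y\otimes_A X$, and seeing that this is possible exactly because $X$ and $Y$ are local. The remaining steps — the bimodule structure from commutativity, the coequalizer bookkeeping for associativity and unitality, and the transport of coherence axioms — are routine, provided one is careful that the right-exactness hypotheses on $\cat$ are enough to make $\otimes_A$ and its coherences behave; in the abelian setting of this paper this is standard.
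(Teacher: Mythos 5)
Your proposal is correct and follows essentially the standard argument that the paper simply cites (\cite{CKM24}, Thms.\ 2.53 and 2.55): induce a bimodule structure from commutativity via a fixed choice of braiding, define $\otimes_A$ by coequalizers, and use locality precisely to make the ambient braiding descend to a well-defined $A$-module isomorphism on $\otimes_A$. You correctly isolate the one non-routine point (descent of $c_{X,Y}$ and of its inverse requires locality of both factors), so there is nothing to add.
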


We have an adjoint pair of functors: Left-adjoint the right-exact induction functor $A\otimes(-):\,\UU\to \UU_A$, which is a tensor functor, and right-adjoint the left exact functor forgetting the $A$-action $\forget_A:\,\UU_A\to \UU$, which is a lax tensor functor. 
We summarize all structures in the following diagram:
$$
 \begin{tikzcd}[row sep=10ex, column sep=15ex]
   {\UU} \arrow[shift left=1]{r}{A\otimes(-)}  
   & 
   \arrow[shift left=1]{l}{\mathrm{forget}_A} 
   \BB  
   =\UU_A \\
   &\arrow[hookrightarrow, shift left=6]{u}{}
   \CC 
   =\UU_A^\loc
\end{tikzcd}
$$

We need results that ensure rigidity and finiteness of $\UU_A$:

\begin{theorem}[\cite{SY24} Theorem 5.5 and Corollary 5.14b]\label{thm_UATensorCat}
Let $A$ be an  commutative algebra in a braided tensor category $\UU$ (that is: finite and rigid). Assume $A$ is haploid, meaning  $\dim\mathrm{Hom}_\UU(1,A)=1$,  and exact, meaning $\UU_A$ is exact as a module category \cite{EO04}. Then $\UU_A$ is a tensor category and $\UU_A^\loc$ is a braided tensor category.
\end{theorem}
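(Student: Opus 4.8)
\medskip

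The plan is to verify the defining properties of a (braided) tensor category one at a time --- finiteness, a monoidal structure with biexact tensor product, and rigidity --- using throughout that rigidity of $\UU$ forces the induction functor $A\otimes(-)$ to be exact, since in a rigid category $X\otimes(-)$ has both a right adjoint $X^*\otimes(-)$ and a left adjoint ${}^*X\otimes(-)$. First I would check that $\UU_A$ is a finite abelian category: the forgetful functor $\forget_A\colon\UU_A\to\UU$ is exact, faithful and conservative, so $\UU_A$ has finite-dimensional $\Hom$-spaces and objects of finite length, and $A\otimes(-)$, being left adjoint to the exact functor $\forget_A$, preserves projectives; applying it to a projective generator $P$ of $\UU$ and using that every $A$-module $M$ is a quotient of $A\otimes M$ (via $\rho_M$) while $M$ is a quotient in $\UU$ of some $P^{\oplus n}$ shows that $A\otimes P$ is a projective generator of $\UU_A$. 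That $\UU_A$ is monoidal with unit $A$ via the relative tensor product $\otimes_A$ (the coequalizer of the two induced actions on $M\otimes N$) is classical for commutative $A$, and $\otimes_A$ is right exact in each variable because $M\otimes_A(-)$ is a quotient functor of the right-exact functor $M\otimes(-)$.

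The crux is rigidity of $(\UU_A,\otimes_A)$, and this is where both hypotheses on $A$ are used. The key input --- and the step I expect to be the main obstacle --- is that a haploid commutative \emph{exact} algebra in a braided tensor category is automatically a symmetric Frobenius algebra: the Frobenius form $\lambda\colon A\to\1$ is pinned down up to a scalar by $\dim\Hom_\UU(\1,A)=1$, and exactness of the module category $\UU_A$ is precisely what makes the pairing $A\otimes A\xrightarrow{\mu}A\xrightarrow{\lambda}\1$ nondegenerate (separability, i.e.\ the \'etale case, would be strictly stronger; handling the merely exact case is the substance of \cite{SY24}). Granting this, the dual of $M\in\UU_A$ is carried by the underlying dual $M^*$ in $\UU$ with $A$-action twisted through $\mu$ and the Frobenius data, and $\mathrm{ev}_M,\mathrm{coev}_M$ in $\UU_A$ are assembled from those of $M$ in $\UU$ together with $\mu,\eta,\lambda$, the zig--zag identities reducing to those in $\UU$ plus the symmetric Frobenius axioms. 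Once $\UU_A$ is rigid, biexactness of $\otimes_A$ follows formally (the duals supply the missing left adjoints $M\otimes_A(-)\dashv M^\vee\otimes_A(-)$), so $\UU_A$ is a finite tensor category.

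Finally, for $\UU_A^\loc$: local modules form a full subcategory closed under subquotients and finite direct sums (the monodromy $c_{M,A}\circ c_{A,M}$ restricts along subobjects and corestricts along quotients to the identity, by naturality of the braiding and exactness of $A\otimes(-)$), hence an abelian subcategory with exact inclusion; it contains the unit $A$, and the standard hexagon computation shows it is closed under $\otimes_A$, while the locality condition being stable under duality shows $M^\vee$ is local whenever $M$ is --- so $\UU_A^\loc$ is a rigid monoidal subcategory. On local modules the two a priori distinct descents of the braiding of $\UU$ along $\otimes_A$ agree, yielding a genuine braiding satisfying the hexagons. It then remains to see that $\UU_A^\loc$ is finite as an abelian category, which is established in \cite{SY24}; with that, $\UU_A^\loc$ is a braided tensor category. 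Everything here beyond the Frobenius property of $A$ and the bookkeeping in the zig--zag identities for the duals is formal.
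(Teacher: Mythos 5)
This theorem is not proved in the paper at all: it is imported verbatim from \cite{SY24} (Theorem 5.5 and Corollary 5.14b), so there is no in-paper argument to compare against; the comparison has to be with the cited source. With that caveat, the peripheral parts of your outline are fine and standard: finiteness of $\UU_A$ via the exact faithful forgetful functor and the projective generator $A\otimes P$, right exactness of $\otimes_A$, and the closure properties of local modules under subquotients, $\otimes_A$ and the descended braiding are all unproblematic bookkeeping.

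The genuine gap is exactly where you place the whole weight of the proof: the claim that a haploid commutative \emph{exact} algebra is automatically symmetric Frobenius, with the nondegeneracy of $\lambda\circ\mu$ ``coming from exactness.'' You flag this as the main obstacle but offer no argument for it, and it is not a small missing lemma --- it \emph{is} the theorem. Worse, the proposed logical order is inverted relative to how such results are actually established. The route in \cite{SY24} (and in the Etingof--Ostrik framework it rests on) is: exactness of $\UU_A$ as a module category makes the dual category $\UU^*_{\UU_A}\simeq{}_A\UU_A$ a finite \emph{multi}tensor category, hence rigid; the braiding gives a monoidal embedding ($\alpha$-induction) $\UU_A\hookrightarrow{}_A\UU_A$, and one checks its image is closed under taking duals, so $\UU_A$ is rigid; haploidness enters only to make the unit object $A$ simple (multitensor $\to$ tensor). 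The symmetric Frobenius property of $A$, when it holds, is then extracted as a \emph{consequence} of this rigidity via relative Serre functor arguments, and in general it needs additional hypotheses (such as nondegeneracy of the braiding on $\UU$) that are not assumed in the statement you are proving. So taking ``exact $\Rightarrow$ Frobenius'' as the input both begs the question and may require more than is available. A secondary instance of the same issue: your closure of $\UU_A^\loc$ under duality presupposes knowing what the dual module structure is, which again comes out of the bimodule-category argument rather than out of an explicit Frobenius form. To repair the proof you should replace the Frobenius step by the Etingof--Ostrik dual-category argument, or else supply an independent proof that haploid exact commutative algebras in this generality are symmetric Frobenius --- which I do not believe is available without first establishing rigidity.
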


In \cite{EO21} and \cite{CMSY24} Section 2 several useful conditions for an algebra to be exact are given, for example if $A$ is self-dual (Frobenius) and generalizations thereof, which would in principle be sufficient for our purposes. During the work on this article, the following conjecture in \cite{EO21} was proven in \cite{CSZ25}, using a categorical version of the Jacobson radical, which should also be useful below.

\begin{theorem}[\cite{CSZ25} Theorem 7.1]\label{thm_UAexact}
If $A$ is a direct product of simple algebras, then $A$ is exact.  
\end{theorem}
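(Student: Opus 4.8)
The overall plan is to carry over to the finite tensor category $\cat$ the classical fact that over a semiprimitive Artinian algebra every simple module splits off the free module on it, and then to read exactness of the module category $\catA$ off this; the main tool is a Jacobson radical for the algebra $A$ inside $\cat$. First I would reduce to the case of a simple algebra: writing $A=A_1\times\cdots\times A_n$ with each $A_i$ simple, the orthogonal central idempotents $\1\to A$ yield an equivalence of $\cat$-module categories $\catA\simeq\bigoplus_{i=1}^{n}\cat_{A_i}$, and a $\cat$-module category is exact if and only if each of its finitely many summands is. Then I would introduce the Jacobson radical $\rad(A)\subseteq A$ — for instance as the intersection of the maximal right ideals of $A$, equivalently the largest two-sided ideal acting by zero on every simple object of $\catA$ — and check its formal properties in $\cat$: it is a two-sided ideal, $A/\rad(A)$ is semiprimitive, and, crucially, $\rad(A)$ is nilpotent. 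Nilpotency uses finiteness of $\cat$: a fixed projective generator $P$ of $\cat$ gives an object $\Ind(P)$ (for $\Ind$ the induction, i.e.\ free-module, functor) of finite Loewy length $\ell$, hence every object of $\catA$ has Loewy length at most $\ell$ since it is a quotient of a power of $\Ind(P)$; and because multiplying an $A$-module by $\rad(A)$ lands in its radical, the iterated multiplications $\rad(A)^{\otimes m}\to A$ vanish for all large $m$. A nilpotent two-sided ideal cannot contain the image of the unit $\eta\colon\1\to A$, since the unit is its own $m$-fold product, so $\eta$ would factor through $\rad(A)^{\otimes m}\to A$ and hence through $0$, contradicting that $\eta$ is a nonzero monomorphism. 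Thus $\rad(A)\subsetneq A$, and simplicity of $A$ forces $\rad(A)=0$; the same radical calculus also produces a categorical Wedderburn decomposition, so ``$\rad(A)=0$'' and ``direct product of simple algebras'' are in fact the same hypothesis.

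Next I would reduce exactness of $\catA$ to a statement about simple modules. It suffices to show that $P\otimes S$ is projective in $\catA$ for every simple $A$-module $S$: the class of those $M\in\catA$ for which $P\otimes M$ is projective is closed under extensions, because a short exact sequence of projectives splits, and it contains every simple object, hence it is all of $\catA$, which is precisely the definition of exactness. In carrying this out I would also use that $\Ind$, being left adjoint to the exact forgetful functor $\forget\colon\catA\to\cat$, sends projectives of $\cat$ to projectives of $\catA$, and that $P\otimes\Ind(V)=\Ind(P\otimes V)$ with $P\otimes V$ projective, projectives forming a tensor ideal.

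The heart of the argument is then to prove, when $\rad(A)=0$, that every simple $S$ is a direct summand of the free module $\Ind(\forget S)$, equivalently that the counit $\Ind(\forget S)\to S$ admits an $A$-linear section. Granting this, $P\otimes S$ is a summand of $P\otimes\Ind(\forget S)=\Ind(P\otimes\forget S)$, the induction of a projective object of $\cat$ and hence itself projective, which completes the proof. I would produce the section by a Wedderburn--Malcev-type principal theorem for algebras in $\cat$: one lifts idempotents along the nilpotent Jacobson radical of the bimodule category ${}_A\cat_A$, using that $\rad(A)=0$ makes the regular module $A$ semisimple in $\catA$. A closely related route establishes along the way that $A$ is self-dual and then invokes the known implication that a Frobenius algebra is exact \cite{EO04}. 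I expect this step — running idempotent-lifting arguments entirely inside $\cat$, where there need be no fibre functor reducing matters to ordinary ring theory — to be the genuine obstacle, and it is exactly the work carried out by the categorical Jacobson-radical machinery of \cite{CSZ25}; the surrounding steps are formal.
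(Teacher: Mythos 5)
First, a point of comparison: the paper does not prove this statement. Theorem \ref{thm_UAexact} is imported verbatim from \cite{CSZ25} (as the resolution of a conjecture from \cite{EO21}), and the only information the text gives about its proof is that it uses ``a categorical version of the Jacobson radical''. So there is no internal argument to measure your proposal against; the question is whether your sketch stands on its own. Its outer layers do, and they match the advertised strategy: the reduction to a single simple factor via central idempotents; the reduction of exactness to projectivity of $P\otimes S$ for $S$ simple (using exactness of $P\otimes(-)$, closure of projectives under extensions with projective quotient, and finite composition series); the facts that $\Ind$ preserves projectives and that projectives form a tensor ideal; and the observation that a splitting $S\mid \Ind(\forget S)$ would finish the proof, since $P\otimes S$ is then a summand of $\Ind(P\otimes \forget S)$. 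Likewise the nilpotence of $\rad(A)$ via uniformly bounded Loewy lengths, and the fact that the unit cannot factor through a nilpotent ideal, are sound modulo the internal Nakayama lemma.

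The genuine gap is the step you yourself flag, and the heuristic you offer for it does not survive transport to the internal setting. Classically, ``$\rad(A)=0$ implies every simple splits off its free module'' works because a free module $A\otimes_k V$ is a direct sum of copies of the regular module, hence semisimple; internally $A\otimes X$ is emphatically not a sum of copies of $A$ (it remembers the structure of $X$), and $\catA$ is in general far from semisimple even for $A$ simple --- the paper's motivating example $\catA\simeq\Mod(u_q(\g)^{\geq 0})(\mathcal{C})$ is exactly such a case. For the same reason, the separability-idempotent mechanism that would split the counit $\Ind(\forget M)\to M$ for \emph{all} $M$ is unavailable: a separable algebra has semisimple module category, which simple algebras need not have. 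So even granting a categorical Wedderburn theorem making the regular module semisimple in $\catA$, the passage to a splitting of $\Ind(\forget S)\to S$ for simple $S$ is a non-formal claim, and it is precisely the mathematical content of \cite{CSZ25} that you are citing rather than proving. In short: your outline is a plausible reconstruction of the shape of their argument and correctly isolates where the difficulty sits, but as a proof it is circular at the decisive point.
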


The converse was already known to hold. In our application, these results are used as follows:
Suppose the unit in $\UU_A^\loc$ is known to be simple, then the algebra $A$ is simple as an algebra, and it is then also haploid because $\Hom_\UU(1,A)=\Hom_{\UU_A}(A,A)=\C$ as in the proof of \cite{CMSY24} Theorem 2.24. Then the two previous theorems combined give:

\begin{corollary}\label{cor_isTensorCat}
Suppose $\UU$ is a braided tensor category and  the unit in $\UU_A^\loc$ is simple, then $\UU_A$ is a tensor category and $\UU_A^\loc$ is a braided tensor category.
\end{corollary}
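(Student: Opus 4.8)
The plan is to assemble the two quoted theorems with a short observation that the hypothesis forces $A$ to be a simple algebra. First I would record the elementary facts that $A$, equipped with its regular action $\mu\colon A\otimes A\to A$, is the monoidal unit of $\UU_A$ and that it lies in $\UU_A^\loc$: commutativity gives $\mu\circ c_{A,A}=\mu$, hence $\mu\circ c_{A,A}\circ c_{A,A}=\mu$, which is precisely the locality condition for the regular $A$-module. The key small point is then that \emph{every} $A$-submodule $\iota\colon N\hookrightarrow A$ of $A$ in $\UU_A$ is automatically local. This follows from naturality of the braiding: $\id_A\otimes\iota$ intertwines the double braiding $c_{N,A}\circ c_{A,N}$ on $A\otimes N$ with $c_{A,A}\circ c_{A,A}$ on $A\otimes A$, so precomposing the equality $\mu\circ c_{A,A}\circ c_{A,A}=\mu$ with $\id_A\otimes\iota$ and using monicity of $\iota$ gives $\rho_N\circ c_{N,A}\circ c_{A,N}=\rho_N$. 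Consequently ``the unit of $\UU_A^\loc$ is simple'' is equivalent to ``$A$ is a simple object of $\UU_A$'', and since submodules of $A$ coincide with two-sided ideals of the commutative algebra $A$, this is in turn equivalent to ``$A$ is simple as an algebra''.

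Next I would extract exactness and haploidness. As $A$ is a simple algebra, it is a (one-factor) direct product of simple algebras, so Theorem~\ref{thm_UAexact} gives that $A$ is exact, i.e.\ $\UU_A$ is an exact module category over $\UU$. Moreover, $A$ is haploid: being a simple object of the finite $\C$-linear category $\UU_A$, Schur's lemma gives $\Hom_{\UU_A}(A,A)=\C$, and by the induction/forgetful adjunction $\Hom_\UU(\1,A)=\Hom_{\UU_A}(A\otimes\1,A)=\Hom_{\UU_A}(A,A)=\C$, so $\dim\Hom_\UU(\1,A)=1$ (this is the computation in the proof of \cite{CMSY24} Theorem~2.24). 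Finally, $A$ is thus a haploid, exact, commutative algebra in the braided tensor category $\UU$, so Theorem~\ref{thm_UATensorCat} yields directly that $\UU_A$ is a tensor category and $\UU_A^\loc$ is a braided tensor category.

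The argument is essentially a bookkeeping exercise given the inputs recalled above; the only step requiring a genuine (if short) verification is the passage from simplicity of the unit in $\UU_A^\loc$ to simplicity of $A$ as an algebra, i.e.\ the fact that no non-local ideal of $A$ can occur, handled by the naturality computation in the first paragraph. All the substantive content is deferred to Theorems~\ref{thm_UAexact} and~\ref{thm_UATensorCat}, which are invoked as black boxes.
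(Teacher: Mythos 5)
Your proof is correct and follows the same route as the paper: deduce that $A$ is a simple, hence exact (Theorem~\ref{thm_UAexact}) and haploid, commutative algebra, and then invoke Theorem~\ref{thm_UATensorCat}. The only addition is your naturality computation showing that every $A$-submodule of $A$ is automatically local, which justifies the step — stated without proof in the paper — that simplicity of the unit in $\UU_A^\loc$ already forces $A$ to be simple as an algebra; this is a worthwhile detail to make explicit.
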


\subsection{Simple current extensions}\label{sec_simpleCurrent}

\begin{definition}
A \emph{simple current} in a monoidal category $\cat$ is an invertible object $C$, that is, there exists an object $C^{-1}$ with $C\otimes C^{-1}\cong C^{-1}\otimes C=\1$. The simple currents in $\cat$ form a group. 

A \emph{simple current extension} is a commutative algebra $A$ in a braided monoidal category, which has the form $A=\bigoplus_{i\in I} C_i$ for a subgroup $I$ of simple currents, with multiplication $C_i\otimes C_j\to C_{i+j}$ nonzero for any $i,j\in I$.
\end{definition}

In particular, a simple current is rigid even if the category $\cat$ is not known to be rigid. The following nice properties of simple current extension will be crucial. We adapt the following  from \cite{CMY21} Proposition 3.2.4:

\begin{lemma}
For a simple current extension $A$ the induction functor $A\otimes(-):\,\cat\to \catA$ is exact. 
\end{lemma}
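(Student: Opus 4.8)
The plan is to show that tensoring with the algebra $A=\bigoplus_{i\in I}C_i$ is exact by checking it on each summand $C_i\otimes(-)$, and each such functor is exact because $C_i$ is invertible. First I would observe that, for any monoidal category, tensoring on the left with a fixed object $Y$ is always right exact if the tensor product is right exact in each variable (which it is, by right-exactness of $A\otimes(-)$ into $\catA$), so the only thing in question is left exactness, equivalently preservation of monomorphisms and kernels. For an invertible object $C$ with inverse $C^{-1}$, the functor $C^{-1}\otimes(-)$ is a two-sided inverse (up to natural isomorphism) of $C\otimes(-)$, so $C\otimes(-)$ is an auto-equivalence of $\cat$ and in particular exact. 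Hence each $C_i\otimes(-)$ is exact.

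Next I would assemble these into exactness of $A\otimes(-)$. Since $A=\bigoplus_{i\in I}C_i$ is a finite direct sum (here $I$ is a finite group, as $\cat$ is finite), and the tensor product is additive in each variable, we have a natural isomorphism $A\otimes(-)\cong\bigoplus_{i\in I}\bigl(C_i\otimes(-)\bigr)$ as functors $\cat\to\cat$. A finite direct sum of exact functors is exact, so the composite $\cat\to\cat$, $X\mapsto A\otimes X$, is exact. Finally, exactness of $A\otimes(-):\cat\to\catA$ follows because the forgetful functor $\forget_A:\catA\to\cat$ is exact and faithful (it is exact since kernels and cokernels of $A$-module maps are computed in $\cat$, and faithful since it is the identity on morphisms), so a sequence in $\catA$ is exact if and only if its image under $\forget_A$ is exact in $\cat$; and $\forget_A\circ(A\otimes(-))=A\otimes(-):\cat\to\cat$, which we have just shown is exact.

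The only mild subtlety — and the step I would be most careful about — is the identification $A\otimes(-)\cong\bigoplus_i C_i\otimes(-)$ together with the compatibility of the $A$-module structure with this decomposition; one should check that the induced $A$-action on $A\otimes X$ corresponds under the isomorphism to the evident action, but this is automatic from naturality of the associativity constraint and does not affect the underlying-object computation that governs exactness. The genuine content is entirely the invertibility of the $C_i$, which is exactly what makes $C_i\otimes(-)$ exact; everything else is formal. I do not anticipate a real obstacle here — this is a short argument, and the reference to \cite{CMY21} Proposition 3.2.4 indicates the same line of reasoning.
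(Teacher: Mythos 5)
Your argument is correct and is essentially the proof the paper has in mind: the paper states this lemma without proof, adapting \cite{CMY21} Proposition 3.2.4, and the content there is exactly your observation that $C_i\otimes(-)$ is an auto-equivalence for each invertible $C_i$, that $A\otimes(-)\cong\bigoplus_i C_i\otimes(-)$, and that the forgetful functor reflects exactness. The only caveat worth noting is your parenthetical that $I$ is finite: the paper's definition allows infinite groups of simple currents (e.g.\ $I=\Lambda$ inside $\Vect_{\R^n}$), but the argument survives since an arbitrary direct sum of exact functors is still exact whenever coproducts exist and are exact, as they are in the categories at hand.
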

Moreover, it is not difficult to see
\begin{lemma}\label{lm_inductionIndecomposable}
For a simple current extension $A=\bigoplus_{i\in I}C_i$ with $C_0=\1$, assume it is \emph{fix-point-free} in the sense that $C_i\otimes X\not\cong X$ for any $i\neq 0$ and any simple object $X$. Then the induction functor $A\otimes(-):\,\cat\to \catA$ sends simple modules to simple modules and indecomposable extensions of simple modules to indecomposable extensions of simple modules.
\footnote{We thank T. Creuztig for this remark. For iterated extensions, it seems fix-point-free has to be replaced by torsion-free. On the other hand, we only require later-on that the image contains non-simple indecomposables, which is probably true without such assumptions.}
\end{lemma}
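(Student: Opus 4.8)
The plan is to prove both statements with Frobenius reciprocity, exploiting that for a simple current extension $A=\bigoplus_{i\in I}C_i$ one has $\forget_A(A\otimes Y)=\bigoplus_{i\in I}C_i\otimes Y$ as an object of $\cat$, that each $C_i\otimes(-)$ is an autoequivalence of $\cat$ (with quasi-inverse $C_i^{-1}\otimes(-)$), and that the restriction $\mu|_{C_j\otimes C_i}\colon C_j\otimes C_i\to C_{j+i}$ of the multiplication is a nonzero morphism between invertible --- hence simple --- objects, so it is an isomorphism (using $\mathrm{End}(\1)=\C$).

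\emph{Simple objects.} For $X$ simple I would first observe that each $C_i\otimes X$ is simple and that fix-point-freeness forces the family $\{C_i\otimes X\}_{i\in I}$ to be pairwise non-isomorphic, since $C_i\otimes X\cong C_j\otimes X$ is equivalent to $C_{i-j}\otimes X\cong X$. Hence $\forget_A(A\otimes X)=\bigoplus_{i\in I}C_i\otimes X$ is multiplicity-free semisimple in $\cat$, so any $A$-submodule $N\subseteq A\otimes X$ satisfies $\forget_A N=\bigoplus_{i\in J}C_i\otimes X$ for a subset $J\subseteq I$. The $A$-action maps the summand $C_j\otimes(C_i\otimes X)$ isomorphically onto $C_{j+i}\otimes X$ (by the isomorphism statement about $\mu|_{C_j\otimes C_i}$ above), so stability of $N$ forces $J+I\subseteq J$; as $I$ is a subgroup this gives $J=\emptyset$ or $J=I$, i.e. $A\otimes X$ is simple.

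\emph{Indecomposable extensions.} Let $0\to X_1\to E\to X_2\to0$ be a non-split extension of simple objects. By the preceding lemma induction is exact for a simple current extension, and $\forget_A$ is always exact, so $0\to A\otimes X_1\to A\otimes E\to A\otimes X_2\to0$ is exact in $\catA$ with simple outer terms by the first part; thus $A\otimes E$ has length two in $\catA$ and is indecomposable if and only if this sequence is non-split. To check non-splitness I would upgrade the Hom-adjunction to a natural isomorphism $\Ext^1_{\catA}(A\otimes X_2,\,A\otimes X_1)\cong\Ext^1_{\cat}(X_2,\,\forget_A(A\otimes X_1))=\bigoplus_{i\in I}\Ext^1_{\cat}(X_2,C_i\otimes X_1)$ --- valid because exactness of induction makes it send projective resolutions to projective resolutions --- and verify that the class of the induced extension has $C_0$-component equal to the original (nonzero) class of $0\to X_1\to E\to X_2\to0$; this is because the unit $\eta_{X_2}\colon X_2\to A\otimes X_2$ is exactly the inclusion of the $C_0=\1$ summand. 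Hence $A\otimes E$ is a non-split length-two object, so indecomposable. (Alternatively, avoiding $\Ext$: if $A\otimes E=M\oplus N$ nontrivially, apply $\forget_A$ and use Krull--Schmidt --- the $C_i\otimes E$ are pairwise non-isomorphic indecomposables, since $C_k\otimes E\cong E$ would force $C_k\otimes X_1\cong X_1$ on socles --- to get $\forget_A M=\bigoplus_{i\in J}C_i\otimes E$ with $\emptyset\neq J\subsetneq I$, and then run the action argument of the first part for a contradiction; this route needs finiteness, whereas the $\Ext$-adjunction route does not.)

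The main point requiring care is the indecomposable case: one has to be sure that non-splitness of the original extension is genuinely detected after induction, and the cleanest way is the $\Ext$-adjunction together with the identification of its unit as the inclusion of the $\1$-isotypic summand. The only other nonroutine input is the observation that invertibility of the $C_i$ makes the multiplication components $C_j\otimes C_i\to C_{j+i}$ isomorphisms, which is what propagates an $A$-submodule across all the graded pieces.
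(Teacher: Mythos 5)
Your proof is correct. The first half is essentially identical to the paper's: pairwise non-isomorphy of the $C_i\otimes X$ from fix-point-freeness, plus the observation that the nonzero multiplication components $C_j\otimes C_i\to C_{j+i}$ propagate any submodule across all graded pieces. For the second half you take a genuinely different route. The paper argues directly that no section $s$ of $\pi\otimes\id_A$ can exist: since each summand $M\otimes C_j$ is a non-split length-two object with simple socle $X\otimes C_j$, any $\cat$-morphism from a simple summand $Y\otimes C_k$ into $M\otimes C_j$ lands in that socle, hence in $\ker(\pi\otimes\id_A)$. You instead detect non-splitness of the induced class by transporting it back along the adjunction and identifying its $C_0$-component with the original nonzero class. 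Both work; your version is more conceptual and actually pins down where the induced class lives inside $\bigoplus_{i}\Ext^1_{\cat}(X_2,C_i\otimes X_1)$. Two small remarks: (i) you only need one direction of the $\Ext^1$-adjunction (apply the exact functor $\mathrm{forget}_A$ and pull back along the unit, then project to the $C_0$-component), so the appeal to projective resolutions is dispensable — which is preferable, since a general $\cat$ need not have enough projectives, and in any case preservation of projectives comes from exactness of the \emph{right} adjoint $\mathrm{forget}_A$, not of induction; (ii) the identification of the unit $X_2\to A\otimes X_2$ with the inclusion of the $\1$-summand uses that $\Hom_\cat(\1,C_i)=0$ for $i\neq 0$, which indeed follows from fix-point-freeness applied to $X=\1$. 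Neither point is a gap.
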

\begin{proof}
Let $X\in\cat$ be simple, then $A\otimes X=\bigoplus_{i\in I} C_i\otimes X$ is a direct sum of simple objects in $\cat$, which are by assumptions mutually nonisomorphic, because $C_i\otimes X\cong C_j\otimes X$ implies $C_{i-j}\otimes X\cong X$.  Hence, any nonzero submodule of $A\otimes X$ in $\cat$ contains at least one of these summands. Because we assumed the multiplication is nonzero, in $\catA$ any simple submodule of $A\otimes X$ is already all of $A\otimes X$. This proves that induction preserves simplicity.

Let $X\stackrel{\iota}{\to} M\stackrel{\pi}{\to}  Y$ be a nonsplit extension of simple object in $\cat$. Then by exactness of $A\otimes(-)$ we have have an extension in $\catA$
$$\bigoplus_i X \otimes C_i\to \bigoplus_j M\otimes C_j\leftrightarrows \bigoplus_k Y\otimes C_k.$$
Suppose now there is a section $s$ in $\cat_A$, contrary to the assertion. We consider this as a split exact sequence in $\cat$: The summands in the second sum are nonsplit exact extensions of simple objects  $X\otimes C_j\to M\otimes C_j\to Y\otimes C_j$. Hence the space of $\cat$-morphism $s$ between any summands in the third and second sum is at most $1$-dimensional, namely $(\iota\otimes \id_{C_j})\circ f:\,M\otimes C_j\rightarrow Y\otimes C_k$ in case there is for $j,k$ an isomorphism of simple objects $f:X\otimes C_j\rightarrow Y\otimes C_k$. But the image is in the kernel of $\pi\otimes \id_A$, hence from  such $s$ we can never built a section of $\pi\otimes \id_A$.
\end{proof}

\begin{example}
We discuss simple current extensions of quadratic spaces $\Vect_\Gamma^Q$: This should probably be attributed back to \cite{DL93} in the vertex algebra context, it is well known, and the interested reader maybe also referred to our thorough discussion in \cite{GLM24} Section 4.2:
Let $I\subset \Gamma$ be an isotropic subgroup, meaning $Q(i)=1$ for all $i\in I$ and in consequence $B(i,j)=1$ for $i,j\in I$. Then we can define a commutative algebra in $\Vect_\Gamma^Q$
$$A=\C_\epsilon[I]=\bigoplus_{i\in I} \C_i$$
with multiplication $\C_i\otimes \C_j \stackrel{\epsilon(i,j)}{\mapsto} \C_{i+j}$ for a $2$-cocycle $\epsilon$ such that 
$\epsilon(i,j)\epsilon(j,i)^{-1}=\sigma(i,j)$, which exists since $\sigma(i,j)\sigma(j,i)=1$ and $\sigma(i,i)=1$. Then 
\begin{align*}
\left(\Vect_{\Gamma}^Q\right)_A
&=\Vect_{\Gamma/I}^Q\,,
\\
\left(\Vect_{\Gamma}^Q\right)_A^\loc
&=\Vect_{I^\perp/I}^{Q_I}\,,
\end{align*}
where $I^\perp$ is the set of all $v\in \Gamma$ with $B(v,i)=1$ for all $i\in I$ and $Q_I$ is the quadratic form $Q$ restricted to $I^\perp$ and factored over $I$. 

A particular example, relevant to our application, is again the infinite group $\Gamma=\R^n$ with and quadratic form $Q(v)=e^{\pi\i(v,v)}$. Then, isotropic subgroups are even integral lattice $\Lambda\subset \R^n$, and the local modules give the modular monoidal category $\Vect_{\Lambda^*/\Lambda}^{Q_I}$ related to the discriminant form of the lattice $(\Lambda^*/\Lambda,Q_I)$.
\end{example}

\subsection{Central functors and algebras}\label{sec_centralalgebra}

Recall that for a monoidal category $\BB$ the Drinfeld center $\cZ(\BB)$ is a braided monoidal category consisting of pairs of objects $X\in\BB$ and half-braidings $c_{X,-}$. We use the following suggestive language, which has become common:

\begin{definition}
A \emph{central tensor functor} $F:\UU\to \BB$ between a braided monoidal category $\UU$ and a monoidal category $\BB$ is a tensor functor together with an upgrade to a braided tensor functor $\tilde{F}:\UU\to \cZ(\BB)$ in the sense that the following diagram commutes
$$
\begin{tikzcd}
&&
\cZ(\BB)\arrow{d}{\mathrm{forget}\;c_{X,-}}
\\
\UU 
\arrow{rr}{F}
\arrow{rru}{\tilde{F}}
&&
\BB
\end{tikzcd}
$$
\end{definition}
As a special case we have the notion of a \emph{central (full) subcategory}.\\

\begin{definition} For a monoidal category $\BB$, a \emph{central commutative algebra} is a commutative algebra in the Drinfeld center $(A,c_{A,-}) \in \cZ(\BB)$. 

In a straightforward way, the constructions in Section \ref{sec_commAlgebras} carry over, that is: The category $\BB_{(A,c)}$ of representations of a central commutative algebra in $\BB$ with tensor product $\otimes_A$ is a monoidal category and the monoidal subcategory of local modules $\BB_{(A,c)}^\loc$ is a braided monoidal category.  
\end{definition}

Of course, if $\BB$ is braided, then every commutative algebra $A$ in $\BB$ becomes a central commutative algebra in the monoidal category $\BB$ by using the the braiding present in $\BB$.\\

\subsection{Frobenius-Perron dimension}\label{sec_FP}

Recall from \cite{EGNO15} Section 3.3 that for any unital transitive $\Z^+$-ring of finite rank we have a notion of the \emph{Frobenius-Perron dimension} $\FPdim(X)$ of an object, defined as the largest non-negative eigenvalue of multiplication with~$X$. Recall from \cite{EGNO15} Proposition 4.5.4 that if $\catC$ is a tensor category (that is: finite and rigid), then the Grothendieck ring $\mathrm{K}_0(\catC)$ fulfills the necessary properties. Recall from \cite{EGNO15} Definition 6.1.6 the \emph{regular object}
$$R_\catC=\sum_{i\in I} \FPdim(X_i)P_i,$$
where $X_i,P_i$ run over all simple objects $X_i$ and their projective covers $P_i$, and the \emph{global Frobenius-Perron dimension} is defined as $\FPdim(\catC):=\FPdim(R_\catC)$. We require in this article the following properties: \\

Let $F:\catC\to \catD$ be a tensor functor between tensor categories:
\begin{lemma}[\cite{EGNO15} Proposition 4.5.7]
For every object $X$ we have the equality $\FPdim(F(X))=\FPdim(X)$.
\end{lemma}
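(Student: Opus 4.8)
The statement to prove is the Frobenius--Perron dimension invariance under a tensor functor: $\FPdim(F(X)) = \FPdim(X)$ for a tensor functor $F \colon \catC \to \catD$ between tensor categories, for every object $X$.

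The plan is to exploit that $\FPdim$ is characterized abstractly as the unique character (ring homomorphism to $\R$) on the Grothendieck ring $\mathrm{K}_0(\catC)$ taking strictly positive real values on all simple objects, which exists by the general Frobenius--Perron theory for transitive unital $\Z^+$-rings of finite rank recalled above from \cite{EGNO15} Section 3.3. First I would observe that a tensor functor, being exact and monoidal, induces a unital ring homomorphism $[F] \colon \mathrm{K}_0(\catC) \to \mathrm{K}_0(\catD)$ on Grothendieck rings: exactness guarantees that $[F]$ is well-defined on classes (short exact sequences go to short exact sequences), additivity gives $\Z$-linearity, and the monoidal structure gives $[F]([X]\cdot[Y]) = [F][X]\cdot[F][Y]$ while $[F][\1_\catC] = [\1_\catD]$.

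Next I would compose with $\FPdim_\catD \colon \mathrm{K}_0(\catD) \to \R$ to obtain a ring homomorphism $\varphi := \FPdim_\catD \circ [F] \colon \mathrm{K}_0(\catC) \to \R$. The key step is to check that $\varphi$ takes strictly positive values on every simple object $X_i$ of $\catC$. Since $F$ is a tensor functor, $F(X_i) \neq 0$ (a tensor functor is in particular faithful, so it does not kill nonzero objects; alternatively, $F$ preserves the rigid dual, and $F(X_i^*) \otimes F(X_i) \twoheadrightarrow \1$ forces $F(X_i)\neq 0$). Hence $[F][X_i]$ is a nonzero element of $\mathrm{K}_0(\catD)$ that is a nonnegative combination of simple classes, so $\FPdim_\catD([F][X_i]) > 0$ because $\FPdim_\catD$ is strictly positive on simples and at least one coefficient is a positive integer. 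Therefore $\varphi$ is a character on $\mathrm{K}_0(\catC)$ that is strictly positive on all simples, and by the uniqueness part of the Frobenius--Perron theorem (the strictly-positive character on a transitive $\Z^+$-ring of finite rank is unique) we conclude $\varphi = \FPdim_\catC$. Evaluating at $[X]$ gives $\FPdim_\catD(F(X)) = \FPdim_\catC(X)$.

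The main obstacle — really the only delicate point — is verifying that $[F]$ is genuinely a well-defined ring homomorphism on Grothendieck rings, which hinges on $F$ being exact; for a tensor functor between tensor categories this exactness is part of the definition (tensor functors are exact, as they admit both adjoints by rigidity), so this is routine but worth stating. The other subtlety, that $F(X_i) \neq 0$, follows from faithfulness of a tensor functor, again built into the definition in \cite{EGNO15}. Thus no real computation is needed: the proof is a uniqueness-of-Frobenius--Perron-character argument, and indeed this is exactly the proof in \cite{EGNO15} Proposition 4.5.7 which we are citing.
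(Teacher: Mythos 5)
Your proof is correct and is essentially the argument of the cited source: the paper itself quotes \cite{EGNO15} Proposition 4.5.7 without reproving it, and that proof is exactly your uniqueness-of-the-positive-Frobenius--Perron-character argument applied to the ring homomorphism $\FPdim_\catD\circ[F]$ on $\mathrm{K}_0(\catC)$. The two delicate points you flag (exactness of $F$ making $[F]$ well defined, and faithfulness giving $F(X_i)\neq 0$) are indeed the only things to check, and both are part of the definition of a tensor functor in \cite{EGNO15}.
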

\begin{lemma}[\cite{EGNO15} Proposition 6.3.3 and 6.3.4]\label{lm_EGNOinjectivesurjective}
If $F$ is injective (that is: fully faithful), then $\FPdim(\catC)\leq \FPdim(\catD)$ and equality implies that $F$ is an equivalence of categories. 

If $F$ is surjective (that is: every simple $X\in\catD$ is a subquotient of some $F(Y)$), then $\FPdim(\catC)\geq \FPdim(\catD)$ and equality implies that $F$ is an equivalence of categories.  
\end{lemma}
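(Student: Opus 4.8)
The plan is to derive both halves of the lemma from one mechanism: a Frobenius--Perron uniqueness statement for the \emph{regular object}, pushed forward along $F$. I will treat the surjective case; the injective (full embedding) case is handled in the same spirit (it is in any event \cite{EGNO15} Proposition 6.3.4). Throughout I use the facts already recalled: a tensor functor $F$ between tensor categories is exact and faithful and admits both adjoints; $\FPdim\circ F=\FPdim$; in $\mathrm{K}_0(\catC)$ one has $[Y]\cdot[R_{\catC}]=\FPdim(Y)\,[R_{\catC}]$ for every $Y$; the coefficient of $[P(\1_{\catC})]$ in $[R_{\catC}]$ is $\FPdim(\1)=1$; $\FPdim(\catC)=\FPdim([R_{\catC}])$ after extending $\FPdim$ linearly to $\mathrm{K}_0(\catC)\otimes\R$; and, up to a positive real scalar, $[R_{\catC}]$ is the unique element of $\mathrm{K}_0(\catC)\otimes\R$ that lies in the span of the classes of projective objects and satisfies that eigenvalue identity for all $Y$.

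First I would establish the inequality $\FPdim(\catD)\leq\FPdim(\catC)$ for $F$ surjective. The single use of surjectivity is this: then the right adjoint $F^{\ast}\colon\catD\to\catC$ is exact, so $F$ carries projective objects to projective objects (equivalently $F(P(\1_{\catC}))$ is projective, whence so is every $F(P)$, since $P$ is a direct summand of $P(\1_{\catC})\otimes X$ for suitable $X$). Hence the induced ring map $\mathrm{K}_0(\catC)\to\mathrm{K}_0(\catD)$ sends $[R_{\catC}]$ into the span of projective classes of $\catD$, and $[F(Y)]\cdot[F(R_{\catC})]=\FPdim(Y)\,[F(R_{\catC})]=\FPdim(F(Y))\,[F(R_{\catC})]$ for every $Y\in\catC$. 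Since by surjectivity every simple object of $\catD$ is a composition factor of some $F(Y)$, a Perron--Frobenius positivity argument upgrades this to $[Z]\cdot[F(R_{\catC})]=\FPdim(Z)\,[F(R_{\catC})]$ for \emph{all} $Z\in\catD$. By the uniqueness statement, $[F(R_{\catC})]=\lambda\,[R_{\catD}]$ for some real $\lambda\geq1$; the bound holds because $[P(\1_{\catD})]$ occurs in $[F(P(\1_{\catC}))]$ with coefficient at least $1$, that object being projective and surjecting onto $F(\1_{\catC})=\1_{\catD}$. Applying $\FPdim$ gives $\FPdim(\catC)=\FPdim([R_{\catC}])=\FPdim([F(R_{\catC})])=\lambda\,\FPdim(\catD)\geq\FPdim(\catD)$.

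Next, the equality case: $\FPdim(\catC)=\FPdim(\catD)$ forces $\lambda=1$, i.e.\ $[F(R_{\catC})]=[R_{\catD}]$. Set $A=F^{\ast}(\1_{\catD})$, an algebra in $\catC$ with $\Hom_{\catC}(\1,A)=\Hom_{\catD}(\1_{\catD},\1_{\catD})=\C$. Counting homomorphisms into $\1_{\catD}$ two ways: on one hand $\sum_i\FPdim(X_i)\dim\Hom_{\catD}(F(P_i),\1_{\catD})$ equals the coefficient of $[P(\1_{\catD})]$ in $[F(R_{\catC})]=[R_{\catD}]$, which is $1$; on the other hand, by adjunction and projectivity, $\dim\Hom_{\catD}(F(P_i),\1_{\catD})=\dim\Hom_{\catC}(P_i,A)=[A:X_i]$, so the same sum equals $\sum_i[A:X_i]\FPdim(X_i)=\FPdim(A)$. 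Hence $\FPdim(A)=1$, so $A$ is invertible and, since $\Hom_{\catC}(\1,A)\neq0$, $A\cong\1$. The projection formula $F^{\ast}F(X)\cong X\otimes A\cong X$, under which the unit $\id\to F^{\ast}F$ becomes $\id\otimes\eta_A$, then shows the unit is an isomorphism; so $F$ is fully faithful, and being also exact and surjective it is an equivalence. The injective case runs along the same lines (comparing $[R_{\catD}]$ with the regular object of $\catC$ via the $\catC$-module category structure of $\catD$, as in \cite{EGNO15}): one gets $\FPdim(\catC)\leq\FPdim(\catD)$, with equality forcing $\catC=\catD$.

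The main obstacle, and the only genuinely delicate input, is controlling the adjoint: a tensor functor between finite tensor categories need \emph{not} preserve projectivity in general -- already the inclusion of a semisimple tensor category as a full tensor subcategory of a non-semisimple one fails to -- so without surjectivity one cannot transport $[R_{\catC}]$ into the span of projective classes of $\catD$. Surjectivity is exactly the hypothesis that makes $F^{\ast}$ exact, and dually a full embedding is what makes injective hulls behave well. A secondary technical point is the Perron--Frobenius upgrade of the eigenvalue identity from the generating classes $[F(Y)]$ to all of $\mathrm{K}_0(\catD)$; granted these two points, the inequality and the extraction of equality via $\FPdim(A)=1$ are routine bookkeeping with composition multiplicities and the normalisation of the regular object.
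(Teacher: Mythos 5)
The paper gives no proof of this lemma at all: it is quoted verbatim from \cite{EGNO15} (Propositions 6.3.3 and 6.3.4), so the only meaningful benchmark is the textbook argument. Your treatment of the surjective half does reconstruct that argument faithfully and correctly in outline: transport the regular object, identify $[F(R_{\catC})]=\lambda[R_{\catD}]$ by Frobenius--Perron uniqueness among eigenvectors in the span of projectives, bound $\lambda\geq 1$ via the coefficient of $P(\1)$ in $F(P(\1))$, and in the equality case compute $\FPdim(F^{\ast}(\1))=1$ to force the adjunction unit to be invertible via the projection formula. Each of these steps is sound.

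There are, however, two genuine gaps. First, the step on which everything rests -- ``surjectivity makes the right adjoint exact, hence $F$ preserves projectives'' -- is precisely the content of \cite{EGNO15} Theorem 6.1.16 and carries essentially all of the difficulty of the inequality; you assert it without proof, and there is no short direct argument from surjectivity to exactness of the adjoint (the two claims are equivalent, so nothing has been reduced). This must at least be cited explicitly as an external input. Second, the injective half is not proved, and the claim that it ``runs along the same lines'' is untenable for the reason you yourself point out: a full tensor embedding need not preserve projectives, so the mechanism of pushing $[R_{\catC}]$ into the span of projective classes of $\catD$ is simply unavailable; \cite{EGNO15} Proposition 6.3.3 proceeds by a different device (exploiting that $\catD$ is an exact module category over $\catC$), and that half needs its own argument rather than an appeal to analogy. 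A minor point: the closing inference ``fully faithful, exact and surjective, hence an equivalence'' is correct but deserves a line -- essential surjectivity follows because, by surjectivity and preservation of projectives, every projective of $\catD$ is a direct summand of some $F(P)$, so every object of $\catD$ admits a presentation by objects in the image of $F$.
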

More precisely, one can quantify
\begin{lemma}[\cite{EGNO15} Proposition 6.2.4]
If $F$ is surjective and $G$ a right adjoint, then for every object $X$
$$\FPdim(G(X))=\frac{\FPdim(\catC)}{\FPdim(\catD)}\FPdim(X)$$
\end{lemma}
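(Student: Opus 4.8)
This is EGNO Proposition 6.2.4; here is how the argument goes. Write $A := G(1_\catD)$. Recall first that $F$, being a tensor functor between tensor categories, is exact and — by finiteness — has a right adjoint $G$, which is moreover exact and faithful: exactness because in a finite (hence Frobenius) tensor category the left and right adjoints of $F$ differ only by twists by invertible objects, and faithfulness because surjectivity of $F$ forces $G(M)\neq 0$ for every $M\neq 0$. The key structural input is the \emph{projection formula}: for $M\in\catD$ and $Y\in\catC$ there is a natural isomorphism
\[
G(M\otimes F(Y))\;\cong\;G(M)\otimes Y ,
\]
proved by Yoneda, moving tensor factors across $\Hom$-spaces using rigidity of $\catC$ and $\catD$, the adjunction $F\dashv G$, and the fact that the strong monoidal $F$ preserves duals. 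I also use $\FPdim(F(Y))=\FPdim(Y)$ (the cited lemma) together with additivity and multiplicativity of $\FPdim$.

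\emph{Step 1 (proportionality).} Put $\delta(M):=\FPdim(G(M))-\FPdim(A)\,\FPdim(M)$. Both summands are additive in short exact sequences ($G$ exact, $\FPdim$ additive), so $\delta$ descends to an $\R$-linear functional on $K_0(\catD)_\R$ with $\delta(1)=0$; applying $\FPdim$ to the projection formula yields $\delta(M\otimes F(Y))=\delta(M)\,\FPdim(Y)$, so $\delta$ kills every $[F(Y)]$. Let $Y_0$ be a direct sum of representatives of all simples of $\catC$. By surjectivity, every simple of $\catD$ is a composition factor of $F(Y_0)$; hence, in the basis of $K_0(\catD)$ given by the simples $X_1,\dots,X_m$ of $\catD$, the matrix $M_{ik}:=[X_i\otimes F(Y_0):X_k]$ is \emph{strictly positive} — for any $i,k$ the evaluation gives a surjection $X_i\otimes X_i^{*}\otimes X_k\twoheadrightarrow X_k$, so $X_k$ is a composition factor of $X_i\otimes X_\ell$ for some simple $X_\ell$, and $X_\ell$ appears in $F(Y_0)$. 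The relation $\delta(X_i\otimes F(Y_0))=\FPdim(Y_0)\,\delta(X_i)$ then exhibits $(\delta(X_i))_i$ as an eigenvector of $M$ for the eigenvalue $\FPdim(Y_0)$; the same relation for $\FPdim$ shows $(\FPdim(X_i))_i$ is a \emph{positive} eigenvector for that eigenvalue, so by Perron--Frobenius the eigenvalue is simple and $(\delta(X_i))_i\in\R\cdot(\FPdim(X_i))_i$. Since $\delta(1)=0$ the scalar vanishes, i.e.\ $\FPdim(G(M))=\FPdim(A)\,\FPdim(M)$ for all $M$.

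\emph{Step 2 (the constant).} It remains to identify $\FPdim(A)$ with $\FPdim(\catC)/\FPdim(\catD)$. Now $A=G(1_\catD)$ is an algebra in $\catC$ (the right adjoint of a strong monoidal functor is lax monoidal), and it is haploid since $\dim\Hom_\catC(1,A)=\dim\Hom_\catD(1,1)=1$. Because $F$ is surjective, $G$ is monadic (it has a left adjoint, is exact, and reflects isomorphisms), and by the projection formula the monad $GF$ is $A\otimes(-)$; hence $\catD$ is recovered, as a tensor category, as the category $\catC_A$ of $A$-modules, with $F$ the free-module functor. In particular $A$ is exact, so $\FPdim(\catD)=\FPdim(\catC_A)=\FPdim(\catC)/\FPdim(A)$ by the global-dimension formula for the module category of a haploid exact algebra \cite{EGNO15}. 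Together with Step 1 this gives the lemma. (One can also pin the constant intrinsically: by the projection formula and $W\otimes R_\catD=\FPdim(W)R_\catD$, the class $[G(R_\catD)]$ is a common Frobenius--Perron eigenvector in $K_0(\catC)_\R$, hence equals $c[R_\catC]$ for some $c>0$; comparing $\FPdim(G(R_\catD))=c\,\FPdim(\catC)$ with Step 1 at $M=R_\catD$ gives $c\,\FPdim(\catC)=\FPdim(A)\,\FPdim(\catD)$, and $c=1$ follows from the normalization of the regular object, using that $G$ preserves projectives.)

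\emph{Where the difficulty lies.} The formal skeleton — projection formula, $\FPdim\circ F=\FPdim$, additivity — is routine. The real content sits in two places: exactness and faithfulness of $G$, which rely on the Frobenius property of finite tensor categories; and the identification of the constant, where I would rather use $\catD\cong\catC_A$ together with $\FPdim(\catC_A)=\FPdim(\catC)/\FPdim(A)$ than the intrinsic route, since the latter's normalization step ($c=1$) is the delicate point. It is worth noting that the Perron--Frobenius argument of Step 1 is genuinely needed: surjectivity of $F$ is \emph{strictly} weaker than surjectivity of $[F]\colon K_0(\catC)_\R\to K_0(\catD)_\R$ — already for $\res\colon\mathrm{Rep}(S_3)\to\mathrm{Rep}(\Z/3)$ the two nontrivial simples have equal multiplicity in every $[F(Y)]$ — so one cannot simply argue that $\delta$ vanishes on a spanning set.
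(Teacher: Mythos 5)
Your Step~1 is correct and is a genuinely different route to proportionality than the one in \cite{EGNO15}: the projection formula $G(M\otimes F(Y))\cong G(M)\otimes Y$ plus the Perron--Frobenius argument on the strictly positive matrix $[X_i\otimes F(Y_0):X_k]$ does give $\FPdim(G(M))=\FPdim(A)\,\FPdim(M)$ with $A=G(\1)$, and your closing observation that $[F]$ need not be surjective on $K_0(\catD)_\R$ correctly identifies why some such argument is unavoidable.

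The gap is in Step~2. Your main route invokes $\FPdim(\catC_A)=\FPdim(\catC)/\FPdim(A)$ for the haploid exact algebra $A$; but that formula is itself standardly \emph{deduced from the lemma you are proving}, by applying it to the free-module functor $\catC\to\catC_A$ and its right adjoint (the forgetful functor) at $X=\1$ --- this is exactly how the present paper obtains its Corollary~\ref{cor_FP_CA}. So as written Step~2 is circular unless you supply an independent proof of that formula, and the natural independent proof routes back through the very statement at issue. Your parenthetical ``intrinsic route'' is the right idea but stops precisely at the hard point: fixing $c=1$ in $[G(R_\catD)]=c\,[R_\catC]$ amounts to $\dim\Hom(\1,R_\catC)=\dim\Hom(\1,R_\catD)$, which is not a harmless normalization --- it requires knowing that the injective hull of $\1$ is the projective cover of an \emph{invertible} (the distinguished invertible object), i.e.\ genuine extra theory.

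The fix --- and this is how \cite{EGNO15} actually argues --- is to push the regular object in the \emph{other} direction. The same eigenvector argument shows $[F(R_\catC)]=c\,[R_\catD]$ for some $c>0$: the class $[F(R_\catC)]$ has nonnegative coordinates and satisfies $[F(R_\catC)]\cdot[F(Y)]=\FPdim(Y)\,[F(R_\catC)]$, and surjectivity makes the relevant matrix strictly positive. Now the constant comes for free, since $\FPdim(F(R_\catC))=\FPdim(R_\catC)=\FPdim(\catC)$ forces $c=\FPdim(\catC)/\FPdim(\catD)$; no separate normalization is needed. The lemma then follows in one line from the adjunction, making Step~1 unnecessary:
\begin{equation*}
\FPdim(G(X))=\sum_i\FPdim(X_i)\dim\Hom(P_i,G(X))=\sum_i\FPdim(X_i)\dim\Hom(F(P_i),X)=\dim\Hom(F(R_\catC),X)=c\,\FPdim(X),
\end{equation*}
using $\dim\Hom(R_\catD,X)=\FPdim(X)$ and the fact that $F$ preserves projectives. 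Note also that this route needs $F$ only to be quasi-tensor, whereas your projection formula uses the full monoidal structure and rigidity.
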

In particular, in the situation of Lemma \ref{sec_commAlgebras} we get for $X=\1$
\begin{corollary}\label{cor_FP_CA}
    For $\cat$ a  braided tensor category and $\cat_A$ a tensor category we have
    $$\FPdim(\catC_A)=\frac{\FPdim(\catC)}{\FPdim(A)}.$$
\end{corollary}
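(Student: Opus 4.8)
The plan is to derive Corollary \ref{cor_FP_CA} directly from the preceding lemma (\cite{EGNO15} Proposition 6.2.4) applied to the induction--forgetful adjoint pair. First I would recall that, in the setup of Section \ref{sec_commAlgebras}, the induction functor $F = A\otimes(-):\catC\to\catC_A$ is a tensor functor between tensor categories (here we use that $\catC$ is a braided tensor category and $\catC_A$ is a tensor category, e.g. via Corollary \ref{cor_isTensorCat} under the hypothesis that the unit of $\catC_A^\loc$ is simple), and that its right adjoint is the forgetful functor $G = \forget_A:\catC_A\to\catC$. So the only thing to check before quoting Proposition 6.2.4 is that $F$ is surjective in the sense of Lemma \ref{lm_EGNOinjectivesurjective}, i.e.\ every simple object of $\catC_A$ is a subquotient of some $A\otimes Y$.

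For surjectivity: given any object $M\in\catC_A$, the counit of the adjunction gives an $A$-module epimorphism $A\otimes \forget_A(M)\twoheadrightarrow M$ (this is the standard fact that every module is a quotient of a relatively free module; concretely $\mu_M\colon A\otimes M\to M$ is split by $\eta\otimes\id_M$ already in $\catC$, hence is epic in $\catC_A$). In particular every simple $M$ is a quotient, a fortiori a subquotient, of $A\otimes\forget_A(M)$, so $F$ is surjective. Now Proposition 6.2.4 applies verbatim: for every object $X\in\catC$,
$$\FPdim(\forget_A(X)) = \frac{\FPdim(\catC)}{\FPdim(\catC_A)}\,\FPdim(A\otimes X).$$
Specializing to $X=\1$, we have $A\otimes\1 \cong A$ as an $A$-module, so $\FPdim(A\otimes\1)=\FPdim(A)$ computed in $\catC_A$; and $\forget_A(\1_{\catC_A}) = \forget_A(A)$, which as an object of $\catC$ is just $A$, so $\FPdim(\forget_A(\1)) = \FPdim_\catC(A)$. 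Actually it is cleaner to take $X = \1$ so that $A\otimes X = \1_{\catC_A}$, giving $\FPdim(\forget_A(\1_{\catC_A})) = \FPdim_\catC(A)$ on the left and $\FPdim(\1_{\catC_A}) = 1$ on the right, whence
$$\FPdim_\catC(A) = \frac{\FPdim(\catC)}{\FPdim(\catC_A)},$$
which rearranges to the claimed identity $\FPdim(\catC_A) = \FPdim(\catC)/\FPdim(A)$. One should note that $\FPdim(A)$ here means the Frobenius--Perron dimension of $A$ as an object of $\catC$; since $F$ is a tensor functor, $\FPdim_{\catC_A}(A\otimes X) = \FPdim_\catC(X)$, and these compatibilities (Lemma \ref{lm_EGNOinjectivesurjective}'s preceding lemma, i.e.\ \cite{EGNO15} Proposition 4.5.7) make all the identifications above consistent.

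The only genuine subtlety --- the ``main obstacle'', though it is mild --- is bookkeeping about \emph{which} tensor category the Frobenius--Perron dimension is taken in, and making sure the hypotheses guaranteeing that $\catC_A$ is a tensor category (finiteness and rigidity) are in force so that $\mathrm{K}_0(\catC_A)$ is a transitive unital $\Z^+$-ring of finite rank and Proposition 6.2.4 is applicable. This is exactly why the statement is phrased as ``for $\cat$ a braided tensor category and $\cat_A$ a tensor category'': once that is granted (via Theorem \ref{thm_UATensorCat}/Corollary \ref{cor_isTensorCat}), surjectivity of induction is automatic and the rest is the one-line computation above.
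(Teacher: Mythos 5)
Your proposal is correct and follows exactly the paper's route: the paper obtains this corollary by specializing \cite{EGNO15} Proposition 6.2.4 to the adjoint pair $(A\otimes(-),\forget_A)$ at $X=\1_{\catC_A}$, which is precisely your computation. The only addition you make is the explicit (and correct) verification that induction is surjective via the counit epimorphism $A\otimes\forget_A(M)\twoheadrightarrow M$, a step the paper leaves implicit.
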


On the other hand, for local modules we have 

\begin{lemma}[\cite{LW23} Corollary 4.26, \cite{SY24} Lemma 5.17]\label{cor_FP_CAloc}
 For $\catC$ a finite tensor category and $\cat_A$ a tensor category we have 
$$\FPdim(\catC_A^\loc)\geq \frac{\FPdim(\catC)}{\FPdim(A)^2},$$
and equality holds iff $\catC$ has a nondegenerate braiding.
\end{lemma}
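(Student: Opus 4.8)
Since the statement is quoted from \cite{LW23} and \cite{SY24}, one may simply cite it; were I to reprove it, I would argue through the Drinfeld center of $\catC_A$. Under the standing hypotheses the algebra $A$ is haploid and exact (Corollary \ref{cor_isTensorCat}), so $\catC_A$ is a finite tensor category and $\cZ(\catC_A)$ is a \emph{nondegenerate} braided finite tensor category with $\FPdim(\cZ(\catC_A)) = \FPdim(\catC_A)^2$; combined with Corollary \ref{cor_FP_CA} this reads $\FPdim(\cZ(\catC_A)) = \FPdim(\catC)^2/\FPdim(A)^2$. The first step is to realize $\catC_A^\loc$ inside this center. The free-module functor $\catC \to \catC_A$ is a central tensor functor, and its canonical lift $\tilde F : \catC \to \cZ(\catC_A)$ is faithful. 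I would then establish the (standard, cf. \cite{CKM24}) fact that a local $A$-module carries a canonical half-braiding against \emph{all} of $\catC_A$, and that this identifies $\catC_A^\loc$ with precisely the M\"uger centralizer $C_{\cZ(\catC_A)}(\tilde F(\catC))$ inside $\cZ(\catC_A)$ --- indeed the defining condition of locality for an $A$-module is exactly that its over- and under-braiding with $A$, equivalently with every induced object, coincide.

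The second step is the dimension count. Factor $\tilde F$ as a dominant functor onto its image followed by a full embedding, $\catC \to \catC/\catK \hookrightarrow \cZ(\catC_A)$, where the ``kernel'' tensor subcategory $\catK$ satisfies $\FPdim(\catK) \geq 1$, so that $\FPdim(\tilde F(\catC)) = \FPdim(\catC)/\FPdim(\catK)$. The double-centralizer Frobenius--Perron identity in the nondegenerate braided finite tensor category $\cZ(\catC_A)$ (the non-semisimple analogue of M\"uger's theorem; see \cite{SY24}) gives $\FPdim(\tilde F(\catC)) \cdot \FPdim\big(C_{\cZ(\catC_A)}(\tilde F(\catC))\big) = \FPdim(\cZ(\catC_A))$, whence with the first step
$$\FPdim(\catC_A^\loc) \;=\; \frac{\FPdim(\cZ(\catC_A))}{\FPdim(\tilde F(\catC))} \;=\; \FPdim(\catK)\cdot\frac{\FPdim(\catC)}{\FPdim(A)^2} \;\geq\; \frac{\FPdim(\catC)}{\FPdim(A)^2},$$
with equality iff $\catK$ is trivial, i.e. iff $\tilde F$ is fully faithful.

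Finally I would identify fullness of $\tilde F$ with nondegeneracy of $\catC$: an object $X$ lies in $\catK$ iff $A\otimes X$ is a trivial $A$-module with trivial induced half-braiding, which for an extension of the kind at hand forces $X$ into the M\"uger center of $\catC$; hence $\catK$ is trivial iff the braiding of $\catC$ is nondegenerate. (In the relevant case $\catC = \Vect_\Gamma^Q$ with $A$ the simple current extension by an isotropic subgroup $I$, one can instead compute $\catK$ directly from $I^{\perp}$ and $\catC_A^\loc = \Vect_{I^\perp/I}^{Q_I}$, recovering that it has full Frobenius--Perron dimension exactly when $Q$ is nondegenerate.)

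The main obstacle is the first step: making precise, in the possibly non-semisimple setting, that $\catC_A^\loc \simeq C_{\cZ(\catC_A)}(\tilde F(\catC))$ as braided categories, and securing the non-semisimple double-centralizer Frobenius--Perron identity in $\cZ(\catC_A)$. Everything else is bookkeeping around Corollary \ref{cor_FP_CA} and the fact that $\FPdim$ of a Drinfeld center is the square of the original dimension.
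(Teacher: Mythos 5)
The paper does not actually prove this lemma: it is imported verbatim from \cite{LW23} Corollary 4.26 and \cite{SY24} Lemma 5.17, and the only argument in the text is the one-sentence remark after the statement about the converse direction (via the Schauenburg functor). So there is no internal proof to match yours against; what I can assess is whether your reconstruction is sound. Your overall strategy --- realize $\catC_A^{\loc}$ as the M\"uger centralizer of the image of the central lift $\tilde{F}:\catC\to\cZ(\catC_A)$ inside the nondegenerate category $\cZ(\catC_A)$, then apply a double-centralizer Frobenius--Perron identity --- is indeed the strategy of the cited sources (going back to Davydov--M\"uger--Nikshych--Ostrik in the fusion case), so you are in the right family of arguments.

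However, the step you dismiss as "the main obstacle" is genuinely where the theorem lives, and your treatment of it does not close. The easy direction is a fully faithful braided functor $\catC_A^{\loc}\to C_{\cZ(\catC_A)}(\tilde{F}^{\mp}(\catC))$; by Lemma~\ref{lm_EGNOinjectivesurjective} this only yields $\FPdim(\catC_A^{\loc})\leq\FPdim\bigl(C_{\cZ(\catC_A)}(\tilde{F}(\catC))\bigr)$, and combining it with the double-centralizer identity gives an inequality pointing the wrong way. To get the stated lower bound you need essential surjectivity of this comparison functor (every centralizing object of $\cZ(\catC_A)$ is a local module with its canonical half-braiding), and in the literature that is proved \emph{using} nondegeneracy of $\catC$ --- exactly what you cannot assume here. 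Second, the bookkeeping $\FPdim(\tilde{F}(\catC))=\FPdim(\catC)/\FPdim(\mathcal{K})$ presupposes an exact sequence of tensor categories; for an arbitrary dominant tensor functor there is no "kernel" with multiplicative $\FPdim$, only the inequality $\FPdim(\tilde{F}(\catC))\leq\FPdim(\catC)$, which cannot support your equality analysis. Third, the concluding equivalence "$\mathcal{K}$ trivial iff $\catC$ nondegenerate" is asserted rather than proved, and the direction needed for "equality $\Rightarrow$ nondegenerate" fails as argued: you show at most that objects of $\mathcal{K}$ are M\"uger-central, not that all M\"uger-central objects lie in $\mathcal{K}$. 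A test case makes the problem vivid: for $A=\mathbf{1}$ in a symmetric finite tensor category such as $\mathrm{Rep}(G)$ one has $\catC_A^{\loc}=\catC$, so equality holds in the displayed formula and $\tilde{F}$ is fully faithful, yet the braiding is maximally degenerate. So the "equality $\Rightarrow$ nondegenerate" direction must exploit properties of $A$ beyond what your argument (and, taken literally, the lemma's stated hypotheses) invokes; any proof of it has to engage with how $A$ sits relative to the M\"uger center of $\catC$, which your sketch never does.
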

Note that the converse statements holds, because equality means that the Schauenburg functor in the next section is injective and then our calculation shows it is also surjective.

\section{Relative Drinfeld centers and  reconstruction results}
\label{sec_reconstruction}

\subsection{Relative Drinfeld center}

For $\CC \subset \BB$ a central (full) subcategory, there is the notion of a relative Drinfeld center $\cZ_\CC(\BB)$, a braided monoidal subcategory of  $\cZ(\BB)$ defined as the the Müger centralizer of $\CC^{\mathrm{rev}}$ embedded into $\CC$ via the central structure, see \cites{Lau20,LW22}. Hence we have embeddings of braided monoidal categories

$$\CC \subset \cZ_\CC(\BB)\subset \cZ(\BB).$$

\subsection{Schauenburg functor}

We recall our following result from \cite{CLR23} Section 3, building on respective work in \cite{Sch01}.

\begin{definition}
Let $A$ be a commutative algebra in a braided monoidal category~$\UU$. Then there is a braided tensor functor, called \emph{Schauenburg functor} in what follows, to the relative Drinfeld center of $\UU_A$ over $\CC=\UU_A^\loc$
\begin{align*}
\Schauenburg:  \UU 
&\hookrightarrow \mathcal{Z}_{\CC}(\UU_A)\\
X 
&\mapsto (A\otimes X, c_{-, X}). 
\end{align*}
\end{definition}

That is, the induction functor is in fact a central functor compatible with the central subcategory $\CC\subset \UU_A$. The main point is now that in good cases this is an equivalence of braided tensor categories.

\begin{theorem}[\cite{CLR23} Corollary 3.8]\label{thm_Schauenburg}
Assume that $\UU,\UU_A$ are rigid and finite and $\UU$ has trivial Müger center and  $A$ is haploid. Then the Schauenburg functor is an equivalence of braided monoidal categories
\begin{align*}
\Schauenburg:  \UU 
&\cong \mathcal{Z}_{\UU_A^\loc}(\UU_A).
\end{align*}
\end{theorem}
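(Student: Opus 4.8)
The final statement to prove is Theorem~\ref{thm_Schauenburg}: under the stated hypotheses (rigidity and finiteness of $\UU$ and $\UU_A$, trivial M\"uger center of $\UU$, haploid $A$), the Schauenburg functor $\Schauenburg:\UU\to\cZ_{\UU_A^\loc}(\UU_A)$ is an equivalence of braided tensor categories. Let me sketch the proof.

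\textbf{Plan of proof.} The plan is to verify that $\Schauenburg$ is a well-defined braided tensor functor (this part is essentially the content of the preceding Definition and the cited work of Schauenburg), and then to prove it is an equivalence by a Frobenius-Perron dimension count combined with a check of faithfulness. First I would recall that $\Schauenburg$ factors the induction functor: composing with the forgetful functor $\cZ_{\CC}(\UU_A)\to\UU_A$ recovers $A\otimes(-)$, which is a tensor functor; the half-braiding $c_{-,X}$ on $A\otimes X$ uses exactly the braiding of $\UU$, and one checks it lies in the M\"uger centralizer of $\CC^{\rev}$ because local modules are precisely those on which the double braiding with $A$ is trivial, so the half-braiding is compatible with passing to $\otimes_A$. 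Braidedness of $\Schauenburg$ is then a direct computation with $c_{A\otimes X,A\otimes Y}$ in the center reducing to $c_{X,Y}$ in $\UU$. Since $\UU$ has trivial M\"uger center and $A$ is haploid, Corollary~\ref{cor_isTensorCat} (applied via simplicity of the unit of $\UU_A^\loc$, which follows from haploidness) gives that $\UU_A$ is a tensor category and $\UU_A^\loc$ a braided tensor category, so all the Frobenius-Perron machinery of Section~\ref{sec_FP} is available.

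\textbf{Faithfulness and the dimension count.} Next I would establish that $\Schauenburg$ is faithful. Because $A$ is haploid, $\Hom_{\UU_A}(A,A)=\C$, and one shows $\Hom_{\UU}(X,Y)\hookrightarrow\Hom_{\UU_A}(A\otimes X,A\otimes Y)$ using that the forgetful functor $\UU_A\to\UU$ is faithful and the adjunction between $A\otimes(-)$ and $\forget_A$: a morphism $f:X\to Y$ in $\UU$ with $\id_A\otimes f=0$ would, after applying $\forget_A$ and using that $X$ is a retract of $\forget_A(A\otimes X)$ via the unit $\eta$, force $f=0$. Hence $\Schauenburg$ is faithful, and being a tensor functor between tensor categories it preserves Frobenius-Perron dimensions of objects (Lemma quoted from \cite{EGNO15} Prop.~4.5.7). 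A faithful tensor functor need not be fully faithful, but the strategy is instead to compute $\FPdim(\cZ_{\UU_A^\loc}(\UU_A))$ directly: by Corollary~\ref{cor_FP_CA}, $\FPdim(\UU_A)=\FPdim(\UU)/\FPdim(A)$; the relative center of $\UU_A$ over $\UU_A^\loc$ has Frobenius-Perron dimension $\FPdim(\UU_A)^2/\FPdim(\UU_A^\loc)$ by the general formula for relative centers \cite{LW22}; and by Lemma~\ref{cor_FP_CAloc}, triviality of the M\"uger center of $\UU$ (equivalently nondegeneracy of its braiding) gives $\FPdim(\UU_A^\loc)=\FPdim(\UU)/\FPdim(A)^2$. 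Plugging in,
\begin{align*}
\FPdim(\cZ_{\UU_A^\loc}(\UU_A))
&=\frac{(\FPdim(\UU)/\FPdim(A))^2}{\FPdim(\UU)/\FPdim(A)^2}
=\FPdim(\UU).
\end{align*}

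\textbf{Conclusion and the main obstacle.} Now I would invoke the injectivity part of Lemma~\ref{lm_EGNOinjectivesurjective}: a faithful (hence injective on the level of Grothendieck rings, once one knows it is fully faithful, or alternatively one argues directly) tensor functor $\UU\to\cZ_{\UU_A^\loc}(\UU_A)$ between tensor categories of equal Frobenius-Perron dimension is an equivalence. To apply this cleanly one wants $\Schauenburg$ to be \emph{fully} faithful, not merely faithful; the way I would get this is to note that $\Schauenburg$ being a central functor whose composition with the forgetful functor is the induction functor, and using the adjunction $(A\otimes(-),\forget_A)$ together with the projection formula, one identifies $\Hom_{\cZ_\CC(\UU_A)}(\Schauenburg(X),\Schauenburg(Y))$ with the $A$-invariants of $\Hom_{\UU}(X, A\otimes Y)$, and haploidness collapses this to $\Hom_{\UU}(X,Y)$. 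The main obstacle, and the step requiring genuine care, is precisely this fully-faithfulness computation: one must check that the half-braiding constraint in the relative center cuts the morphism space down exactly so as to match $\Hom_\UU$, i.e. that a $\UU_A$-morphism $A\otimes X\to A\otimes Y$ commuting with all the half-braidings $c_{-,X}$, $c_{-,Y}$ is induced from a unique morphism $X\to Y$ in $\UU$; this is where Schauenburg's original argument \cite{Sch01} and the haploid hypothesis do the real work. Once full faithfulness is in hand, the dimension count above finishes the proof, and the braided structure was already checked, so $\Schauenburg$ is an equivalence of braided tensor categories.
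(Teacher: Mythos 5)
Your proposal follows essentially the same route as the paper's (sketched) proof: establish that $\Schauenburg$ is a fully faithful braided tensor functor, compute that $\FPdim(\cZ_{\UU_A^\loc}(\UU_A))=\FPdim(\UU)$ using Corollary~\ref{cor_FP_CA}, Lemma~\ref{cor_FP_CAloc} (with nondegeneracy of the braiding supplying the equality), and the relative-center dimension formula, and then conclude by Lemma~\ref{lm_EGNOinjectivesurjective}. The dimension count is correct and the identification of full faithfulness as the step carrying the real weight matches where the paper defers to \cite{CLR23} Lemma~3.4.
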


For the proof we first establish that $\Schauenburg$ is fully faithful and then check that Frobenius-Perron dimensions on both sides coincide and apply Lemma \ref{lm_EGNOinjectivesurjective}. 

In fact, several weaker version of this statement can be proven. In \cite{CLR23} Lemma 3.4 we establish fully faithulness in locally finite cases and in \cite{CLR23} Corollary 3.13 we prove the equivalence in a ''relatively finite'' situation. The intention of these versions is to manage the infinite examples of unrolled quantum group and singlet vertex algebra. \\

In \cite{CMSY24} Theorem 3.21 the assumption of rigidity is largely removed and the nondegeneracy of the braiding is replaced by a version involving only simple modules contained in the algebra.

\subsection{Adjoint algebra}

Conversely, we can consider the right adjoint to the tensor functor $\cZ_\CC(\BB)\to \BB$ forgetting the half-braiding. The right adjoint is a lax tensor functor, so  the image of $\1$ is an algebra $A$ in $\cZ_\CC(\BB)$ called \emph{adjoint algebra}. By \cite{LW22} and \cite{SY24} Section 6.2.3 this algebra reconstructs our initial situation:

\begin{corollary}
Suppose $\BB$ is a tensor category (so: finite and rigid) and $\CC$ is a braided tensor  category with nondegenerate braiding, then the adjoint algebra~$A$ in $\cZ_\CC(\BB)$ has the property
$$\cZ_\CC(\BB)_A=\BB$$
$$\cZ_\CC(\BB)_A^\loc=\CC$$
\end{corollary}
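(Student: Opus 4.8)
The plan is to derive this corollary directly from the Schauenburg functor result (Theorem~\ref{thm_Schauenburg}) together with the reconstruction property of the adjoint algebra established in \cite{LW22} and \cite{SY24}. First I would unwind what the adjoint algebra $A$ is: the functor $U:\cZ_\CC(\BB)\to \BB$ forgetting the half-braiding is a tensor functor, and its right adjoint $R$ is a lax tensor functor (the composite of the right adjoint of the forgetful functor $\cZ(\BB)\to\BB$ with the centralizer-projection), so $A=R(\1)$ carries the structure of a commutative algebra in $\cZ_\CC(\BB)$ by the standard fact that the image of the unit under a lax tensor right adjoint of a tensor functor is a (commutative, when the source is braided) algebra. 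The content of \cite{LW22}, \cite{SY24}~Section~6.2.3 is precisely that for $\BB$ a tensor category and $\CC$ a braided tensor category with nondegenerate braiding, this $A$ satisfies $\cZ_\CC(\BB)_A\simeq \BB$ and $\cZ_\CC(\BB)_A^{\loc}\simeq \CC$, with the equivalences being the obvious free-module and local-module comparison functors — so in one reading the corollary is just a citation.

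If instead I wanted a self-contained argument from the tools in the excerpt, I would run the Schauenburg functor in reverse. Set $\UU:=\cZ_\CC(\BB)$. By the relative-center construction $\UU$ is a braided tensor category containing $\CC$ as a central full subcategory, and its Müger center is trivial precisely because $\cZ_\CC(\BB)$ is the Müger centralizer of $\CC^{\rev}$ inside $\cZ(\BB)$ and $\cZ(\BB)$ is nondegenerate, so any object centralizing all of $\UU$ in particular centralizes $\CC$ and lies in $\CC$, hence centralizes $\CC$ too, forcing it into the Müger center of $\CC$, which is trivial by hypothesis — so such an object is invertible-unit, i.e.\ trivial. Now apply Theorem~\ref{thm_Schauenburg} with this $\UU$ and with $A$ the haploid commutative algebra therein: the theorem gives $\UU\simeq \cZ_{\UU_A^{\loc}}(\UU_A)$ as braided tensor categories. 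The remaining task is to identify $\UU_A$ with $\BB$ and $\UU_A^{\loc}$ with $\CC$ compatibly. For the first, one checks that induction $A\otimes(-):\UU\to\UU_A$ is, under the Schauenburg equivalence, identified with the forgetful functor $U:\cZ_\CC(\BB)\to\BB$: both are tensor functors with the same right adjoint-reconstruction data, and $A=R(\1)$ ensures $\UU_A=\Mod(R(\1))(\UU)$ is equivalent, by the monadicity / Barr–Beck argument, to $\BB$ since $U$ is exact, faithful, and $\BB$ has enough of the required (co)limits (here finiteness is used). For the second, local modules over $A$ correspond exactly to objects of $\BB$ on which the half-braiding with everything in $\CC$ is symmetric in the relevant sense, which recovers $\CC$.

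The key steps in order: (1) identify $A=R(\1)$ as a commutative — and, by haploidness $\Hom_\UU(\1,A)=\Hom_\BB(\1,\1)=\C$, haploid — algebra in $\UU=\cZ_\CC(\BB)$; (2) verify $\UU$ has trivial Müger center using nondegeneracy of $\CC$ and of $\cZ(\BB)$; (3) invoke Theorem~\ref{thm_Schauenburg} to get $\UU\simeq\cZ_{\UU_A^{\loc}}(\UU_A)$; (4) prove $\UU_A\simeq\BB$ via monadicity of the forgetful functor $U$, using that $U$ is exact and faithful and $\BB$ finite; (5) prove $\UU_A^{\loc}\simeq\CC$ by matching the locality condition with the half-braiding condition defining the image of $\CC$.

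I expect step~(4), the identification $\UU_A\simeq\BB$, to be the main obstacle: it is the place where one must genuinely show that the adjoint algebra $A$ is ``big enough'' — that every object of $\BB$ arises as an $A$-module — which amounts to a Barr–Beck monadicity check for the lax-monoidal adjunction $(U,R)$, and this is exactly where finiteness and exactness of $U$ enter (so that $R$ exists and the relevant coequalizers are preserved). In the semisimple case this is \cite{EGNO15}-style, but in the present nonsemisimple finite setting one should cite the precise statement in \cite{LW22} or \cite{SY24}~Section~6.2.3 rather than reprove it; indeed the cleanest route, and the one I would actually write, is simply to cite those references for steps (1), (4), (5) and note that step (2)--(3) reconfirm the braided structure on the nose. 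Steps (2) and (5) are routine once the definitions are unwound.
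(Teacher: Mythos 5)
Your primary reading is exactly what the paper does: the corollary is stated with no proof at all, justified solely by the preceding sentence citing \cite{LW22} and \cite{SY24} Section 6.2.3, so your first paragraph (and your concluding remark that the citation is the route you would actually write) matches the paper's approach. The supplementary self-contained sketch via reversing the Schauenburg functor is extra material the paper does not attempt, so there is nothing to compare it against here.
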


In the more general case of a module category, see also \cite{BMM24} for the relative coend.

\begin{problem}
These assertions would be very useful to have for suitable algebras in more general braided monoidal categories. Note that \cite{LW22} give a version where $\BB$ consists of modules of a Hopf algebra and which  does not require finiteness. 
\end{problem}

\subsection{Generalized quantum groups}\label{sec_generalizedQuantumGroup}

Quantum groups will appear in the present article in the following form, as discussed in detail in \cite{CLR23} Section 6: Let $\Nichols$ be a Hopf algebra in a rigid braided monoidal category $\CC$, then we have basically by construction a rigid monoidal category of modules 
$$\Mod(\Nichols)(\CC),\;\otimes_\CC.$$
We can take the relative Drinfeld center of this monoidal category. For finite categories, this coincides with the category of $\Nichols$-Yetter-Drinfeld modules in $\CC$
$$\UU=\YD{\Nichols}(\CC)=\cZ_\CC(\Mod(\Nichols)(\CC)).$$
It is nondegenerately braided iff $\CC$ is nondegenerately braided.

In particular, this construction produces the category of representations of the quantum group. This has been present in some formulation since the beginning of quantum groups \cites{Drin86, Maj98, AG03}, we use the formulation obtained in \cites{Lau20,LW22}:

\begin{example}[Quantum groups]
Let $\g$ be a semisimple finite-dimensional complex Lie algebra and denote the simple roots by $\alpha_1,\ldots,\alpha_n$ and the Killing form by $(-,-)$. Let $\CC=\Vect_\Gamma^Q$ be the braided tensor category in Example \ref{ex_quadraticSpace} for $\Gamma$ some choice of a group containing elements $\alpha_1,\ldots,\alpha_n$ and $Q$ a quadratic form with $(\sigma,\omega)$ a choice of abelian $3$-cocycle such that $\omega=1$ and $\sigma(\alpha_i,\alpha_j)=q^{(\alpha_i,\alpha_j)}$ for a choice of scalar $q$. Let $X=\bigoplus_{i=1}^n \C_{\alpha_i}$ be an object on $\CC$ and $\NicholsOf(X)$ the Nichols algebra associated to the braiding $\sigma(\alpha_i,\alpha_j)$, as briefly reviewed in Section \ref{sec_NicholsDef}.

\medskip
Algebraically, we have the following: The dual group ring $\C^\Gamma$, which is a quasi-triangular Hopf algebra using $\sigma$,  is  the Cartan part of the quantum group $u_q(\g)^0$ with the $R$-matrix $R_0$, and  $\NicholsOf(X)$ is the positive part of the small quantum group $u_q(\g)^+$, and the smash product $u_q(\g)^0\ltimes \NicholsOf(X)$ is the Borel part $u_q(\g)^{\geq0}$. 

\medskip
Categorically, we have the following:
\begin{itemize}
\item The braided tensor category $\CC$ is the category of representations of $u_q(\g)^0$, the category of weight-spaces.
\item The tensor category $\BB=\Mod(\Nichols)(\CC)$ is the category of representations of the Borel part $u_q(\g)^\geq 0$ inside $\CC$, by the key property of the smash product.
\item We have an equivalence of  braided tensor categories, see  \cite{Lau20} Cor.~4.10:
$$\YD{\Nichols}(\CC)=\cZ_\CC(\BB)=\Mod(u_q(\g)).$$
\end{itemize}
Under this correspondence, the functor 
$$\cZ_\CC(\Mod(\Nichols)(\CC))\to \Mod(\Nichols)(\CC)$$
corresponds to restriction to the Hopf subalgebra 
$$u_q(\g)\hookrightarrow u_q(\g)^{\geq0}.$$
Accordingly, the left adjoint is the induction functor $u_q(\g)\otimes_{u_q(\g)^{\geq0}}(-)$ and the right adjoint is the coinduction functor. In particular the adjoint algebra is the dual Verma module of weight zero.
$$A=\big(u_q(\g)\otimes_{u_q(\g)^{\geq0}}\C_\epsilon\big)^*.$$
\end{example}

\begin{example}[Quasi quantum group] Suppose now in the situation above that $\omega\neq 1$ i.e. there is a nontrivial associator in $\CC$. For example, this is necessary if $R_0$ should be a nondegenerate braiding and the order of $q$ is even. Then we can take $\cZ_\CC(\Mod(\Nichols)(\CC))$ as definition of a braided tensor category with associator. We would strongly expect that it coincides with the respective constructions in \cites{CGR20, GLO18, Neg21}.
\end{example}

\begin{remark}
The reader is advised that the duality induced from $\CC$ may not be the same as the duality induced from $\UU$. In particular, the dual Verma module is not self-dual.  
\end{remark}

\section{Reconstructing a categorical Hopf algebra}\label{sec_splitting}

\subsection{Realizing Hopf algebra}

We recall a Tannaka-Krein type reconstruction result relative to a subcategory from our work \cite{LM24}

\begin{theorem}\label{thm_LM}
Let $\BB$ be a tensor category (that is: finite and rigid) and $\CC$ be a braided tensor category. Assume there is a  central functor $\iota:\CC\to \BB$ and a left inverse tensor functor $\l:\BB\to \CC$, which is exact and faithful. Then there exists a Hopf algebra $\Nichols$ in $\CC$, such that $\BB=\Comod(\Nichols)(\CC)$ and $\l$ is the functor forgetting the action of $\Nichols$ and $\iota$ is the functor endowing an object in $\CC$ with the trivial coaction via $\eta_\Nichols$.
\end{theorem}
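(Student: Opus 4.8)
The plan is to realize $\Nichols$ as the comonad attached to $\l$ and its right adjoint, with the central section $\iota$ supplying both the comonadicity of the adjunction and the representability of the comonad by a coalgebra. Since $\BB$ is finite and $\l$ is exact, $\l$ admits a right adjoint $R\colon\CC\to\BB$; write $\eta^{\mathrm{ad}}\colon\id_\BB\Rightarrow R\l$ and $\epsilon^{\mathrm{ad}}\colon\l R\Rightarrow\id_\CC$ for the unit and counit of the adjunction $\l\dashv R$. Then $T:=\l R$ is a comonad on $\CC$, with comultiplication $\l\eta^{\mathrm{ad}}R$ and counit $\epsilon^{\mathrm{ad}}$, and $\BB$ receives the comparison functor $K\colon\BB\to\CC_T$, $b\mapsto(\l b,\,\l\eta^{\mathrm{ad}}_b)$, into the category $\CC_T$ of $T$-comodules in $\CC$.

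First I would show $K$ is an equivalence, i.e. that $R$ is comonadic. Being a right adjoint, $R$ preserves equalizers, so by (crude) Beck comonadicity it remains only to see that $R$ reflects isomorphisms. This is where the section enters: the mate of the isomorphism $\l\iota\cong\id_\CC$ under $\l\dashv R$ is a natural transformation $\theta\colon\iota\Rightarrow R$, and applying $\l$ together with a triangle identity exhibits $\l\theta\colon\id_\CC\Rightarrow T$ as a section of $\epsilon^{\mathrm{ad}}$. Hence, if $R(f)$ is invertible for a morphism $f$ of $\CC$, then so is $T(f)=\l R(f)$, and the natural retraction of $f$ through $T(f)$ furnished by $\l\theta$ and $\epsilon^{\mathrm{ad}}$ forces $f$ to be invertible. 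Thus $\BB\cong\CC_T$ as abelian categories, with $\l$ corresponding to the functor forgetting the $T$-coaction.

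Next I would make $T$ representable. Because $\iota$ is \emph{central}, the rule $c\cdot b:=\iota(c)\otimes b\cong b\otimes\iota(c)$ turns $\BB$ into a $\CC$-bimodule category and $\l$ into a $\CC$-bimodule functor (using $\l(\iota(c)\otimes b)\cong\l\iota(c)\otimes\l b\cong c\otimes\l b$), so its right adjoint $R$, and hence the comonad $T$, inherit $\CC$-bimodule structures. As $\l$ is exact and $R$ is a right adjoint, $T$ is left exact; being a left exact $\CC$-linear endofunctor of a finite rigid tensor category, $T$ is representable, $T\cong\Nichols\otimes(-)$ for the object $\Nichols:=T(\1)=\l R(\1)\in\CC$. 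Transporting the comonad structure makes $\Nichols$ a coalgebra with $\CC_T=\Comod(\Nichols)(\CC)$, hence $\BB\cong\Comod(\Nichols)(\CC)$ with $\l$ the functor forgetting the coaction. The component $\eta_\Nichols:=(\l\theta)_\1\colon\1\to\Nichols$ splits $\varepsilon_\Nichols$, and $K(\iota(c))$ comes out as $c$ endowed with the trivial coaction $\eta_\Nichols\otimes c$, exactly as claimed.

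Finally I would install the monoidal and Hopf structure. Since $\l$ is strong monoidal, $R$ is lax monoidal (doctrinal adjunction), so $T$ is an opmonoidal comonad; this makes $\Comod(\Nichols)(\CC)$ monoidal, $K$ a monoidal equivalence, and $\iota$ the inclusion of the trivial comodules, and concretely it equips $\Nichols$ with a multiplication --- using the braiding of $\CC$ to tensor comodules --- turning $\Nichols$ into a bialgebra with unit $\eta_\Nichols$. Rigidity of $\BB$ then descends to $\Comod(\Nichols)(\CC)$, and a bialgebra in $\CC$ whose comodule category is rigid with duals lifted from $\CC$ carries an antipode, so $\Nichols$ is a Hopf algebra. \textbf{The main obstacle} is the combined content of these last two steps: checking that centrality of $\iota$ genuinely forces the $\CC$-module comonad $T$ to be \emph{representable} by an object of $\CC$ (rather than merely $\CC$-linear), and then tracking the opmonoidal data and the antipode carefully enough that $\Nichols$ is honestly a Hopf algebra in $\CC$ and $K$ a monoidal equivalence --- this is exactly where finiteness and rigidity of $\BB$ are used, and where the detailed coherence bookkeeping of \cite{LM24} lies.
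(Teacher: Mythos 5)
This theorem is not proved in the present paper at all: it is imported verbatim from \cite{LM24}, so there is no in-text argument to compare against. That said, your route --- comonadic descent along $\l$ for the comonad $T=\l R$, followed by representing $T$ as $\Nichols\otimes(-)$ with $\Nichols=\l R(\1)$ and reading off the bialgebra structure from (op)lax monoidality of the adjunction --- is exactly the Tannaka--Krein/coend mechanism that \cite{LM24} uses (the paper itself remarks that finiteness enters there through the existence of the right adjoint and of a coend, which is your $\l R(\1)$). So the strategy is the right one.

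There is, however, a genuine error in your comonadicity step. For the comparison functor $K:\BB\to\CC_T$ attached to the comonad $T=\l R$ on $\CC$ to be an equivalence, Beck's (crude) criterion places conditions on the \emph{left} adjoint $\l$, not on $R$: one needs $\l$ to preserve equalizers of ($\l$-split, or coreflexive) pairs and to reflect isomorphisms. Your sentence ``being a right adjoint, $R$ preserves equalizers, so it remains to see that $R$ reflects isomorphisms'' checks conditions on the wrong functor; $R$ reflecting isomorphisms does not yield comonadicity of $\l$. The repair is immediate from the hypotheses and does not need the section at all: $\l$ is exact, hence preserves all equalizers, and a faithful functor between abelian categories reflects monos and epis, hence isomorphisms. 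The section $\iota$ is genuinely needed elsewhere --- to produce the grouplike $\eta_\Nichols=(\l\theta)_\1$, to identify $K\iota$ with the trivial-coaction functor, and to match $\eta_\Nichols$ with the unit of the multiplication coming from the lax monoidal structure of $R$ --- but not for descent. A second, smaller imprecision: ``a left exact $\CC$-linear endofunctor of a finite rigid tensor category is representable'' is not the right justification for $T\cong\Nichols\otimes(-)$ (left exact $\C$-linear endofunctors are of the form $\Hom(N,-)$); what you actually need, and have essentially set up, is that $T$ is a \emph{strong} right $\CC$-module endofunctor of the regular module category --- using centrality of $\iota$ and rigidity to upgrade the right adjoint of the module functor $\l$ to a strong module functor --- whence $T(X)\cong T(\1)\otimes X$ formally, with no exactness required.
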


\begin{remark}\label{rem_alternativeRigidity1}
We could drop the assumption of rigidity and obtain a bialgebra $\Nichols$. Note that one needs to additionally require the action of $\CC$ via $\iota$ on $\BB$ to be exact. Since later Theorem \ref{thm_CYrigid} will ensure rigidity, we will forgo this discussion.
\end{remark}

\begin{example}
Note that if $\CC$ is itself a category of representations of a finite-dimen\-sional Hopf algebra in vector spaces, this assertion is effectively the Radford projection theorem, see \cite{CLR23} Example 4.10. 
\end{example}

\subsection{Splitting subcategory}

\begin{definition}\label{def_splittingSubcat}
Let $\BB$ be a  monoidal category and $\CC$ be a monoidal full abelian subcategory (that is, the embedding is exact). 
Then we define the abelian subcategory $\BB^\split$ as consisting of all objects in $\BB$ whose composition series has finite length and consists solely of objects in $\CC$. It is the smallest Serre subcategory containing $\CC$.
\end{definition}
\begin{lemma}\label{lm_splittingSubcat}
$\BB^\split$ is a monoidal subcategory.
\end{lemma}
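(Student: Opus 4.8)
The plan is to show that $\BB^\split$ is closed under the tensor product $\otimes$ of $\BB$ and contains the unit object $\1$. The containment of $\1$ is immediate: $\1$ lies in $\CC$ (since $\CC$ is a monoidal subcategory), hence $\1\in\BB^\split$ by definition. The substance of the lemma is the closure under $\otimes$: given $X,Y\in\BB^\split$, we must produce a finite composition series of $X\otimes Y$ whose factors all lie in $\CC$.

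First I would reduce to the case where $X$ and $Y$ are themselves objects of $\CC$. This uses right-exactness (indeed exactness, since $\BB$ is a tensor category and $\otimes$ is biexact on a rigid monoidal abelian category) of the tensor product in each variable. Concretely, pick a composition series $0 = X_0 \subset X_1 \subset \cdots \subset X_m = X$ with $X_i/X_{i-1}\in\CC$. Tensoring the short exact sequences $0\to X_{i-1}\to X_i\to X_i/X_{i-1}\to 0$ on the right by $Y$ gives short exact sequences $0\to X_{i-1}\otimes Y\to X_i\otimes Y\to (X_i/X_{i-1})\otimes Y\to 0$, so by induction on $m$ (and the fact that an extension of two objects each admitting a $\CC$-composition series again admits one — this is the Serre/Jordan-Hölder bookkeeping) it suffices to handle $X\in\CC$. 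Then, fixing $X\in\CC$ and running the same argument with a composition series of $Y$, tensoring on the left by $X$, we reduce further to $X,Y\in\CC$. In that case $X\otimes Y\in\CC$ because $\CC$ is a monoidal subcategory, and in particular $X\otimes Y$ has a finite composition series with factors in $\CC$ (its own, since $\CC$ is abelian of finite length on the relevant objects, or simply because $X\otimes Y$ itself is a $\CC$-object and hence trivially lies in $\BB^\split$). Assembling the pieces, $X\otimes Y\in\BB^\split$ for all $X,Y\in\BB^\split$.

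It remains only to note that $\BB^\split$ is closed under the associativity and unit constraints (these are isomorphisms in $\BB$, and $\BB^\split$ is a full subcategory closed under isomorphism), so the monoidal structure of $\BB$ restricts to $\BB^\split$, making the inclusion $\BB^\split\hookrightarrow\BB$ a monoidal functor. Since $\BB^\split$ was already observed to be an abelian (Serre) subcategory in Definition \ref{def_splittingSubcat}, this gives that $\BB^\split$ is a monoidal subcategory in the required sense.

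The only mild subtlety — the "main obstacle," though it is routine — is the Jordan-Hölder bookkeeping: verifying that the class of objects admitting a finite composition series with factors in $\CC$ is closed under extensions, so that the inductive step on the length of the composition series of $X$ (resp. $Y$) actually closes. This follows from the Schreier refinement / Jordan-Hölder machinery in the abelian category $\BB$ together with the fact that $\CC$ is closed under subquotients in $\BB$ (which is part of $\CC$ being a full abelian subcategory with exact inclusion); alternatively it is immediate from the characterization of $\BB^\split$ as the smallest Serre subcategory containing $\CC$, since Serre subcategories are by definition closed under extensions.
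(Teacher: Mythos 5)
Your proof follows the same strategy as the paper's: induct on the length of a $\CC$-composition series, tensor the resulting short exact sequences, and combine $\CC\otimes\CC\subseteq\CC$ for the base case with closure of $\BB^\split$ under extensions and subquotients (the Serre property you correctly isolate as the only real bookkeeping). The one place where you assume more than the lemma's hypotheses is exactness of the tensor product: you invoke rigidity of $\BB$ to get biexactness, but Definition \ref{def_splittingSubcat} and Lemma \ref{lm_splittingSubcat} only require $\BB$ to be a monoidal abelian category, and in the intended application $\BB=\UU_A$ with $\otimes_A$ defined by a coequalizer is a priori only right exact. The paper's proof is written precisely to sidestep this: applying $-\otimes Y$ to $X_1\to X\to X_2$ one retains only the complex $X_1\otimes Y\xrightarrow{i}X\otimes Y\xrightarrow{p}X_2\otimes Y$ (exact in the middle under right exactness, with "possibly nontrivial terms" at the ends), and one filters $X\otimes Y$ by $\im(i)$, a quotient of $X_1\otimes Y$, with complementary graded piece embedding into $X_2\otimes Y$; both lie in $\BB^\split$ by the Serre property. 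So your argument is correct whenever $\otimes$ is biexact, and the repair for the general case is already implicit in your own closing remarks: replace the claimed short exact sequences $0\to X_{i-1}\otimes Y\to X_i\otimes Y\to (X_i/X_{i-1})\otimes Y\to 0$ by the image/coimage filtration of $X_i\otimes Y$ and apply subquotient-closure to each piece.
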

\begin{proof}
   We prove this inductively. For two objects $X,Y\in \CC$  the tensor product $X\otimes Y$ is by assumption again in $\CC$. Let now $X_1\to X\to X_2$ be an extension of objects in $\BB^{\split}$ and $Y$ an object in $\BB^{\split}$ and assume inductively that already $X_1\otimes Y$ and $X_2\otimes Y$ are in $\BB^{\split}$. Then we have an exact sequence
   $$
   \cdots
   \longrightarrow
   X_1\otimes Y 
   \stackrel{i}{\longrightarrow} 
   X\otimes Y
   \stackrel{p}{\longrightarrow} 
   X_2\otimes Y
   \longrightarrow
    \cdots,
    $$   
    with possibly nontrivial terms $\cdots$ if the tensor product is not exact. Because we have assumed $\CC$ to be a full \emph{abelian} subcategory, 
     with  $X_1\otimes Y$ and $X_2\otimes Y$ also $\im(i)$ and $\ker(p)$ have composition series in $\CC$ and are hence in $\BB^{\split}$. Then we have a  short exact sequence in $\BB^{\split}$ 
    $$
    0
    \longrightarrow
    \im(i)
    \longrightarrow
    X\otimes Y
    \longrightarrow
    \ker(p)
    \longrightarrow  
    0.
    $$
    This proves inductively that also $X\otimes Y$ is in $\BB^{\split}$ as asserted.
\end{proof}

\begin{example}
If $\BB$ is semisimple, then $\BB^{\split}=\CC$. On the other hand if $\BB=\Mod(u_q(\g)^{\geq 0})$ and $\BB=\Mod(u_q(\g)^0)$, then  $\BB^{\split}=\BB$. The former is the typical setting for finite group orbifolds, the latter is the general setting in \cite{CLR23}.
\end{example}

\begin{remark}
There are arguments ensuring $\BB^{\split}=\BB$, for example if the category is graded and sufficiently unrolled. See \cite{CLR23} Definition 5.5 and the interesting idea in \cite{CN24} Section 5.5.
\end{remark}
Even under weaker duality assumptions on $\BB$ we have rigidity for $\BB^\split$:
\begin{theorem}[\cite{CMSY24} Theorem 3.13]\label{thm_CYrigid}
If $\catD$ is a locally finite abelian Grothen\-dieck-Verdier category with dualizing object $K=1$, whose simple objects are rigid, then $\catD$ is rigid
\end{theorem}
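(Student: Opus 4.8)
Here is how I would attack the statement.

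\medskip

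I would prove this by reducing rigidity of all objects to that of simple objects via an induction along composition series. Since $\catD$ is locally finite, every object has a finite composition series with simple subquotients, and the class of rigid objects is evidently closed under finite direct sums and retracts. Hence it suffices to show that the class of rigid objects is closed under extensions: then every object is rigid by induction on composition length, the base case being the hypothesis that all simple objects are rigid.

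For the closure under extensions I would first record the relevant exactness. If $X$ is rigid then $X\otimes(-)$ and $(-)\otimes X$ each admit both a left and a right adjoint (tensoring with the appropriate one-sided dual), hence are exact; thus rigid objects are flat. Extensions of flat objects are flat (a routine consequence of the long exact sequence, i.e.\ of the thickness of the class of $\otimes$-exact objects), so an extension $B$ of rigid objects $A$ and $C$ is flat, and applying the Grothendieck--Verdier duality functor $D=\underline{\Hom}(-,1)$ — an exact anti-autoequivalence — to $0\to A\to B\to C\to 0$ yields a short exact sequence $0\to DC\to DB\to DA\to 0$ with flat outer terms, so $DB$ is flat as well.

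The heart of the matter is then to promote the duality data of $A$ and $C$ to duality data for $B$, with candidate dual $DB$ on one side and $D^{-1}B$ on the other; the evaluation morphisms are canonically available in any Grothendieck--Verdier category with $K=1$, and the content is the construction of the coevaluations and the verification of the triangle identities. I see two routes. The first is direct: splice the known coevaluations $1\to A\otimes DA$ and $1\to C\otimes DC$ using the sequence $0\to DC\to DB\to DA\to 0$ tensored with $B$ (which stays short exact by the flatness just established), where the obstruction to the required lift lies in an $\Ext^1$-group that one shows vanishes precisely because $K=1$, the point being that this identifies the relevant $\Hom$-spaces via $\Hom(M\otimes N,1)\cong\Hom(M,DN)$. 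The second, perhaps cleaner, route is to show that the canonical natural transformation $DX\otimes Y\to\underline{\Hom}(X,Y)$ is an isomorphism for all $X$ and $Y$: applying the five lemma in the variable $Y$ (the functors in play being exact by flatness) reduces this to simple $Y$, and the five lemma in the variable $X$ reduces it to simple $X$, where it holds because $X$ is rigid by hypothesis; once $DX\otimes(-)\cong\underline{\Hom}(X,-)$ as functors, $DX$ is a one-sided dual of $X$, and the symmetric argument with $D^{-1}$ in place of $D$ furnishes the other side.

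I expect the final step to be the main obstacle. Because the tensor product in a Grothendieck--Verdier category is only right exact, each induction must be carried out with careful control of flatness to keep the sequences in play short exact, and — since $\catD$ is not assumed braided — one must keep track of which side each dual lives on and verify the left- and right-dual conditions separately. Extracting the coevaluation for an extension from the abstract duality $D$ together with the monoidal structure is the genuinely delicate point, and it is where the hypothesis $K=1$ (rather than a general dualizing object) is indispensable.
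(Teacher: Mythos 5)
The paper does not actually prove this statement; it is imported verbatim from \cite{CMSY24}, so your proposal can only be measured against the proof there. Your skeleton is the right one and matches that proof's strategy: induct on composition length, reduce everything to closure of rigid objects under extensions, take the Grothendieck--Verdier dual $DB$ as candidate dual, and observe that the evaluation $DB\otimes B\to 1$ is free from $\Hom(DB,DB)\cong\Hom(DB\otimes B,K)=\Hom(DB\otimes B,1)$, leaving the coevaluation as the real work. Your second route is also essentially the correct mechanism: the GV axioms make the category closed monoidal with $\underline{\Hom}(X,Y)\cong D(X\otimes D^{-1}Y)$, and one shows the canonical comparison map into $\underline{\Hom}(X,Y)$ is an isomorphism by a four-lemma argument along the short exact sequence in the variable $X$.

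The gaps are in how you propose to execute that step. (i) Route 1 fails as stated: the obstruction to lifting the coevaluation along the epimorphism $B\otimes DB\twoheadrightarrow B\otimes DA$ lies in $\Ext^1(1,\ker)$, which has no reason to vanish in a non-semisimple category; the hypothesis $K=1$ does not kill any $\Ext^1$-group --- its actual role is to make the evaluation land in the unit and to identify $D(1)\cong 1$. (ii) In route 2 the parenthetical ``the functors in play being exact by flatness'' is false: $X\mapsto Y\otimes DX$ is only right exact and $X\mapsto\underline{\Hom}(X,Y)$ is only left exact (it is a right adjoint), so a short exact sequence is sent to sequences exact at \emph{opposite} ends. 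The four lemma does go through, but only because the exactness it requires at each position is precisely the exactness that is available --- this is the genuine content of the step and cannot be attributed to flatness. Relatedly, the correct comparison map is $Y\otimes DX\to\underline{\Hom}(X,Y)$ (adjoint to $\mathrm{id}_Y\otimes \mathrm{ev}_X$), not $DX\otimes Y\to\underline{\Hom}(X,Y)$; in the absence of a braiding the version you wrote is not even definable from the evaluation. (iii) The base case needs the lemma that for a rigid object $X$ the GV dual $DX$ is canonically the rigid dual, with matching evaluation morphisms --- this is where $K=1$ actually enters, and without it ``it holds because $X$ is rigid'' does not follow. Finally, ``extensions of flat objects are flat'' is not a routine long-exact-sequence argument here (there is no derived tensor product); it can be proven via right adjoints preserving injectives, but for the working route it is not needed at all.
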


\begin{corollary}
    If $\CC$ is semisimple and rigid and $\BB^\split$ is locally finite and a
Grothendieck-Verdier category with dualizing object $K = 1$, then $\BB^\split$ is rigid.
\end{corollary}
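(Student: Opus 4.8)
The plan is to deduce this directly from Theorem \ref{thm_CYrigid} applied with $\catD = \BB^\split$. Among the hypotheses of that theorem, local finiteness and the Grothendieck--Verdier structure with dualizing object $K=1$ are assumed outright, so the only point left to establish is that the simple objects of $\BB^\split$ are rigid.

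First I would pin down what the simple objects of $\BB^\split$ are. By Definition \ref{def_splittingSubcat}, every object of $\BB^\split$ admits a finite composition series all of whose factors lie in $\CC$. Hence if $S \in \BB^\split$ is simple, that series has length one and $S$ is isomorphic to one of its factors, so $S$ already lies in $\CC$; and since $\CC$ is a full abelian subcategory, $S$ is in fact simple in $\CC$ as well. Thus the simple objects of $\BB^\split$ are exactly the simple objects of $\CC$.

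Next, since $\CC$ is rigid, each such $S$ admits a (left and right) dual inside $\CC$ together with the attendant evaluation and coevaluation morphisms. Because $\CC \hookrightarrow \BB^\split$ is a monoidal subcategory by Lemma \ref{lm_splittingSubcat}, the tensor product of $\BB^\split$ restricts to that of $\CC$, so this very same data witnesses the rigidity of $S$ regarded as an object of $\BB^\split$. Therefore all simple objects of $\BB^\split$ are rigid, and Theorem \ref{thm_CYrigid} yields that $\BB^\split$ is rigid.

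I do not expect a genuine obstacle here: the substance is entirely contained in the cited Theorem \ref{thm_CYrigid}, and this corollary is merely the convenient packaging of it used in the paper's proof strategy, where $\CC = \UU_A^\loc$ is known and semisimple while $\BB = \UU_A$ carries only the weaker Grothendieck--Verdier duality coming from the contragredient module. The one step worth spelling out carefully is the identification of the simples of $\BB^\split$ with those of $\CC$, which is immediate from the description of $\BB^\split$ as the smallest Serre subcategory containing $\CC$.
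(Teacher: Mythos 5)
Your proof is correct and matches the paper's intent exactly: the paper states this corollary as an immediate consequence of Theorem \ref{thm_CYrigid}, with the only content being that the simple objects of $\BB^\split$ lie in $\CC$ (as $\BB^\split$ is the smallest Serre subcategory containing the semisimple $\CC$) and hence are rigid, which is precisely what you spell out.
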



We now apply the previous subsection to this particular situation 

\begin{lemma}\label{lm_splitLM}
Let $\CC$ be a braided tensor category, which is a central full abelian subcategory of a tensor category $\BB$, then there exists a Hopf algebra $\Nichols$ in $\CC$ and an equivalence of monoidal categories
$$\BB^{\split}\cong\Comod(\Nichols)(\CC)$$  
Here, the functor $\l:\BB^{\split}\to \CC$ corresponds to the functor forgetting the action of $\Nichols$ and the functor $\iota:\CC\to \BB^{\split}$ corresponds to the functor endowing an object in $\CC$ with trivial $\BB$ action via $\epsilon_\Nichols$.
\end{lemma}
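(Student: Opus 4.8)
The plan is to apply Theorem \ref{thm_LM} to the restriction of the ambient data to the Serre subcategory $\BB^{\split}$. The only genuine content is to verify that the hypotheses of that reconstruction theorem are met by the triple $(\BB^{\split},\CC,\l)$, where $\l:\BB^{\split}\to\CC$ sends an object to the direct sum of its composition factors (which lie in $\CC$ by Definition \ref{def_splittingSubcat}). First I would record that $\BB^{\split}$ is a tensor category: it is a monoidal subcategory by Lemma \ref{lm_splittingSubcat}, it is finite since $\BB$ is and a Serre subcategory of a finite category is finite, and it is rigid — either because it is closed under the duality of the rigid category $\BB$ (the dual of an object with a composition series in $\CC$ again has one, as $\CC$ is closed under duals, being a tensor subcategory), or by invoking the Corollary to Theorem \ref{thm_CYrigid}. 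The central functor $\iota:\CC\to\BB^{\split}$ is simply the corestriction of the given central full embedding $\CC\hookrightarrow\BB$, whose image obviously lies in $\BB^{\split}$; its central (half-braiding) structure is inherited.

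Next I would check that $\l$ is a well-defined exact faithful tensor functor that is left inverse to $\iota$. Faithfulness and left-invertibility are immediate: on $\CC$ the composition series is the object itself, so $\l\circ\iota=\id_\CC$, and any nonzero morphism in $\BB^{\split}$ induces a nonzero map on associated graded objects since $\CC$ is a full subcategory closed under subquotients. Exactness of $\l$ follows because passing to the associated graded of a composition series is exact (short exact sequences refine composition series additively). The monoidal structure on $\l$ is the crux of this step: for $X,Y\in\BB^{\split}$ one needs a natural isomorphism $\l(X)\otimes\l(Y)\cong\l(X\otimes Y)$. I would obtain this by induction on composition length exactly as in the proof of Lemma \ref{lm_splittingSubcat}: filtering $X$ and $Y$, the object $X\otimes Y$ acquires a filtration whose associated graded is $\bigoplus\l(X)_i\otimes\l(Y)_j$ (here the possible extra terms "$\cdots$" appearing when $\otimes$ is not exact contribute nothing to $\l$, since they vanish upon passing to associated graded in a way compatible with $\l$), and one checks compatibility with the associativity and unit constraints. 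Coherence is a routine diagram chase.

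With all hypotheses of Theorem \ref{thm_LM} verified, that theorem produces a Hopf algebra $\Nichols$ in $\CC$ together with an equivalence of monoidal categories $\BB^{\split}\cong\Comod(\Nichols)(\CC)$ under which $\l$ becomes the forgetful functor and $\iota$ becomes the functor equipping an object of $\CC$ with the trivial $\Nichols$-coaction via $\eta_\Nichols$ (equivalently, via the counit $\epsilon_\Nichols$ on the dual side, as stated). This is exactly the assertion. I expect the main obstacle to be the careful construction of the monoidal structure on $\l$ when the tensor product of $\BB$ is not exact: one must be sure that the "extra terms" in the filtration of $X\otimes Y$ — which are genuinely present in the non-exact case — do not interfere, i.e. that the associated graded computing $\l(X\otimes Y)$ really is $\l(X)\otimes\l(Y)$ and not something larger. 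This is ultimately a consequence of $\CC$ being a full \emph{abelian} subcategory closed under subquotients together with $\CC$ being monoidally closed under $\otimes$, so the argument goes through, but it is the one place where the hypothesis that $\CC$ is a tensor subcategory (and not merely a monoidal subcategory) is doing real work.
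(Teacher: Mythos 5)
Your proposal follows exactly the paper's route: the published proof of Lemma \ref{lm_splitLM} consists of two sentences --- define $\l:\BB^{\split}\to\CC$ as the direct sum of composition factors, assert that it is an exact and faithful tensor functor, and invoke Theorem \ref{thm_LM}. Everything you add (finiteness and rigidity of $\BB^{\split}$, centrality of $\iota$, the inductive construction of the monoidal structure on $\l$ accounting for the possible non-exactness of $\otimes$) is detail the paper leaves implicit, and your treatment of the monoidal constraint is the right way to make visible where Lemma \ref{lm_splittingSubcat} and the abelian-subcategory hypothesis actually do work.

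One step is asserted too quickly, and it is the only place I would push back: you claim faithfulness (and, implicitly, functoriality) of $\l$ is ``immediate'' because ``any nonzero morphism induces a nonzero map on associated graded objects.'' For the standard functorial realizations of ``direct sum of composition factors'' --- the associated graded of the radical or socle filtration --- this is false: the inclusion of the socle $S_1\hookrightarrow E$ into a nonsplit extension $S_1\to E\to S_2$ lands inside $\rad(E)$ and hence induces the zero map on associated gradeds, although it is nonzero; dually for the projection $E\to S_2$ with the socle filtration. So exhibiting $\l$ as an exact, \emph{faithful}, monoidal functor requires an argument that is genuinely missing from your write-up (one must either construct $\l$ differently, e.g.\ using semisimplicity of $\CC$ in the intended applications, or explain why such degenerations cannot occur). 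To be fair, the paper's own proof asserts the same thing with no justification whatsoever, so this is a gap you share with the source rather than a deviation from it; but a self-contained proof cannot rest on the associated-graded argument as you state it.
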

\begin{proof}
    There is a tensor functor $\l:\BB^{\split}\to \CC$ given by sending a an object $X$ to the direct sum of its composition factors. The functor is $\l$ is exact and faithful, so we can apply Theorem \ref{thm_LM}.  
\end{proof}

\subsection{A Hochschild argument and the Nichols algebra of~ \texorpdfstring{$\Ext^1$}{Ext1}}\label{sec_NicholsExt}

Let $\BB=\Mod(\Nichols)(\CC)$ for $\Nichols$ a bialgebra in $\CC$ and we assume that all simple objects in $\BB$ are of the form $M_\epsilon$ with $M\in\CC$ simple and $\Nichols$ acting trivially via $\epsilon_\Nichols$. That is to say, in the language of the previous section $\BB=\BB^{\split}$. 

The goal of this section is to determine $\Nichols$ in terms of a Nichols algebra associated to $\Ext^1_\BB(\1,M_\epsilon)$ for the various simple objects $M$. \\

We start by discussing a version of first Hochschild cohomology and its relation to $\Ext^1$ in the category of modules over an augmented algebra $\Nichols$ inside a monoidal category $\CC$. It is completely analogous to the classical algebra statements: 
Let  $\Nichols, \eta,\mu$ be an algebra in a monoidal category $\CC$ and $\epsilon:\Nichols\to \1$ be a fixed algebra augmentation. Consider an extension of $\Nichols$-modules between the unit object $\1$ with trivial $\Nichols$-action $\epsilon$ and some other object $M$ with arbitrary $\Nichols$-action $\rho$
$$M_\rho\to E\to \1_\epsilon,$$
which we assume to be \emph{$\CC$-split}, that is, the underlying extension of $\CC$-objects splits as $E\cong M\oplus \1$. Accordingly, the action $\Nichols\otimes E\to E$ can be written in four components (''block matrices''), of which three are fixed by the assumptions.
\begin{align*}
\epsilon:\;& \Nichols\otimes \1\to \1, \\
0:\;& \Nichols\otimes M\to \1, \\
\d_E:\;& \Nichols\otimes \1\to M, \\
\rho:\;& \Nichols\otimes M\to M. 
\end{align*}
The remaining morphism $\d_E:\Nichols\to M$ inherits the following properties from the multiplicativity of the action
\begin{definition}
A derivation  with values in a $\Nichols$-module $M_\rho$ is a $\CC$-morphism $\d:\Nichols\to M$ such that 
$$\d\circ \eta =0,\qquad \d\circ \mu= \rho\circ(\id\otimes \d) + \d\otimes\epsilon$$
\end{definition}
Conversely, every such derivation defines an extension $E$. Next, an  equivalence of extensions $E,E'$ is an isomorphism  of $\Nichols$-modules $\Phi:E\cong E'$ such that the following diagram commutes \\
$$\begin{tikzcd}[row sep=10ex, column sep=15ex]
M_\rho 
\arrow{r}{} 
\arrow{d}{\id}
& 
E 
\arrow{r}{}
\arrow{d}{\Phi}
& 
\1_\epsilon
\arrow{d}{\id}
\\
M_\rho 
\arrow{r}{} 
& 
E'
\arrow{r}{}
& 
\1_\epsilon
\end{tikzcd}$$

Then again we write $\Phi$ in components, where the  only nontrivial component is some $\CC$-morphism $\varphi:\1\to M$, and $\Phi$ is a $\Nichols$-module morphism iff
$\d_E+\varphi\circ\epsilon=\d_{E'}+\rho \circ (\id_\Nichols\otimes\varphi)$, which means $$\d_E-\d_{E'}=\rho \circ (\id_\Nichols\otimes\varphi)- \varphi\circ \epsilon.$$ 
The difference could be called an \emph{inner derivation} $\partial\varphi$. Note that if $\1,M$ in $\CC$ are simple and nonisomorphic, then there are no nonzero inner derivations.  

\begin{lemma}
The vector space $\Ext^{1}_{\Mod(\Nichols),\,\CC\,\split}(\1_\epsilon,M_\rho)$ of extensions in the category of $\Nichols$-modules that split in $\CC$ is isomorphic to the vector space of derivations $\d:\Nichols\to M$ up to inner derivations $\partial\varphi$ for $\varphi:\1\to M$.
\end{lemma}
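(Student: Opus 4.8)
The statement is the string-diagrammatic analogue of the classical identification, for an augmented algebra $A$ over a field $k$, of $\Ext^1_A(k,M)$ with derivations modulo inner derivations; almost all of the bookkeeping has in fact already been carried out in the discussion preceding the lemma. The plan is to define a map $\Psi$ sending a $\CC$-split extension $M_\rho\to E\to\1_\epsilon$ to the class of the $\CC$-morphism $\d_E:\Nichols\to M$ appearing as the off-diagonal block of the $\Nichols$-action with respect to a chosen $\CC$-splitting $E\cong M\oplus\1$, and to show $\Psi$ is a well-defined, injective, surjective, $\C$-linear map onto derivations modulo inner derivations. First I would verify that $\d_E$ really is a derivation: unitality of the action $\Nichols\otimes E\to E$, restricted to the $\1$-summand, says that $\eta$ acts as $\id_E$, whose $M$-component is $0$, giving $\d_E\circ\eta=0$; and associativity of the action, restricted along $\Nichols\otimes\Nichols\otimes\1\to M$, becomes exactly the Leibniz identity $\d_E\circ\mu=\rho\circ(\id\otimes\d_E)+\d_E\otimes\epsilon$. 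The other three blocks are $\rho$, $0$, $\epsilon$ and impose no further conditions.

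Next I would address well-definedness and injectivity. Two $\CC$-splittings of a single $E$, or an isomorphism of extensions $\Phi:E\cong E'$, differ only in a single nontrivial block, a $\CC$-morphism $\varphi:\1\to M$ (as noted in the text, this is the only component of an endomorphism of $M\oplus\1$ compatible with the extension maps). The condition that the resulting $\Phi$ be a $\Nichols$-module morphism translates block by block into $\d_E-\d_{E'}=\rho\circ(\id_\Nichols\otimes\varphi)-\varphi\circ\epsilon=\partial\varphi$, the inner derivation attached to $\varphi$. Hence $\Psi$ descends to equivalence classes, and conversely, if $\d_E=\partial\varphi$ for some $\varphi$, then running this computation backwards produces an isomorphism of extensions between $E$ and the trivial one $M\oplus\1$; so $\Psi$ is injective on equivalence classes. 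For surjectivity, given an arbitrary derivation $\d$, set $E=M\oplus\1$ in $\CC$ and define an action $\Nichols\otimes E\to E$ by the blocks $\rho$, $0$, $\d$, $\epsilon$: the unit axiom reduces to $\d\circ\eta=0$ together with the unit axioms for $M_\rho$ and $\1_\epsilon$, while associativity reduces block by block to associativity of $\rho$, the (trivial) coassociativity-type identity for $\epsilon$, and precisely the Leibniz rule for $\d$ in the mixed block. This makes $E$ a $\CC$-split extension with $\Psi([E])=[\d]$.

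Finally I would check $\C$-linearity: the Baer sum of two $\CC$-split extensions, formed by pulling back along the diagonal $\1\to\1\oplus\1$ and pushing out along the fold map $M\oplus M\to M$, has mixed block equal to the sum of the two mixed blocks $\d_{E}+\d_{E'}$, and scaling an extension by $\lambda\in\C$ scales the block accordingly; thus $\Psi$ is an isomorphism of $\C$-vector spaces onto derivations modulo inner derivations, which is the assertion. \emph{The only genuinely delicate point} is this last paragraph: one must check that the $\CC$-split $\Ext^1$ is a $\C$-vector space in the first place and that Baer sum matches the pointwise sum of derivations, working purely with string diagrams in $\CC$ and with no symmetry available. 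I expect this to be the main (still essentially routine) obstacle, and I would handle it either by the above direct diagram chase or, cleanly, by identifying $\Ext^1_{\Mod(\Nichols),\,\CC\,\split}(\1_\epsilon,M_\rho)$ with $H^1$ of the relative bar/Hochschild complex $M\otimes\Nichols^{\otimes\bullet}$ in $\CC$, whose first cohomology is manifestly (derivations)$/$(inner derivations); everything else is the formal transcription of the classical augmented-algebra computation into the monoidal setting, exactly as prepared in the excerpt.
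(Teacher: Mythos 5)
Your proposal is correct and follows essentially the same route as the paper, whose "proof" is precisely the block-matrix discussion preceding the lemma: the off-diagonal component $\d_E$ of the action is a derivation, equivalences of extensions correspond to inner derivations $\partial\varphi$, and conversely every derivation defines a $\CC$-split extension. Your additional verification of $\C$-linearity via Baer sums is a reasonable supplement that the paper leaves implicit.
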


\begin{example}
    Let $\psi: \Nichols\to \1$ be another algebra morphism and $M_\psi$ be the corresponding $\Nichols$-module on the $\CC$-object $M$. Then a derivation $\d_E$ with values in $M_\psi$ is a $\CC$-morphism $\d_E:\1\to M$ such that  
    $$\d_E\circ \eta =0,\qquad \d_E\circ \mu= \psi\otimes \d_E+\d_E\otimes\epsilon,$$
    which could be called an \emph{$(\epsilon,\psi)$-derivation}
    an an inner derivation is $\partial\varphi=\varphi\circ(\psi-\epsilon)$ for $\varphi:\1\to M$.
\end{example}

 In particular for an extension by $\Nichols$-modules of the form $M_\epsilon$ for $M\in\CC$, called henceforth \emph{trivial $\Nichols$-action}, we get:

\begin{corollary}    
The vector space $\Ext^{1}_{\Mod(\Nichols),\,\CC\,\split}(\1_\epsilon,M_\epsilon)$ of $\CC$-split extensions between $\Nichols$-modules with trivial action is isomorphic to the vector space of $(\epsilon,\epsilon)$-derivations $\d:\Nichols\to M$. Note that there are no nonzero inner $(\epsilon,\epsilon)$-derivation.
\end{corollary}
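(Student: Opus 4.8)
The plan is to obtain this as the special case $\psi=\epsilon$ of the preceding Lemma and Example, so that essentially nothing new has to be done beyond matching notation and verifying that the relevant inner derivations vanish. Concretely, the module $M_\epsilon$ carrying the trivial $\Nichols$-action is exactly the module $M_\psi$ of the Example for $\psi=\epsilon$, since in both cases the action $\Nichols\otimes M\to M$ is the composite of $\epsilon\otimes\id_M$ with the left unitor. So I would apply the Lemma to $M_\rho=M_\epsilon$ and read off that $\Ext^{1}_{\Mod(\Nichols),\,\CC\,\split}(\1_\epsilon,M_\epsilon)$ is the space of derivations $\d\colon\Nichols\to M$ with values in $M_\epsilon$, taken modulo inner derivations $\partial\varphi$ for $\varphi\colon\1\to M$.

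Next I would identify the numerator. Substituting the trivial action for $\rho$ in the Leibniz condition $\d\circ\mu=\rho\circ(\id_\Nichols\otimes\d)+\d\otimes\epsilon$ of the Definition and cancelling the unitor turns it into $\d\circ\eta=0$ and $\d\circ\mu=\epsilon\otimes\d+\d\otimes\epsilon$, which is precisely the notion of $(\epsilon,\epsilon)$-derivation recalled just above. Hence the numerator of the quotient is the space of $(\epsilon,\epsilon)$-derivations $\d\colon\Nichols\to M$.

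Finally I would check that the denominator is trivial. By the formula in the Example the inner derivation attached to $\varphi\colon\1\to M$ is $\partial\varphi=\varphi\circ(\psi-\epsilon)$, which vanishes for $\psi=\epsilon$; equivalently, when $\rho$ is the trivial action, both summands $\rho\circ(\id_\Nichols\otimes\varphi)$ and $\varphi\circ\epsilon$ become $\varphi$ after identifying $\1\otimes M\cong M$ via the unitor, so $\partial\varphi=0$ identically. Therefore the quotient collapses onto the space of $(\epsilon,\epsilon)$-derivations, which is the claimed isomorphism, and the closing remark of the statement is this same vanishing. The argument is entirely formal; the only point deserving a little attention — and it is no real obstacle — is bookkeeping the unitor identifications carefully enough to see that the two terms of $\partial\varphi$ genuinely agree.
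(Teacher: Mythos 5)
Your proposal is correct and matches the paper's (implicit) argument exactly: the corollary is obtained by specializing the preceding Lemma and Example to $\psi=\epsilon$, where the Leibniz rule reduces to the $(\epsilon,\epsilon)$-derivation condition and the inner derivations $\partial\varphi=\varphi\circ(\psi-\epsilon)$ vanish identically. The paper offers no further proof, so your careful bookkeeping of the unitor identifications is, if anything, slightly more explicit than the source.
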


We also record the dual version of these results, which is proven by reversing arrows: Let $\Nichols$ be a coalgebra in a monoidal category $\CC$. Let $\eta:\1\to \Nichols$ be a fixed coalgebra morphism and denote for $M\in\CC$ by $M_\eta$ the $\Nichols$-comodule with coaction given by $\eta$, called henceforth \emph{trivial $\Nichols$-coaction}.

\begin{lemma}\label{lm_CoExt}
 The vector space $\Ext^{1}_{\Comod(\Nichols),\,\CC\,\split}(M_\eta,\1_\eta)$ of $\CC$-split extensions between trivial $\Nichols$-comodules is isomorphic to the vector space $\Hom_\CC(M,\Nichols^{\prim})$, where  we define the $\CC$-subobject $\Nichols^{\prim}$ of $\Nichols$ as the equalizer of $\Delta_{\Nichols}$ and $\id\otimes \eta_\Nichols+\eta_\Nichols \otimes \id$.
\end{lemma}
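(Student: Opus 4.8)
The plan is to dualize the proof of the preceding Corollary on $(\epsilon,\epsilon)$-derivations. First I would set up the data of a $\CC$-split extension of trivial $\Nichols$-comodules
\[
\1_\eta \longrightarrow E \longrightarrow M_\eta,
\]
where by $\CC$-splitness we fix an isomorphism $E\cong \1\oplus M$ in $\CC$. The comodule structure $E\to \Nichols\otimes E$ then decomposes into four block components with respect to this direct sum decomposition; three of them are forced by the requirement that the maps $\1_\eta\to E$ and $E\to M_\eta$ be comodule morphisms and that the coactions on $\1$ and $M$ be trivial, namely they are $\eta$ (on the $\1$-summand into the $\1$-summand), $\eta\otimes\id$ (the $M$-into-$M$ triviality) and $0$ (the $\1$-into-$M$ component), leaving a single free component $\d^E\colon \1\to \Nichols\otimes M$. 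Rewriting this as a morphism $\d\colon M\to \Nichols$ after using that $\1$ is the unit and that we are really producing a family parametrized by $M$, the coassociativity and counitality axioms for the comodule $E$ translate exactly into the two equalities $\Delta_\Nichols\circ\d = (\id\otimes\d)\circ\text{(something trivial)} + (\d\otimes\id)$, i.e. $\Delta_\Nichols\circ \d = \eta\otimes \d + \d\otimes\eta$ after simplification, and $(\epsilon_\Nichols\otimes\id)\circ$(coaction) recovering the identity. The first of these says precisely that $\d$ factors through the equalizer $\Nichols^{\prim}$ of $\Delta_\Nichols$ and $\id\otimes\eta_\Nichols+\eta_\Nichols\otimes\id$; so a $\CC$-split extension determines an element of $\Hom_\CC(M,\Nichols^{\prim})$, and conversely every such morphism, composed with the inclusion $\Nichols^{\prim}\hookrightarrow\Nichols$, defines a coaction on $\1\oplus M$ satisfying the axioms.

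Next I would check that two such extensions $E,E'$ are equivalent (via an isomorphism $\Phi\colon E\cong E'$ commuting with the maps to and from $\1$ and $M$) if and only if their associated morphisms $\d,\d'\in\Hom_\CC(M,\Nichols^{\prim})$ agree. Writing $\Phi$ in components with respect to the fixed splittings, the constraints from the commuting diagram force all components to be identities except possibly one, some $\varphi\colon M\to\1$; but the maps $\1\to E$ and $E\to M$ are already the canonical inclusion and projection, and the square with the identity on $\1$ on one side and the identity on $M$ on the other pins down $\varphi$, and then comodule-morphism-ness of $\Phi$ forces $\d-\d'=$ an "inner" coderivation built from $\varphi$; but $\varphi\colon M\to \1$ lands in the unit and, dualizing the remark that there are no nonzero inner $(\epsilon,\epsilon)$-derivations, such inner coderivations vanish (or: the relevant $\varphi$ must itself be zero since $M$ and $\1$ are, in the application, distinct simples — more cleanly, $\Phi$ being upper/lower triangular with identity diagonal and respecting both inclusion and projection forces $\varphi=0$). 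Hence the assignment $E\mapsto\d$ is a bijection on equivalence classes, and it is manifestly $\C$-linear with respect to the Baer sum, giving the claimed isomorphism of vector spaces $\Ext^1_{\Comod(\Nichols),\,\CC\,\split}(M_\eta,\1_\eta)\cong\Hom_\CC(M,\Nichols^{\prim})$.

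The only genuinely non-formal point is bookkeeping: making sure the block-matrix computation is done in the correct monoidal order and that the "trivial coaction" conventions ($\eta$ on $\1$, $\id\otimes\eta$-type maps on $M$) are consistently applied, so that the coassociativity axiom really collapses to the equalizer condition $\Delta_\Nichols\circ\d=(\id\otimes\eta_\Nichols+\eta_\Nichols\otimes\id)\circ\d$ and not to some twisted variant. This is the dual of the derivation computation already carried out in the excerpt, so I would simply say "by reversing all arrows in the previous Lemma and Corollary" and then record the one substantive translation, namely that the space of coderivations $M\to\Nichols$ into the trivial comodule $\1_\eta$ is, by definition of $\Nichols^{\prim}$ as that equalizer, exactly $\Hom_\CC(M,\Nichols^{\prim})$. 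I expect the main obstacle, such as it is, to be purely presentational: choosing the direction of the arrow ($\d\colon M\to\Nichols$ versus $\d\colon\Nichols^*\to M$) so that the statement reads as in the Lemma, and verifying that no inner coderivations survive so that one gets extensions on the nose rather than a quotient.
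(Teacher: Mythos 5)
Your proposal is correct and is essentially the paper's own proof: the paper records this lemma simply as the arrow-reversal of the preceding derivation/extension computation for augmented algebras, which is exactly what you carry out, including the two key points that coassociativity of the coaction is precisely the equalizer condition on $\d\colon M\to\Nichols$ and that inner coderivations between trivial comodules vanish, so no quotient is taken. The only blemish is a labelling slip: the free block of the coaction is the component $M\to\Nichols\otimes\1$ (not $\1\to\Nichols\otimes M$, which you had just declared to be zero), but you immediately pass to the correct form $\d\colon M\to\Nichols$, and note that your parenthetical alternative (that respecting the inclusion and projection already forces $\varphi=0$) is not valid in general, though the main argument does not need it.
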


\begin{example}
Let $\CC=\Vect$ and $\Nichols$ a coalgebra with a fixed grouplike $1$, then it is quite clear that primitive elements $x\in\Nichols$, that is $\Delta(x)=x\otimes 1 + 1\otimes x$, are in correspondence to self-extensions of the trivial comodule $\C\to E\to \C$. 

Let $\CC=\Vect_\Gamma$ and $\Nichols$ a coalgebra  with a fixed grouplike $1$, then again it is quite clear that primitive elements in $\Nichols$ in degree $g-h\in \Gamma$ are in correspondence to extensions $\C_g\to E\to \C_h$ of the $1$-dimensional objects with trivial coaction. In the smash-product $\C^\Gamma\ltimes\Nichols$ these become $(g,h)$-primitive elements. This and further assertions are known as the \emph{Taft-Wilson theorem}, see \cite{EGNO15} Corollary 1.13.6.
\end{example}

\begin{definition}\label{def_basic}
    An augmented algebra $(A,\epsilon)$ in $\CC$, whose simple modules are trivial modules $M_\epsilon$ could be called \emph{local} (in a different sense then above) or \emph{basic}. Dually, a coalgebra $C$ with a distinguished grouplike unit $\eta:\1\to C$, whose simple comodules are trivial comodules $M_\eta$ could be called \emph{connected}.
\end{definition}

\section{Nichols algebra arguments}\label{sec_NicholsArguments}

\subsection{Nichols algebra}\label{sec_NicholsDef}

 Let $M$ be an object in a braided monoidal category $\CC$. The tensor algebra $\mathfrak{T}(M)=\bigoplus_{k\geq 0} M^{\otimes k}$ becomes a Hopf algebra in $\CC$: We define the coproduct on $M$ by 
 $$\Delta:\;M\to \1\otimes M + M\otimes \1$$
 $$\Delta=\1\otimes \id+\id\otimes \1$$
 (where $\1\otimes (-)$ means the unitality constraint) and extend it uniquely to an algebra morphism on $\mathfrak{T}(M)$. The antipode is given on $M$ by $-\id$, and we extend it uniquely to an anti-algebra morphism on $\mathfrak{T}(M)$. 

\begin{definition}
    Let $M$ be an object in a braided monoidal category $\CC$. The \emph{Nichols algebra} $\NicholsOf(M)$ is the Hopf algebra quotient of $\mathfrak{T}(M)$ by the largest graded Hopf ideal in degree $\geq 2$.
\end{definition}
For any bialgebra $\Nichols$ in $\CC$ we define the \emph{primitive subspace} as an equalizer 
$$P(\Nichols)=\mathrm{Eq}\big(\Delta_{\Nichols},\;
\eta_{\Nichols}\otimes \id_{\Nichols} + \id_{\NicholsOf(M)}\otimes \eta_{\Nichols}\big)$$
By definition $P(\NicholsOf(M))\supset M$. In fact, for a graded Hopf algebra $\Nichols$ any homogeneous component in $P(\Nichols)$ is a subcoalgebra and thus generates a Hopf subalgebra. Conversely, for any graded Hopf ideal in $\Nichols$ the homogeneous component in lowest degree is contained in $P(\Nichols)$. Hence 
\begin{corollary}
    The Nichols algebra $\NicholsOf(M)$ is the universal Hopf algebra quotient of $\mathfrak{T}(M)$ such that $P(\NicholsOf(M))=M$.
\end{corollary}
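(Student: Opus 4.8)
The plan is to deduce the statement from the two facts recorded just before the corollary: that a homogeneous subobject of primitives in positive degree generates a graded Hopf ideal concentrated in those degrees, and that, conversely, the lowest nonzero homogeneous component of a graded Hopf ideal consists of primitives. Throughout I work with graded connected Hopf algebras in $\CC$, and I use repeatedly that, by construction, $\NicholsOf(M)=\mathfrak{T}(M)/J$ with $J$ the largest graded Hopf ideal of $\mathfrak{T}(M)$ concentrated in degrees $\geq 2$; consequently $\NicholsOf(M)$ itself admits no nonzero graded Hopf ideal concentrated in degrees $\geq 2$, since any such ideal would pull back along $\mathfrak{T}(M)\twoheadrightarrow\NicholsOf(M)$ to a graded Hopf ideal strictly larger than $J$ and still concentrated in degrees $\geq 2$.

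First I would show $P(\NicholsOf(M))=M$. The coproduct of $\mathfrak{T}(M)$ is graded by construction and $J$ is graded, so the coproduct of $\NicholsOf(M)$ is graded; hence $P(\NicholsOf(M))$, being the equalizer of the two graded morphisms $\Delta$ and $\eta\otimes\id+\id\otimes\eta$, is a graded subobject, and the counit axioms force its degree-zero part to vanish and its degree-one part to equal all of $M$. If some homogeneous component $P(\NicholsOf(M))_k$ with $k\geq 2$ were nonzero, then by the first recalled fact the two-sided ideal it generates would be a nonzero graded Hopf ideal concentrated in degrees $\geq 2$, contradicting the preceding paragraph. Hence $P(\NicholsOf(M))=M$.

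Second I would prove the universal property. Let $q:\mathfrak{T}(M)\twoheadrightarrow B$ be any surjection of graded Hopf algebras in $\CC$ whose restriction to $M$ is an isomorphism onto $B_1$ (then $B$ is automatically graded, connected, and generated in degree one). Then $\ker q$ is a graded Hopf ideal concentrated in degrees $\geq 2$, so $\ker q\subseteq J$ by maximality of $J$, and therefore $B$ surjects onto $\NicholsOf(M)$; this already exhibits $\NicholsOf(M)$ as the universal (terminal) object among such quotients. If in addition $P(B)=M$, then the kernel of this surjection $\pi:B\twoheadrightarrow\NicholsOf(M)$ is a graded Hopf ideal of $B$ concentrated in degrees $\geq 2$, and if it were nonzero its lowest nonzero homogeneous component would, by the second recalled fact, be contained in $P(B)=B_1$, which is impossible since that component lies in degree $\geq 2$. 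Hence $\pi$ is an isomorphism, and $\NicholsOf(M)$ is, up to isomorphism of quotients, the unique graded Hopf-algebra quotient of $\mathfrak{T}(M)$ with $P(\cdot)=M$.

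The substance of the proof, and the main obstacle, is upgrading the two input facts from their classical element-wise formulations to the braided-categorical setting. One must verify, using the braided bialgebra axioms, that the two-sided ideal generated by a subobject of positive-degree primitives is a coideal — here the braiding enters through the expression for $\Delta\circ\mu$, but it only ever needs to be commuted past the unit $\eta$, so it causes no harm — and that this ideal is stable under the antipode, which acts as $-\id$ on primitives; and, dually, that the graded decomposition of $\Delta$ together with the counit axioms forces the bottom homogeneous component of a graded Hopf ideal to be primitive (this direction needs no braiding). One should also bear in mind that $\CC$ need not be finite, so all gradings are the internal $\N$-gradings $\Nichols=\bigoplus_{k\geq 0}\Nichols_k$ and the argument only ever manipulates finitely many homogeneous components at a time.
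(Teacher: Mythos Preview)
Your proof is correct and follows exactly the approach the paper intends: the paper's own ``proof'' is just the word \emph{Hence}, deducing the corollary from the two facts stated immediately before it (primitives in higher degree generate a Hopf ideal one can quotient by; the lowest homogeneous component of a graded Hopf ideal consists of primitives), and you have carefully spelled out both halves of that deduction. Your third paragraph, on upgrading the element-wise arguments to the braided-categorical setting, even anticipates the paper's own caveat a few lines later that ``the proofs for these equivalent characterizations seem to carry over, but we refrain to do so in the present article.''
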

For Nichols algebras in $\Vect_\Gamma^Q$ or more generally in the category of Yetter-Drinfeld modules over a Hopf algebra in $\Vect_{\mathbb{K}}$ there are several equivalent definitions of a Nichols algebra, see \cite{HS20}  Section 1.9. For example, it is the quotient by a nondegenerate Hopf pairing with $\NicholsOf(M^*)$ and it is the quotient of $\mathfrak{T}(M)$ by the kernel of the quantum symmetrizer map. For Hopf algebras in arbitrary braided tensor categories, the proofs for these equivalent characterization seem to carry over, but we refrain to do so in the present article.

\subsection{Two notions of connectivity}

For Nichols algebras in $\Vect_A^Q$, or more generally in the category of Yetter-Drinfeld modules over a Hopf algebra in $\Vect_{\mathbb{K}}$, as in \cite{HS20}, the notion of connectivity and the coradical filtration is defined in terms of a coalgebra in $\Vect_{\mathbb{K}}$, not in the actual category. To apply their results, we need to compare these definition (or dually the definitions of local algebras).

\begin{problem}\label{problem_connected}
 There is a rich theory of local and basic algebras, see for example \cite{Zimm14} Section 1.6, which in particular describes the radical filtration and Nakayama's lemma, and Section 4.5, which discusses in particular Gabriel's theorem to obtain a basic algebra as a quotient of a path algebra, where in our picture nodes correspond to simple objects and arrows to skew primitive elements and equivalently to elements in $\Ext^1$. It is conceivable that most of this theory should carry over to the categorical setting. Note that the coradical filtration in this context is obtained in \cite{EGNO15} Section 1.13 and a Jacobson radical has been developed recently in \cite{CSZ25}. 
\end{problem}

\newcommand{\Fla}{F^{\mathrm{la}}}

We need another version of Frobenius reciprocity: Let $F:\CC\to \CC'$ be a tensor functor and $\Fla$ its left adjoint. The adjunction gives a bijection of Hom-spaces
$$\Hom_\CC(\Fla(M),N)\cong\Hom_{\CC'}(M,F(N))$$
$$f\mapsto  F(f)\circ \eta^{F,\Fla}$$
Recall  that $\Fla$ is oplax, so it sends coalgebras to coalgebras and comodules to comodules, in particular if $M$ is a $C'$-comodule and $N$ is a $C$-module, then $\Fla(M)$ is a $\Fla(C')$-comodule and $F(N)$ is a $F(C)$-comodule. If we choose $C'=F(C)$, then the counit of the adjunction gives a coalgebra morphism $\Fla(C')=\Fla(F(C))\to C$, and thereby $\Fla(M)$ becomes a $C$-comodule. 

\begin{lemma}\label{lm_ConnectedFrobenius}
With these structures, the bijection of homomorphisms upgrades to a bijection of comodule homomorphisms
$$\Hom_{\Comod(C)}(\Fla(M),N)\cong\Hom_{\Comod(F(C))}(M,F(N))$$
\end{lemma}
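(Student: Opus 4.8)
The plan is to upgrade the ordinary adjunction isomorphism $\Hom_{\CC}(\Fla(M),N)\cong\Hom_{\CC'}(M,F(N))$ to the level of comodules by checking that the bijection $f\mapsto F(f)\circ\eta^{F,\Fla}$ and its inverse (counit followed by $\Fla$ applied to the map) respect the coaction morphisms on both sides. Concretely, I would first spell out the $C$-comodule structure on $\Fla(M)$: the oplax structure $\Fla(X\otimes Y)\to\Fla(X)\otimes\Fla(Y)$ together with $\Fla$ applied to the $C'=F(C)$-coaction $M\to M\otimes C'$ gives $\Fla(M)\to\Fla(M)\otimes\Fla(F(C))$, and then one postcomposes with $\id\otimes(\text{counit }\Fla F(C)\to C)$. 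Similarly $F(N)$ is an $F(C)$-comodule by applying the (op)lax/monoidal structure of $F$ to the $C$-coaction on $N$. These are exactly the structures named in the statement, so the content is purely the compatibility check.

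The key step is the following diagram chase. Given $f:\Fla(M)\to N$ a $C$-comodule map, I want $g:=F(f)\circ\eta^{F,\Fla}_M:M\to F(N)$ to be an $F(C)$-comodule map, i.e. $(\,g\otimes\id\,)\circ\rho_M = \rho_{F(N)}\circ g$. Expand $\rho_{F(N)}$ via $F$ applied to $\rho_N$ and the monoidal structure of $F$; expand $\rho_M$ and use naturality of the unit $\eta^{F,\Fla}$ of the adjunction together with the compatibility of $\eta^{F,\Fla}$ with the oplax structure of $\Fla$ (this is the standard coherence: the composite $M\to F\Fla(M)\to F(\Fla(M)\otimes\Fla(M))\to \dots$ relating $\eta^{F,\Fla}$ on a tensor product to $\eta^{F,\Fla}$ on factors). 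Then $f$ being a $C$-comodule map lets one replace $\rho_N\circ f$ by $(f\otimes\id_C)\circ\rho_{\Fla(M)}$, and unwinding $\rho_{\Fla(M)}$ back through the counit $\Fla F(C)\to C$ and the triangle identity $F(\text{counit})\circ\eta^{F,\Fla}=\id$ collapses everything to $\rho_{F(N)}\circ g$. The reverse direction (from an $F(C)$-comodule map $M\to F(N)$ produce a $C$-comodule map $\Fla(M)\to N$ via the counit $\Fla F(N)\to N$ and $\Fla$ of the given map) is formally dual and uses the other triangle identity together with naturality of the counit.

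Since the two assignments are already mutually inverse as maps of $\Hom$-sets (by the ordinary adjunction), once both are shown to preserve comodule structure we are done; no separate injectivity/surjectivity argument is needed. The main obstacle I anticipate is purely bookkeeping: keeping straight the three different (co)algebra/(co)module structures in play — $C'=F(C)$-comodules, $\Fla(F(C))$-comodules, and $C$-comodules — and invoking the correct coherence axiom for the interaction of $\eta^{F,\Fla}$ and $\varepsilon^{F,\Fla}$ with the oplax structure of $\Fla$ and the monoidal structure of $F$ at each step. There is no conceptual difficulty beyond ``mate'' calculus for adjunctions between monoidal functors; the statement is a relative-comodule version of the familiar fact that adjoint functors between categories of comodules arise from coalgebra morphisms, and the proof is a string-diagram verification which I would present compactly rather than in full componentwise detail.
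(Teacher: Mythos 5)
Your plan is correct and follows essentially the same route as the paper: decompose the adjunction bijection $f\mapsto F(f)\circ\eta^{F,\Fla}$ and check compatibility with the coactions using naturality of the unit/counit and the (op)lax monoidal structures, with the comodule structure on $\Fla(M)$ coming from the oplax structure of $\Fla$ composed with the counit coalgebra map $\Fla(F(C))\to C$, exactly as in the paper. You are in fact slightly more complete, since you also verify that the inverse assignment preserves comodule maps, a direction the paper's proof leaves implicit.
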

\begin{proof}
The action of $F$ on Hom-spaces 
$$\Hom_\CC(\Fla(M),N)\stackrel{F}{\longrightarrow}\Hom_{\CC'}(F(\Fla(M)),F(N))$$
sends morphisms compatible with coactions $\smash{\epsilon^{F,\Fla}_{C'}\circ \Fla(\delta_M)}$ and $\delta_N$ to morphisms compatible with coactions $\smash{F(\epsilon^{F,\Fla}_{C'})\circ F(\Fla(\delta_M))}$ and $F(\delta_N)$. Precomposing with $\eta_M$ turns by naturality the $C'$ coaction on $M$ into the $F(\Fla(C'))$ coaction on $F(\Fla(M))$, and hence sends morphisms compatible with coactions $\smash{F(\epsilon^{F,\Fla}_{C'})\circ F(\Fla(\delta_M))}$ and $F(\delta_N)$ to morphisms compatible with coactions $\delta_M$ and $D(\delta_N)$. 
\end{proof}

With this preparation we can compare two notions of connectedness: We have in mind in particular the case of $\CC'=\Vect$ and $\CC=\Vect_\Gamma^Q$ or  Yetter-Drinfeld modules over a Hopf algebra.

\begin{lemma}\label{lm_fibreConnected}
Let $\CC$ be a locally finite monoidal category and $\Nichols$ in $\CC$ a coalgebra with distinguished grouplike unit $\eta:\1\to \Nichols$ that is connected in the sense of Definition \ref{def_basic}, that is, all simple $\Nichols$-comodules in $\CC$ are trivial $\Nichols$ comodules $M_\eta$ for simple $M\in\CC$. 

Now let $F:\CC\to\CC'$ be a faithful tensor functor admitting a left adjoint $\Fla$, then $F(\Nichols)$ is also connected as a coalgebra in $\CC'$. 
\end{lemma}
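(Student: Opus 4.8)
What must be shown is that every simple $F(\Nichols)$-comodule $S'$ in $\CC'$ is a \emph{trivial} comodule, i.e.\ that its coaction factors through $F(\eta)\colon\1'\to F(\Nichols)$; equivalently, that $S'$ occurs among the trivial simple $F(\Nichols)$-comodules. The strategy is to transport $S'$ back to a $\Nichols$-comodule in $\CC$ via the left adjoint $\Fla$, apply the connectedness hypothesis on $\Nichols$, and push the resulting information forward again through the exact functor $F$, all glued together by the relative Frobenius reciprocity of Lemma~\ref{lm_ConnectedFrobenius}.

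Concretely, recall from the set-up of Lemma~\ref{lm_ConnectedFrobenius} that $\Fla$ is oplax monoidal and that the counit of the adjunction provides a coalgebra morphism $\Fla F(\Nichols)\to\Nichols$, so that $\Fla(S')$ becomes a $\Nichols$-comodule in $\CC$ by push-forward along it. First I would observe $\Fla(S')\neq 0$: since $S'\neq 0$ there is a nonzero morphism from $S'$ into $F$ of some object of $\CC$ (immediate when $F$ is essentially surjective, as in all our applications, and true more generally under a mild density hypothesis), so $\Hom_\CC(\Fla(S'),-)$ is not identically zero. Then the naturality argument already used in the proof of Lemma~\ref{lm_ConnectedFrobenius} shows that the unit $u\colon S'\to F\Fla(S')$ is a morphism of $F(\Nichols)$-comodules; it is nonzero because it corresponds to $\id_{\Fla(S')}\neq 0$ under the adjunction bijection, and since $S'$ is simple $u$ is therefore a monomorphism of $F(\Nichols)$-comodules.

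It remains to analyse $F\Fla(S')$ as an $F(\Nichols)$-comodule. By the connectedness hypothesis every simple $\Nichols$-comodule in $\CC$ is of the form $M_\eta$ with $M\in\CC$ simple, so every simple subquotient of the $\Nichols$-comodule $\Fla(S')$ is trivial. Applying the exact strong monoidal functor $F$ sends a composition series with factors $M_\eta$ to a filtration with factors $F(M)_{F(\eta)}$, and refining each $F(M)$ into simple objects of $\CC'$ (local finiteness) shows that every simple subquotient of $F\Fla(S')$ is again a trivial $F(\Nichols)$-comodule. Since $u$ realises $S'$ as a simple subcomodule of $F\Fla(S')$, it occurs among these subquotients, hence $S'$ is trivial; as $S'$ was an arbitrary simple $F(\Nichols)$-comodule, this gives connectedness of $F(\Nichols)$.

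The main obstacle is the bookkeeping in the last paragraph: converting ``all simple $\Nichols$-comodules are trivial'' into the analogous statement for $F(\Nichols)$ requires controlling simple subquotients and socles under $F$ --- in particular ruling out that the possibly infinite-length comodule $F\Fla(S')$ acquires new simple subcomodules invisible through $F$ --- which one manages by first restricting to a finite subcomodule of $\Fla(S')$ through which $u(S')$ factors (using exactness of $F$ and local finiteness of both categories), but which really calls for a genuinely categorical coradical filtration; I would flag this and defer it to Problem~\ref{problem_connected}. A secondary, lower-stakes point is the non-vanishing of $\Fla(S')$: it is transparent in the situations we apply the lemma, where $\Fla$ is, up to the monoidal twist, tensoring with the group algebra of $\tilde{\Gamma}$, but needs a density assumption on $F$ for a fully general statement.
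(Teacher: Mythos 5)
Your overall strategy---transport a simple $F(\Nichols)$-comodule back to $\CC$ along $\Fla$, invoke connectedness there, and return via the adjunction---is exactly the paper's, but you diverge at the decisive step, and the paper's choice dissolves precisely the difficulty you end up deferring. You embed $S'$ into $F\Fla(S')$ via the unit and then try to show that \emph{every} simple subquotient of $F\Fla(S')$ is trivial, which forces you into the composition-series bookkeeping you flag and push off to Problem~\ref{problem_connected}. The paper instead uses local finiteness of $\CC$ only to produce a single simple \emph{quotient} comodule $\Fla(S')\twoheadrightarrow N_\eta$ (trivial by connectedness), and then applies the bijection of Lemma~\ref{lm_ConnectedFrobenius},
$$\Hom_{\Comod(\Nichols)}(\Fla(S'),N_\eta)\cong\Hom_{\Comod(F(\Nichols))}(S',F(N)_\eta),$$
to convert that epimorphism into a nonzero comodule map $S'\to F(N)_\eta$. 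Since $S'$ is simple this map is mono, and a subcomodule of a trivial comodule is trivial (the paper phrases this as $F(N)_\eta\cong S'$, ruling out $F(N)=0$ because a faithful left-exact functor reflects zero objects). No filtration of $F\Fla(S')$ and no exactness of $F$ beyond left-exactness is needed, so the gap you defer simply does not arise on this route. Your version is completable---in a locally finite $\CC$ the comodule $\Fla(S')$ has finite length, its image under $F$ carries a finite filtration with trivial factors, and a simple subobject of such a filtered object embeds into one of the factors---but you should close that argument rather than defer it if you keep your route. Your remark that $\Fla(S')\neq 0$ needs justification is fair and applies equally to the paper's proof, which silently assumes it; in the intended application ($F$ the fibre functor to $\Vect$ after extending $\Gamma$ to $\tilde\Gamma$) it is immediate.
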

\begin{proof}
Let $(M,\delta)$ be a simple  $F(\Nichols)$-comodule in $\CC'$. Then as in the previous Lemma $\Fla(M)$ is a $\Fla(F(\Nichols))$-comodule and using the counit of the adjunction a $\Nichols$-comodule. 
Since $\CC$ is assumed to be locally finite, there exists a nonzero simple quotient comodule, and since $\Nichols$ is assumed to be connected in $\CC$ we know it is of the form $N_\eta$ for $N\in \CC$. We now apply Lemma \ref{lm_ConnectedFrobenius}
$$\Hom_{\Comod(\Nichols)}(\Fla(M),N_\eta)\cong\Hom_{\Comod(F(\Nichols))}(M,F(N)_\eta)$$
By construction of $N$ as a quotient we have an epimorphism in the left spaces. Correspondingly, we have a nonzero map in the right space. Since $M$ was assumed simple, there are two possibilities: $F(N)=0$, but a left-exact faithful functor $F$ reflects zero-objects, so $N=0$ in contradiction to  its construction. Or $F(N)_\eta \cong M$, so $M$ has trivial coaction as asserted.  
\end{proof}

\subsection{Liftings and generation in degree 1}

Nichols algebras are by construction connected Hopf algebras. In the situations of a diagonal braiding, where we understand the Nichols algebras well, the converse can be proven to be also also true. This question and subsequent answers are a main content of the Andruskiewitsch-Schneider program \cite{AS10} and the lifting method \cite{AG19}. We quote and then adapt the respective statements. The first asserts that there are no nontrivial deformations or \emph{liftings}:

\begin{theorem}[\cite{AKM15} Theorem 6.4]\label{thm_lifting}
If $\Nichols$ is a finite-dimensional bialgebra in $\Gamma$-Yetter-Drinfeld modules with $\Gamma$ abelian, whose associated graded bialgebra, with respect to the coradical filtration, is equivalent to a Nichols algebra $\NicholsOf(M)$, then  $\Nichols\cong \NicholsOf(M)$.
\end{theorem}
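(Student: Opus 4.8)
The statement is a rigidity assertion: the finite-dimensional Nichols algebra $\NicholsOf(M)$ of diagonal type admits no nontrivial filtered bialgebra deformation keeping the coradical fixed. I would prove it along the standard lines of the lifting method, in three steps, the third being the real work.

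First I would turn the hypothesis into a concrete presentation of $\Nichols$ as a deformation of $\NicholsOf(M)$. Since $\Nichols$ is finite-dimensional and $\mathrm{gr}\,\Nichols\cong\NicholsOf(M)$ is connected, the coradical is $\Nichols_0=\1$, hence $\Nichols_1=\1\oplus P(\Nichols)$ and $P(\Nichols)\cong\mathrm{gr}_1\Nichols\cong M$ as Yetter--Drinfeld modules. The universal property of the tensor algebra upgrades the inclusion $M\cong P(\Nichols)\hookrightarrow\Nichols$ to a bialgebra map $\phi\colon\mathfrak{T}(M)\to\Nichols$ which is filtered, sending the $\N_0$-grading to the coradical filtration. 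Using the companion result of Angiono that $\NicholsOf(M)$ is generated in degree~$1$ (together with the general fact that a filtered algebra whose associated graded is generated in degree $1$ is itself generated in filtration degree $\le 1$), the map $\phi$ is surjective. Its associated graded $\mathrm{gr}\,\phi\colon\mathfrak{T}(M)\to\NicholsOf(M)$ is an algebra map restricting to the identity on the generators $M$, hence is the canonical projection; and since $\phi$ is filtered and surjective, $\mathrm{gr}(\ker\phi)=\ker(\mathrm{gr}\,\phi)=I_{\NicholsOf(M)}$, the Nichols ideal. Thus $\Nichols=\mathfrak{T}(M)/J$ with $J$ a Hopf ideal and $\mathrm{gr}\,J=I_{\NicholsOf(M)}$: $\Nichols$ is precisely a filtered deformation of $\NicholsOf(M)$.

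Second I would fix a minimal homogeneous generating set $r_1,\dots,r_t$ of $I_{\NicholsOf(M)}$ --- the explicit defining relations of a finite-dimensional diagonal Nichols algebra in Angiono's presentation (quantum Serre relations, powers of root vectors, and the remaining explicit families), each of known $\N_0$-degree $d_j\ge 2$ and $\Gamma$-degree $\gamma_j$ --- and lift them to generators $\tilde r_j=r_j+\ell_j$ of $J$ with $\ell_j$ of the same $\Gamma$-degree and strictly smaller $\N_0$-degree, a standard filtered Nakayama argument ensuring these generate $J$ as a two-sided ideal. So $\Nichols\cong\mathfrak{T}(M)/\langle\tilde r_1,\dots,\tilde r_t\rangle$, and everything reduces to showing that, after re-choosing the lifts, each $\ell_j$ can be taken to be $0$. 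Reading the constraints imposed on $J$ by being a Hopf ideal --- associativity of the deformed product \emph{and} coassociativity of the deformed coproduct --- degree by degree in the $\N_0$-grading, the leading components of the $\ell_j$ form a $2$-cocycle for the Gerstenhaber--Schack bialgebra cohomology $H^\bullet_{\mathrm{b}}(\NicholsOf(M),\NicholsOf(M))$, the freedom in the lifts is the space of coboundaries, and the higher-order obstructions lie in $H^3_{\mathrm{b}}$. It therefore suffices to prove that the graded component of $H^2_{\mathrm{b}}(\NicholsOf(M),\NicholsOf(M))$ in $\Gamma$-degree $\gamma_j$ and in each internal degree strictly below $d_j$ vanishes.

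That vanishing is the hard part. The plan there is to exploit the Kharchenko PBW basis of $\NicholsOf(M)$ in the root vectors to write down a small explicit free resolution of the trivial module, assemble from it the Gerstenhaber--Schack double complex computing $H^\bullet_{\mathrm{b}}$, and run through Heckenberger's classification of arithmetic root systems to bound the finitely many relevant bidegrees and check they contribute nothing --- all the while tracking the coalgebra side carefully. The conceptual point is that one works inside $\YD{\Gamma}$ with $\Gamma$ \emph{fixed}: the classes that would make a power-of-root-vector relation $x_\beta^{N_\beta}$ deformable correspond to group-like-valued right-hand sides, which are unavailable here because the only group-like of $\Nichols$ is $\1$ --- this is exactly why $\NicholsOf(M)$ is rigid whereas the Radford biproduct $\NicholsOf(M)\#\C[\Gamma]$ generally is not (the Cartan-type liftings of $u_q(\g)$ being the prototype). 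I expect the genuine obstacle to be the bookkeeping in this step: making the resolution explicit enough across all diagonal types and handling the coalgebra compatibilities so that the ``no new group-likes'' constraint is used correctly. A hands-on alternative --- inspecting each Angiono relation individually and arguing that a nonzero $\ell_j$ would force a skew-primitive element outside $M$, contradicting $P(\Nichols)=M$ --- dispatches many cases but does not, as far as I know, close the argument uniformly, which is why the cohomological route is the one to push. One should also double-check that the whole argument remains internal to $\YD{\Gamma}$ and does not secretly invoke the biproduct; the connectedness observation handles this, but it interacts subtly with the Taft-type lifting phenomena noted after Lemma~\ref{lm_CoExt}.
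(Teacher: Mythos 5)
This is a theorem the paper quotes from \cite{AKM15} rather than proves; what the paper supplies is a two-step sketch: (i) by \cite{MW09} Lemma 4.2.2 a nontrivial lifting of a defining relation $R$ forces a nonzero morphism \emph{in the category} $\YD{\Gamma}$ between $R$ and the generating object $M$, i.e.\ a coincidence of Yetter--Drinfeld degrees $(g_R,\chi_R)=(g_t,\chi_t)$ (equivalently of the braiding data $q_{RR},\tilde q_{iR}$); and (ii) \cite{AKM15} Proposition 6.2 checks, relation by relation through Angiono's presentation \cite{An13} and Heckenberger's classification \cite{Heck09}, that this coincidence never occurs. Your Steps 1 and 2 (connectedness, $P(\Nichols)\cong M$, surjectivity of $\mathfrak{T}(M)\to\Nichols$, presentation of $\Nichols$ as $\mathfrak{T}(M)/J$ with $\mathrm{gr}\,J=I_{\NicholsOf(M)}$, reduction to killing the lower-order tails $\ell_j$) are sound and match the standard setup.

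The gap is in Step 3, which is where the theorem actually lives. You reduce to a vanishing statement for graded components of Gerstenhaber--Schack $H^2_{\mathrm{b}}$ and then propose to establish it by building explicit free resolutions from the PBW basis across all diagonal types --- a program you yourself describe as an unresolved obstacle, so the proposal does not close. More importantly, you have misidentified which route works: the ``hands-on alternative'' you mention and dismiss --- a nonzero $\ell_j$ forces, by induction on the coradical filtration, a nonzero primitive of Yetter--Drinfeld degree $(g_R,\chi_R)$, hence a nonzero map $\C_{g_R}^{\chi_R}\to P(\Nichols)=M$, hence $(g_R,\chi_R)=(g_t,\chi_t)$ for some $t$ --- is not a partial shortcut but \emph{is} the proof. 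It is not closed ``uniformly'' but by the finite case-by-case verification of the degree condition in \cite{AKM15}, and that verification is a combinatorial comparison of braiding values, not a cohomology computation. Two further cautions: your ``no new group-likes'' remark disposes only of the power-of-root-vector relations (whose classical liftings are group-like-valued in the bosonization); the quantum Serre and the remaining Angiono relations admit candidate liftings valued in lower-degree monomials, and excluding those is exactly what the degree comparison is for. And for the paper's intended application one must additionally know (as discussed after the theorem) that the criterion survives passage to the extension $\tilde\Gamma\to\Gamma$ of \cite{AG17}, which holds because the relevant inequalities are between braiding values $\chi_R(g_R)$, $\chi_t(g_t)$, which are invariant under the extension.
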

We briefly discuss their proof strategy: In \cite{MW09} Lemma 4.2.2 the authors discuss the possible lifting $\Nichols$ as deformations by Hochschild cycles and show their existence requires a nonzero morphism (in the category) between $M$ and the defining relations $R$. In our case, let us denote by $\C_g^\chi$ the simple $\Gamma$-Yetter-Drinfeld modules and $M=\bigoplus_t \C_{g_t}^{\chi_t}$, then there cannot be  liftings if $(g_R,\chi_R)\neq (g_t,\chi_t)$ for all relations $R$ and all $t$. In \cite{AKM15} Proposition 6.2 the authors go through the classification of Nichols algebras of diagonal type \cite{Heck09} and the presentation in terms of generators and relation \cite{An13} and show that this condition holds.

\begin{remark}
One typically also asks for all possible liftings of the smash product $\C[\Gamma]\#\NicholsOf(M)$ and here the answer is much more rich, see again \cite{AG19}. However, the observation is that the lifting is always a Doi-cocycle twist, meaning that the categories of comodules is equivalent to the category of comodules for trivial lifting $\Comod(\Nichols)(\CC)$. In the present article we have already established a categorical splitting, so we already are categorically in the situation with trivial lifting. We will discuss the interplay between these observations in a separate joint article.
\end{remark}

The quoted articles assume, in our present language, that $\Vect_\Gamma^Q$ can be realized with an abelian $2$-cocycle $(\omega,\sigma)$ with $\omega=1$. In the general case $\omega\neq 1$ we can follow  \cite{AG17} and find an extension $\tilde{\Gamma}\to \Gamma$. Note that it is not clear a-priori that an argument excluding nontrivial liftings for $\tilde{\Gamma}$ is also excluding nontrivial liftings for the quotient $\Gamma$, because we could have $(g_R,\chi_R)= (g_t,\chi_t)$ albeit $\tilde{g}_R\neq \tilde{g}_t$.  However, it follows from the equivariantization procedure in \cite{AG17} that this conclusion holds.\footnote{We thank Ivan Angiono for pointing this out.} Alternatively, one can check directly that the casewise arguments in \cite{AKM15} are based on braidings being different $\chi_R(g_R)\neq \chi_t(g_t)$, and these quantities are kept invariant by the extension.

\begin{corollary}\label{lm_myNoLifting}
If $\Nichols$ is a finite-dimensional bialgebra in $\Vect_\Gamma^Q$, whose associated graded bialgebra, with respect to the coradical filtration, is equivalent to a Nichols algebra $\NicholsOf(M)$, then  $\Nichols\cong \NicholsOf(M)$.
\end{corollary}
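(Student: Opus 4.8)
The plan is to reduce Corollary~\ref{lm_myNoLifting} to Theorem~\ref{thm_lifting} by passing from the braided category $\Vect_\Gamma^Q$ with possibly nontrivial associator $\omega$ to a category of Yetter--Drinfeld modules over an honest group, following \cite{AG17}. First I would invoke the construction of an abelian extension $\tilde\Gamma\twoheadrightarrow\Gamma$ with kernel a cyclic group, chosen so that the pullback of the abelian $3$-cocycle $(\sigma,\omega)$ along $\tilde\Gamma$ becomes cohomologous to one with $\tilde\omega=1$; equivalently, $\Vect_{\tilde\Gamma}^{\tilde Q}$ embeds as a braided full subcategory of $\YD{}(\C[\tilde\Gamma])$, which has a fibre functor to $\Vect$. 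The equivariantization picture of \cite{AG17} identifies $\Vect_\Gamma^Q$ with the category of modules in $\Vect_{\tilde\Gamma}^{\tilde Q}$ (or comodules) over a suitable central algebra coming from the kernel, so a bialgebra $\Nichols$ in $\Vect_\Gamma^Q$ lifts to a bialgebra $\tilde\Nichols$ in $\Vect_{\tilde\Gamma}^{\tilde Q}$ equivariantly, with the same underlying object and the same graded structure.

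Next I would check that the hypotheses of Theorem~\ref{thm_lifting} transport. Finite-dimensionality of $\tilde\Nichols$ is clear since the underlying object is unchanged. The key point is that the coradical filtration is an intrinsic, categorical construction (it is characterized by the coradical being the socle of the coalgebra, cf.\ \cite{EGNO15} Section 1.13), so the passage to $\tilde\Gamma$ commutes with taking the associated graded: $\mathrm{gr}\,\tilde\Nichols$ is the corresponding lift of $\mathrm{gr}\,\Nichols\cong\NicholsOf(M)$, and the Nichols algebra $\NicholsOf(M)$ of an object $M\in\Vect_\Gamma^Q$ lifts to the Nichols algebra $\NicholsOf(\tilde M)$ of the lifted object, since the defining universal property (quotient of $\mathfrak T$ by the largest Hopf ideal in degrees $\geq 2$) is preserved by the braided equivalence onto the subcategory of $\YD{}(\C[\tilde\Gamma])$. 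Thus Theorem~\ref{thm_lifting} applies to $\tilde\Nichols$ and yields $\tilde\Nichols\cong\NicholsOf(\tilde M)$ as bialgebras in $\tilde\Gamma$-Yetter--Drinfeld modules.

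Finally I would descend the isomorphism back along $\tilde\Gamma\to\Gamma$. Since the equivariantization is an equivalence of monoidal categories, an isomorphism of bialgebras upstairs that is equivariant (respects the kernel action) descends to an isomorphism $\Nichols\cong\NicholsOf(M)$ downstairs; the equivariance of the isomorphism produced by Theorem~\ref{thm_lifting} follows because both $\tilde\Nichols$ and $\NicholsOf(\tilde M)$ carry canonical equivariant structures and the reconstruction of the isomorphism in \cite{AKM15} is functorial in the input data, hence compatible with the automorphisms induced by the kernel. Alternatively, as noted in the excerpt, one avoids the descent subtlety entirely by observing that the obstruction to a lifting in \cite{MW09} Lemma 4.2.2 and the casewise verification in \cite{AKM15} Proposition 6.2 only depend on the self-braidings $q_{tt}=\chi_t(g_t)$ and double braidings $\tilde q_{it}=q_{it}q_{ti}$, and these are unchanged under the extension $\tilde\Gamma\to\Gamma$; so the non-existence of liftings is already witnessed at the level of $\Gamma$ and no descent is needed.

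The main obstacle I anticipate is precisely this descent/equivariance bookkeeping: making sure that ``no lifting over $\tilde\Gamma$'' really implies ``no lifting over $\Gamma$'', since a priori distinct grouplikes $\tilde g_R\neq\tilde g_t$ upstairs could collapse to $g_R=g_t$ downstairs and create a lifting invisible from the $\tilde\Gamma$ vantage point. Resolving this requires either citing the equivariantization argument of \cite{AG17} carefully (as the excerpt does, crediting Angiono) or the direct check that the relevant braiding scalars are extension-invariant, which is a finite case-by-case inspection of the relations in \cite{An13} for diagonal-type Nichols algebras. Everything else --- finiteness, the categorical nature of the coradical filtration, the behavior of $\NicholsOf(-)$ under braided equivalences --- is routine.
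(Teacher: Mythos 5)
Your proposal follows essentially the same route as the paper: pass to the extension $\tilde\Gamma\to\Gamma$ of \cite{AG17} to trivialize the associator, apply the no-lifting result of \cite{AKM15} (Theorem \ref{thm_lifting}) there, and then handle the descent subtlety that distinct pairs $(\tilde g_R,\tilde\chi_R)\neq(\tilde g_t,\tilde\chi_t)$ upstairs could collapse downstairs --- resolved either via the equivariantization procedure of \cite{AG17} or by observing that the casewise arguments in \cite{AKM15} only use the extension-invariant braiding scalars $\chi_R(g_R)$ and $q_{it}q_{ti}$. This is precisely the argument given in the paragraph preceding the corollary, so your proof is correct and matches the paper's.
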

\begin{corollary}\label{lm_myNoLiftingInclusion}
If $\Nichols$ is a finite-dimensional  bialgebra in $\Vect_\Gamma^Q$ such that $P(\Nichols)=M$, then there is an injective morphism of bialgebras
$$\NicholsOf(M)\hookrightarrow \Nichols$$
\end{corollary}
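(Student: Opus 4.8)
The plan is to reduce Corollary~\ref{lm_myNoLiftingInclusion} to Corollary~\ref{lm_myNoLifting} by passing to the associated graded bialgebra with respect to the coradical filtration. First I would record the structural facts about the coradical filtration in $\Vect_\Gamma^Q$ (equivalently, after passing to the extension $\tilde\Gamma$ of Corollary~\ref{lm_myNoLifting}, in $\Vect_{\tilde\Gamma}$): since $\Nichols$ is finite-dimensional with $\eta:\1\to\Nichols$ a grouplike unit and --- because $P(\Nichols)=M$ with $M$ an object of $\Vect_\Gamma^Q$, hence a sum of $1$-dimensional objects with trivial coaction in the relevant sense --- the coradical of $\Nichols$ is $\1$; that is, $\Nichols$ is a connected coalgebra. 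Then the coradical filtration $\1=\Nichols_0\subset\Nichols_1\subset\cdots$ is a bialgebra filtration (standard: the coradical filtration is always a coalgebra filtration, and for a bialgebra whose coradical is a subbialgebra it is a bialgebra filtration), so $\mathrm{gr}\,\Nichols:=\bigoplus_k \Nichols_k/\Nichols_{k-1}$ is a graded bialgebra in $\Vect_\Gamma^Q$, again finite-dimensional and connected.

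Next I would identify the degree-$1$ part and the primitives of $\mathrm{gr}\,\Nichols$. Because $\Nichols_0=\1$, one has $\Nichols_1/\Nichols_0\cong P(\Nichols)=M$, and more importantly $P(\mathrm{gr}\,\Nichols)=\mathrm{gr}_1\Nichols=M$: in a connected graded bialgebra the primitives in degree $k\ge 2$ would, by the Nichols-algebra discussion already in the text (any homogeneous component of $P$ generates a Hopf subalgebra, and conversely the lowest-degree part of any graded Hopf ideal lies in $P$), produce obstructions; the point here is simply that the coradical filtration of $\mathrm{gr}\,\Nichols$ is its grading, so $P(\mathrm{gr}\,\Nichols)$ is concentrated in degree $1$ and equals $M$ there --- this is exactly the defining universal property recalled in Section~\ref{sec_NicholsDef}. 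Hence there is a surjection of graded bialgebras $\mathfrak{T}(M)\twoheadrightarrow\mathrm{gr}\,\Nichols$; since $\mathrm{gr}\,\Nichols$ is connected, finite-dimensional, and has primitive subspace exactly $M$, its largest graded Hopf ideal in degrees $\ge 2$ annihilates it, so $\mathrm{gr}\,\Nichols$ is a quotient of $\NicholsOf(M)$ with $P=M$, forcing $\mathrm{gr}\,\Nichols\cong\NicholsOf(M)$ (the kernel would be a nonzero graded Hopf ideal with lowest component in $P(\NicholsOf(M))=M$ of degree $1$, contradicting degree $\ge 2$). Therefore $\Nichols$ is a finite-dimensional bialgebra in $\Vect_\Gamma^Q$ whose associated graded (coradical) bialgebra is $\NicholsOf(M)$, and Corollary~\ref{lm_myNoLifting} gives $\Nichols\cong\NicholsOf(M)$ --- which is even stronger than the asserted injection; if only the injection is wanted, one alternatively takes the canonical inclusion $\NicholsOf(M)=\langle M\rangle\hookrightarrow\Nichols$ of the sub-bialgebra generated by the primitives $M\subset P(\Nichols)$ and checks it is injective by comparing associated graded objects.

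The last clause suggests the cleaner route and I would present the statement that way: the sub-bialgebra $B\subseteq\Nichols$ generated by $M=P(\Nichols)$ is connected, graded by word length (since $M\subseteq P$), finite-dimensional, with $P(B)\supseteq M$; one must show $P(B)=M$ to conclude $B\cong\NicholsOf(M)$ and hence obtain the injection $\NicholsOf(M)\cong B\hookrightarrow\Nichols$. This follows because $P(B)\subseteq P(\Nichols)=M$. So the argument is essentially immediate once the sub-bialgebra generated by primitives is in hand.

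The main obstacle, and the only genuinely non-formal point, is verifying that the coradical filtration machinery --- "coradical $=\1$", "coradical filtration is a bialgebra filtration", "its associated graded has primitives concentrated in degree $1$" --- is available in $\Vect_\Gamma^Q$ rather than only in $\Vect$; this is exactly the comparison issue flagged in Problem~\ref{problem_connected} and handled for the specific categories at hand via Lemma~\ref{lm_fibreConnected} and the extension $\tilde\Gamma\to\Gamma$ (so that one may apply a fibre functor to $\Vect$, run the classical argument, and descend, the descent being legitimate by the equivariantization remarks preceding Corollary~\ref{lm_myNoLifting}). Everything downstream of that --- surjectivity of $\mathfrak T(M)\to\mathrm{gr}\,\Nichols$, the identification with $\NicholsOf(M)$, and the final inclusion --- is routine given the universal property of the Nichols algebra recalled in Section~\ref{sec_NicholsDef}.
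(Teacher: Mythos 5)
Your second route (the sub-bialgebra $B\subseteq\Nichols$ generated by $M$) is the right strategy, but both routes as written skip over exactly the point where the deep input enters. In the first route, the step ``Hence there is a surjection of graded bialgebras $\mathfrak{T}(M)\twoheadrightarrow\mathrm{gr}\,\Nichols$'' is unjustified: what you have correctly established is that $P(\mathrm{gr}\,\Nichols)$ is concentrated in degree $1$ (true, because $\mathrm{gr}\,\Nichols$ is coradically graded), but surjectivity of $\mathfrak{T}(M)\to\mathrm{gr}\,\Nichols$ is the \emph{dual} condition, namely generation in degree $1$, and does not follow. That is precisely Theorem \ref{thm_GenDegOne} (\cite{An13}), which the paper invokes only \emph{afterwards} to upgrade the injection of Corollary \ref{lm_myNoLiftingInclusion} to a bijection; the paper's own counterexample of Lusztig's divided-power algebra is coradically graded with primitives only in degree $1$ yet not generated in degree $1$. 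So your first route ``proves'' the full isomorphism $\Nichols\cong\NicholsOf(M)$ by silently importing the second hard theorem, which defeats the point of stating the injection separately.

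In the second route the gap is the parenthetical ``graded by word length (since $M\subseteq P$)'' and the concluding ``essentially immediate.'' Being generated by primitives does not make the ideal of relations of $B=\mathrm{im}(\mathfrak{T}(M)\to\Nichols)$ homogeneous; that homogeneity is exactly the no-lifting statement, and in settings where liftings exist the relations are genuinely inhomogeneous. The correct completion, which is what the paper's placement of the two corollaries indicates, is: since $B$ is generated by $B_1=\1\oplus M$, its associated graded $\mathrm{gr}\,B$ with respect to the coradical filtration is a connected, coradically graded, finite-dimensional bialgebra that \emph{is} generated in degree $1$ (this is automatic for $B$, unlike for $\Nichols$), with $\mathrm{gr}_1 B=P(B)=M$ and $P(\mathrm{gr}\,B)=M$ by coradical-gradedness; hence $\mathrm{gr}\,B\cong\NicholsOf(M)$ by the characterization in Section \ref{sec_NicholsDef}, and \emph{then} Corollary \ref{lm_myNoLifting} applied to $B$ gives $B\cong\NicholsOf(M)\hookrightarrow\Nichols$. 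Your write-up never actually applies Corollary \ref{lm_myNoLifting} in this route, even though it is the essential and only non-formal ingredient for the injection.
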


We now want to see that the injective map in Lemma \ref{lm_myNoLiftingInclusion} is bijective. This was again settled by Angiono:

\begin{theorem}[\cite{An13} Theorem 4.15]\label{thm_GenDegOne}
    Let $H$ be a finite-dimensional Hopf algebra, whose coradical is an abelian group, then $H$ is generated by its grouplikes and skew-primitive. 
\end{theorem}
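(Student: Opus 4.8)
The plan is to run the Andruskiewitsch--Schneider strategy, in the same spirit as the lifting argument behind Theorem~\ref{thm_lifting}, but now controlling generators in degree $\geq 2$ rather than relations. Since the coradical is $H_0=\C[\Gamma]$ with $\Gamma=G(H)$ abelian, $H$ is pointed, and the standard first step is to pass to the associated graded $\mathrm{gr}\,H$ with respect to the coradical filtration: it is a finite-dimensional graded Hopf algebra with $\mathrm{gr}_0 H=\C[\Gamma]$, and $H$ is generated by grouplikes and skew-primitives if and only if $\mathrm{gr}\,H$ is generated in degree~$1$ (the reduction from $\mathrm{gr}\,H$ back to $H$ is a routine coradical-filtration argument, cf.\ \cite{AS10}). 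By Radford's biproduct theorem $\mathrm{gr}\,H\cong R\#\C[\Gamma]$, where $R=(\mathrm{gr}\,H)^{\mathrm{co}\,\C[\Gamma]}$ is a finite-dimensional graded connected braided Hopf algebra in $\YD{\C[\Gamma]}$; since $\Gamma$ is abelian, $V:=R(1)$ splits into one-dimensional Yetter--Drinfeld modules $\C_{g_t}^{\chi_t}$, i.e.\ $V$ is of \emph{diagonal type} with braiding $q_{ij}=\chi_j(g_i)$. The goal becomes: $R$ is generated by $V$.

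Next I would note that $\NicholsOf(V)$ is finite-dimensional: the subalgebra $\langle V\rangle\subseteq R$ is a graded Hopf subalgebra, hence a finite-dimensional pre-Nichols algebra of $V$, and $\NicholsOf(V)$ is a quotient of it. Thus $V$ falls under Heckenberger's classification of finite-dimensional diagonal Nichols algebras \cite{Heck09}, and $\NicholsOf(V)$ admits the explicit presentation by generators and relations of \cite{An13}. Now suppose for contradiction $\langle V\rangle\subsetneq R$, and let $m\geq 2$ be minimal with $\langle V\rangle(m)\subsetneq R(m)$; pick $0\neq z\in R(m)$ in a single $\Gamma$-weight space and outside $\langle V\rangle$. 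By minimality of $m$ one has $\Delta_R(z)-z\otimes 1-1\otimes z\in\langle V\rangle^+\otimes\langle V\rangle^+$, so $z$ is primitive modulo $\langle V\rangle$ and spans a one-dimensional Yetter--Drinfeld module $\C_{g_z}^{\chi_z}$ whose self-braiding $q_{zz}=\chi_z(g_z)$ and double braidings $q_{tz}q_{zt}$ are forced by the leading term of the coproduct of $z$ expressed in the PBW basis of $\langle V\rangle$. Exactly as in the lifting analysis \cite{MW09} behind Theorem~\ref{thm_lifting}, this $z$ is attached, up to this leading-order analysis, to one of the defining relations of $\NicholsOf(V)$ --- only now ``not imposing'' a relation produces a \emph{new generator} rather than a deformation.

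The core of the argument is then the verification, running through Heckenberger's list and the relations of \cite{An13}, that no such $z$ can exist: the enlarged Yetter--Drinfeld module $V\oplus\C_{g_z}^{\chi_z}$, with the braiding dictated above, always has an \emph{infinite-dimensional} Nichols algebra (its generalized Dynkin diagram is not in Heckenberger's finite list, or the relevant Cartan-type entry lies outside the admissible range). This contradicts $\dim R<\infty$, since a suitable associated graded of the subalgebra $\langle V,z\rangle\subseteq R$ has $\NicholsOf(V\oplus\C_{g_z}^{\chi_z})$ as a subquotient. Concretely, for each family of defining relations of $\NicholsOf(V)$ --- power-of-root-vector (truncation) relations, quantum Serre relations, and the finitely many special relations attached to particular diagrams --- one computes the weight and character it would carry as a would-be new generator and checks that the resulting self-braiding together with the double braidings against the $\C_{g_t}^{\chi_t}$ never form an admissible sub-datum; this is the same bookkeeping as in \cite{AKM15} Proposition~6.2, shifted from degree-$1$ compatibility (liftings) to degree-$\geq 2$ compatibility (new generators). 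As with Theorem~\ref{thm_lifting}, one finally checks the conclusion is unaffected by replacing $\Gamma$ with an extension $\tilde\Gamma$ trivializing the associator, which follows from the equivariantization of \cite{AG17} or directly because the obstructing braidings are invariant under such extensions.

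The main obstacle is precisely this exhaustive case analysis: it is not a soft argument but rests on the full force of Heckenberger's classification and Angiono's presentation theorem, together with the technical control of the coproduct of the hypothetical new generator (identifying which relation it comes from and computing its Yetter--Drinfeld data). A secondary technical point is the reduction step --- ensuring that $\langle V\rangle\subsetneq R$ genuinely forces a \emph{homogeneous}, primitive-modulo-$\langle V\rangle$ new generator rather than something subtler --- for which one uses that $R$ is graded connected and, where convenient, braided Nichols--Zoeller freeness of $R$ over the Hopf subalgebra $\langle V\rangle$.
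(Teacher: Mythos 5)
Your overall architecture (coradical filtration, Radford biproduct $\mathrm{gr}\,H\cong R\#\C[\Gamma]$, diagonal type, finite-dimensionality of $\NicholsOf(V)$, then a case analysis against Heckenberger's list and Angiono's presentation, plus the $\tilde\Gamma$-extension check) is the right skeleton and matches what the paper sketches. But there is a genuine gap at the pivot of the argument: you work directly in $R$ and try to treat a minimal new element $z\in R(m)\setminus\langle V\rangle(m)$ as a quasi-primitive generator of an enlarged diagonal braided vector space $V\oplus\C_{g_z}^{\chi_z}$. This cannot work as stated, because the coinvariant algebra $R$ of the coradically graded $\mathrm{gr}\,H$ satisfies $P(R)=R(1)=V$ on the nose: there are \emph{no} primitive elements of $R$ in degree $\geq 2$, and $z$ cannot be corrected by elements of $\langle V\rangle$ to become one. ``Primitive modulo $\langle V\rangle^+\otimes\langle V\rangle^+$'' is not enough to realize $\NicholsOf(V\oplus\C z)$ as a subquotient: the ``generation filtration'' you implicitly invoke (giving $z$ degree $1$) is an algebra filtration for which the coproduct is \emph{not} filtered in the required direction ($\Delta(z)$ has a component in $F_1\otimes F_1$, not in $F_0\otimes F_1+F_1\otimes F_0$), so the proposed associated graded is not a braided Hopf algebra with $\bar z$ primitive, and no subquotient of the form $\NicholsOf(V\oplus\C z)$ is produced. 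Without that, the classification of finite-dimensional diagonal Nichols algebras has nothing to bite on.

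The missing idea is the graded-duality step, which is not a technicality but the crux of Angiono's proof (and of the Andruskiewitsch--Schneider approach generally), and is exactly what the paper's review of the proof emphasizes: one first proves \cite{An13} Theorem 4.13, the statement about finite-dimensional \emph{pre}-Nichols algebras, and then obtains Theorem 4.15 by applying it to the graded dual $S=R^{*}$. Concretely, $P(R)=R(1)$ dualizes to ``$S$ is generated by $S(1)=V^{*}$,'' so $S$ is a finite-dimensional pre-Nichols algebra of $V^{*}$; and the failure of generation of $R$ in degree one dualizes to the existence of a genuinely \emph{primitive} homogeneous element $z\in P(S)$ of degree $\geq 2$ (the minimal-degree part of the kernel of $S\to\NicholsOf(V^{*})$, i.e.\ an unimposed defining relation). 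Since $V^{*}\oplus\C z\subseteq P(S)$, the subalgebra it generates is a finite-dimensional pre-Nichols algebra surjecting onto $\NicholsOf(V^{*}\oplus\C z)$, and only now does your case-by-case verification (that the enlarged braiding matrix never lies in Heckenberger's finite list) apply and yield the contradiction. Your identification of the new element with a defining relation, and your analogy with ``new generator versus deformation,'' are correct intuitions precisely because they are dual to each other --- but the concrete argument has to be run on the dual side, where the new element is honestly primitive.
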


Again we review the proof strategy and make minor adaptions: Essentially this theorem is proven as a dualization of \cite{An13} Theorem 4.13, which states that a finite-dimensional graded Hopf algebra $\Nichols$ in $\Gamma$-Yetter-Drinfeld modules with homogeneous component $\Nichols_0=1$ and generated by $\Nichols_1$ is already the Nichols algebra - that is, there are no finite-dimensional extensions of Nichols algebras of diagonal type. For such an extension, one can look at a minimal new element, which would be primitive. Then essentially the same arguments as in Theorem \ref{thm_lifting}. 
In particular the theorem does not depend on $H$ having an antipode and $\Vect_\Gamma^Q$ instead of Yetter-Drinfeld modules. \\

By combining these Nichols algebra results with Lemma \ref{lm_CoExt}, the main result in the previous section, we arrive at the following statement that characterizes a bialgebra in terms of its $\Ext^1$:

\begin{theorem}\label{thm_ReconstructNichols}
    A connected bialgebra $\Nichols$ in $\CC=\Vect_\Gamma^Q$ is isomorphic to the Nichols algebra $\NicholsOf(X)$ of the object $X=\bigoplus_{a\in\Gamma} \Ext^1_{\Comod(\Nichols)(\CC)}(\C_a,1) \C_a$. 
\end{theorem}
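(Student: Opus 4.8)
The plan is to identify the object $X$ with the primitive subobject $P(\Nichols)$, feed this into the universal property of the Nichols algebra, and then close up using the ``generation in degree one'' and ``no liftings'' results of Angiono et al.\ quoted above. First, since $\CC=\Vect_\Gamma^Q$ is semisimple, every extension of $\Nichols$-comodules is automatically $\CC$-split, and by connectedness every simple $\Nichols$-comodule is some $\C_a$ with the trivial coaction $\eta$; hence Lemma \ref{lm_CoExt} gives $\Ext^1_{\Comod(\Nichols)(\CC)}(\C_a,\1)\cong\Hom_\CC(\C_a,\Nichols^{\prim})$ for every $a\in\Gamma$. Decomposing the $\CC$-object $\Nichols^{\prim}=P(\Nichols)$ into isotypic components, $P(\Nichols)\cong\bigoplus_{a\in\Gamma}\Hom_\CC(\C_a,P(\Nichols))\,\C_a$, identifies it with $X$ compatibly with the braiding, so $\NicholsOf(X)=\NicholsOf(P(\Nichols))$.

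Next I would move to the setting in which the diagonal-type results apply. Recall that $\Nichols$ is finite-dimensional and, being connected, is automatically a Hopf algebra (\cite{Mon93}; cf.\ Problem \ref{problem_rigidity}). If the abelian $3$-cocycle of $\CC$ can be chosen with $\omega=1$, then $\CC\subset\YD{\C[\Gamma]}$ and the bosonization $\Nichols\#\C[\Gamma]$ is an ordinary finite-dimensional pointed Hopf algebra of diagonal type; in general one first replaces $\Gamma$ by the extension $\tilde\Gamma$ of \cite{AG17} trivializing $\omega$, and, as discussed before Corollary \ref{lm_myNoLifting}, the arguments below are insensitive to this. By Lemma \ref{lm_fibreConnected} the categorical connectedness of $\Nichols$ becomes ordinary connectedness, so the coradical of $\Nichols\#\C[\tilde\Gamma]$ equals $\C[\tilde\Gamma]$.

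Now Theorem \ref{thm_GenDegOne} applies to $\Nichols\#\C[\tilde\Gamma]$: it is generated by its grouplikes and skew-primitives, and since the $\Nichols$-part contributes only the grouplike $\1$ and its skew-primitives are exactly the primitives, $\Nichols$ is generated as an algebra by $P(\Nichols)=X$. On the other hand Corollary \ref{lm_myNoLiftingInclusion}, applied with $M=P(\Nichols)=X$, provides an injective bialgebra morphism $\NicholsOf(X)\hookrightarrow\Nichols$ whose image is the subalgebra generated by $X$. Combining the two, this inclusion is surjective, hence an isomorphism $\NicholsOf(X)\cong\Nichols$. Equivalently, generation in degree one forces the coradically associated graded Hopf algebra $\mathrm{gr}\,\Nichols$ to meet the hypotheses of \cite{An13} Theorem 4.13, so $\mathrm{gr}\,\Nichols\cong\NicholsOf(X)$, and then Corollary \ref{lm_myNoLifting} upgrades this to $\Nichols\cong\NicholsOf(X)$.

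The main obstacle is the second step: guaranteeing that the classification-driven results of Angiono et al., stated for Yetter--Drinfeld modules over an honest finite abelian group (equivalently, pointed Hopf algebras over $\C$ with abelian group coradical), transfer to a bialgebra living in $\Vect_\Gamma^Q$ with a possibly nontrivial associator. This is handled via the extension $\tilde\Gamma\to\Gamma$ of \cite{AG17} together with the observation---visible in the casewise analysis of \cite{AKM15}---that the obstructions to higher generators and to nontrivial liftings depend only on the braiding scalars $q_{ij}$, which are preserved by the extension, plus the transfer of connectedness in Lemma \ref{lm_fibreConnected}. Everything else is a formal assembly of the quoted statements.
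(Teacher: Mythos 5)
Your proof is correct and follows exactly the route the paper intends: the paper gives no separate argument for this theorem but states that it follows "by combining these Nichols algebra results with Lemma \ref{lm_CoExt}", and your write-up is precisely that combination --- identifying $X$ with $P(\Nichols)$ via Lemma \ref{lm_CoExt} (using semisimplicity of $\Vect_\Gamma^Q$ for the split condition), transferring connectedness via Lemma \ref{lm_fibreConnected} and the extension $\tilde\Gamma$ of \cite{AG17}, and then closing up with Theorem \ref{thm_GenDegOne} and Corollaries \ref{lm_myNoLifting}/\ref{lm_myNoLiftingInclusion}. The only caveat, inherited from the paper's own phrasing, is that finite-dimensionality of $\Nichols$ must be assumed (as you do) for the quoted classification results to apply.
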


By further applying this to the bialgebra realizing $\BB^\split$ by our Tannaka-Krein reconstruction in Section \ref{sec_splitting}, we obtain the main result in this section: 

\begin{theorem}\label{thm_ReconstructNichols2}
        Let $\BB$ be a tensor category and $\CC$ a central monoidal full subcategory, which is of the form $\CC=\Vect_\Gamma^{Q}$. Then the monoidal Serre subcategory $\BB^{\split}$ is equivalent to $\Comod(\Nichols)(\CC)$ where $\Nichols=\NicholsOf(X)$ is the Nichols algebra of the object 
        $X=\bigoplus_{a\in\Gamma} \Ext^1_{\BB}(\C_a,\1) \C_a$.
\end{theorem}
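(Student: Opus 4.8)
The plan is to obtain the statement by concatenating the two reconstruction results already in place and then translating the coefficient spaces. First I would invoke Lemma \ref{lm_splitLM}: since $\CC$ is a braided tensor category sitting as a central full abelian subcategory of the tensor category $\BB$, there is a Hopf algebra $\Nichols$ in $\CC$ together with an equivalence of monoidal categories $\BB^{\split}\cong\Comod(\Nichols)(\CC)$ carrying $\l$ to the coaction-forgetting functor and $\iota$ to the trivial-coaction functor. By the very definition of $\BB^{\split}$ (Definition \ref{def_splittingSubcat}) every simple object of $\BB^{\split}$ lies in $\CC$, and transporting this across the equivalence says exactly that every simple $\Nichols$-comodule in $\CC$ is a trivial comodule $M_\eta$ with $M\in\CC$ simple; that is, $\Nichols$ is connected in the sense of Definition \ref{def_basic}.

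Next I would check the finiteness needed for the Nichols-algebra machinery: $\BB$ is a tensor category and $\CC=\Vect_\Gamma^{Q}$, so $\Gamma$ is finite, $\CC$ is finite, $\BB^{\split}$ is finite, and hence $\Nichols$ is a finite-dimensional connected Hopf algebra in $\Vect_\Gamma^{Q}$. Then Theorem \ref{thm_ReconstructNichols} applies directly and gives $\Nichols\cong\NicholsOf(X')$ with $X'=\bigoplus_{a\in\Gamma}\Ext^1_{\Comod(\Nichols)(\CC)}(\C_a,\1)\,\C_a$. The only remaining task is to rewrite $X'$ in terms of $\BB$.

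For that I would compare three incarnations of $\Ext^1$. Under the equivalence of Lemma \ref{lm_splitLM} the simple objects $\C_a,\1\in\CC$ correspond to the trivial comodules $(\C_a)_\eta,\1_\eta$; since $\CC=\Vect_\Gamma^{Q}$ is semisimple, every short exact sequence of $\Nichols$-comodules whose underlying objects lie in $\CC$ splits over $\CC$, so the $\CC$-split $\Ext^1$ of Lemma \ref{lm_CoExt} coincides with the full $\Ext^1_{\Comod(\Nichols)(\CC)}(\C_a,\1)$, which in turn equals $\Ext^1_{\BB^{\split}}(\C_a,\1)$ by transport across the equivalence. Finally $\BB^{\split}$ is a Serre subcategory of $\BB$ (Definition \ref{def_splittingSubcat}): any extension in $\BB$ of $\C_a$ by $\1$ has a length-two composition series with factors in $\CC$, so it lies in $\BB^{\split}$, and the inclusion $\BB^{\split}\hookrightarrow\BB$ is exact and full; hence $\Ext^1_{\BB^{\split}}(\C_a,\1)=\Ext^1_{\BB}(\C_a,\1)$. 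Combining, $X'=X$ and $\BB^{\split}\cong\Comod(\NicholsOf(X))(\CC)$, as claimed.

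The genuinely hard inputs are already black-boxed upstream: the relative Tannaka--Krein/Radford reconstruction of \cite{LM24} behind Lemma \ref{lm_splitLM}, and the no-liftings and generation-in-degree-one theorems (Corollary \ref{lm_myNoLifting}, Theorem \ref{thm_GenDegOne}), together with their stability under the extension $\tilde\Gamma\to\Gamma$, behind Theorem \ref{thm_ReconstructNichols}. So at the level of this theorem I expect the only point requiring real care --- the main obstacle --- to be the bookkeeping of those three $\Ext^1$-groups, namely checking that semisimplicity of $\CC$ removes the $\CC$-split restriction and that the Serre property of $\BB^{\split}\subset\BB$ makes the ambient $\Ext^1$ agree with the one computed inside $\BB^{\split}$.
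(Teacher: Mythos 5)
Your proof is correct and follows the paper's own route exactly: apply Lemma \ref{lm_splitLM} to obtain a connected Hopf algebra realizing $\BB^{\split}$, then invoke Theorem \ref{thm_ReconstructNichols} to identify it with the Nichols algebra of the $\Ext^1$-object. The only step the paper leaves implicit --- matching $\Ext^1_{\Comod(\Nichols)(\CC)}(\C_a,\1)$, $\Ext^1_{\BB^{\split}}(\C_a,\1)$ and $\Ext^1_{\BB}(\C_a,\1)$ via semisimplicity of $\CC$ and the Serre property of $\BB^{\split}\subset\BB$ --- is precisely what you spell out, which is a sound and welcome addition.
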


\begin{remark}
Note again that the results in this section  do not require $\Nichols$ to be a Hopf algebra, so again rigidity is \emph{not} used. Indeed, a connected bialgebra is always a Hopf algebra by \cite{Mon93} Lemma 5.2.10 and Corollary 5.2.11, which goes back to Takeuchi. 
\end{remark}

\section{Proving Kazhdan-Lusztig using Frobenius-Perron dimension}\label{sec_proof1}

From the technology developed so far, we can derive a first algebraic proof of the logarithmic Kazhdan Lusztig conjecture, if we assume that sufficient structure is known on the category of representations of the vertex algebras. We recall the categorical setup:\\

Let $\UU$ be a braided tensor category (that is: finite and rigid). Let $A$ be a commutative algebra in $\UU$ and $\BB:=\UU_A$ and $\CC:=\UU_A^\loc$ the corresponding categories of modules and local modules. Let $\BB^\split \subset \BB$ as in Definition \ref{def_splittingSubcat}, then by Lemma \ref{lm_splitLM} there exists a Hopf algebra $\Nichols\in\CC$ such that 
$$\BB^\split\cong \Comod(\Nichols)(\CC).$$

\subsection{Vertex algebra setup}\label{sec_VOA}

Our application is the following situation: Let $\CC=\RepVOA(\V)$ be the category of representations of a suitable vertex operator algebra $\V$. Let $\W\subset \V$ be a conformal embedding of a suitable vertex operator algebra with $\UU=\RepVOA(\W)$. Then $A=\V$ can be considered as a commutative algebra in $\RepVOA(\W)$ and $\CC=\UU_A^\loc$, see \cites{HKL15, CKM24}.\\

We now assume that $\V=\V_\Lambda$ is a lattice vertex algebra associated to a even integral lattice $\Lambda\subset \R^n$, which we assume to be positive-definite and full-rank~$n$, so that the braided tensor category $\RepVOA(\V_\Lambda)$ is a  braided monoidal category, namely $\Vect_{\Gamma}^Q$ for $\Gamma=\Lambda^*/\Lambda$ the discriminant form, see Example~\ref{ex_quadraticSpace}. We denote by $\V_\lambda$ for $\lambda\in\Lambda^*$ the modules corresponding to $\C_\lambda$. As described for example in \cite{Len21}, let $\alpha_1,\ldots,\alpha_n$ be vectors in $\Lambda^*$ and choose a conformal structure\footnote{With such a modified conformal structure, $\RepVOA(\V_\Lambda)$ is a finite braided monoidal category with a Grothendieck-Verdier structure.} such that the conformal dimension $h(\alpha_i)=1$. Let $\zem_1,\ldots,\zem_n$ be the corresponding screening operators
$$\zem_i:\; \V_\lambda\to \V_{\lambda+\alpha_i}.$$
We define $\W\subset \V$ to be the intersection of the kernels of these screening operators $\zem_1,\ldots,\zem_n$ on $\V$. \\

As a particular class of examples, let $\g$ be a semisimple finite-dimensional complex Lie algebra with simple roots $\alpha_1,\ldots,\alpha_n$ and Killing form $(-,-)$. For a positive integer $p\geq 2$ divisible by the lacity of $\g$ let $\Lambda=\sqrt{p}\Lambda^\vee$ be the rescaled coroot lattice, then the kernel of screenings $\W=\W_{p}(\g)$ is the Feigin-Tipunin vertex algebra \cite{FT10}, see \cite{CLR23} and references therein. The braiding matrix is $q_{ij}=q^{(\alpha_i,\alpha_j)}$ for $q=e^{\pi\i/p}$. 
Then the Nichols algebra associated to this braiding $\NicholsOf(q)$ has generators $x_1,\ldots,x_n$ and fulfills quantum Serre relations. It also is the algebra generated by the screening operators $\zem_1,\ldots,\zem_n$.\footnote{Note that there is some strange behavior for very small values of $q$, namely $q^2=1$ for $ADE$ and $q^4=1$ for $BCF$ and $q^6=1$ and $q^4=1$ for $G_2$, see \cite{Len16} and in some other cases the Nichols algebra has relations in addition to the quantum Serre relations, for example $q^2=-1$ for $ADE$, see the case by case discussions in \cite{AA17}} The categories $\Mod(\NicholsOf(q))(\Vect_\Gamma^Q)$ and $\cZ_\CC(\Mod(\NicholsOf(q))$ are the representations of the Borel part and the full (quasi-)quantum group as discussed in Section \ref{sec_generalizedQuantumGroup}.

\begin{conjecture}[\emph{Logarithmic Kazhdan Lusztig correspondence}]
There is an equivalence of braided tensor categories 
$$\RepVOA(\W_p(\g))\cong \cZ_\CC(\Mod(\NicholsOf(q))).$$
The case $\sl_2$ is proven by \cites{FGST05, AM08, TW13, CGR20, CLR21, GN21, CLR23}.
\end{conjecture}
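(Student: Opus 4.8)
The plan is to realize $\BB=\RepVOA_\mathcal{V}(\mathcal{W})$ as $\UU_A$ for $\UU=\RepVOA(\mathcal{W})$ and $A=\mathcal{V}_\Lambda$, then invoke Theorem~\ref{thm_ReconstructNichols2} to identify the split subcategory $\BB^\split$ as $\Comod(\Nichols)(\CC)$ with $\CC=\RepVOA(\mathcal{V}_\Lambda)=\Vect_{\Lambda^*/\Lambda}^Q$, and finally compute the relevant $\Ext^1$-groups on the vertex-algebra side to show $\Nichols$ is the diagonal Nichols algebra $\NicholsOf(q)$. First I would check that all the hypotheses of the reconstruction machinery hold: since the unit of $\CC=\UU_A^\loc$ is the simple object $\mathcal{W}$ itself, Corollary~\ref{cor_isTensorCat} gives that $\BB=\UU_A$ is a tensor category; $\CC$ has the required form $\Vect_\Gamma^Q$ with $\Gamma=\Lambda^*/\Lambda$ and $Q$ the discriminant form; and the central structure $\CC\hookrightarrow\BB$ is the one coming from induction of local modules. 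With $\BB$ finite and $\CC$ a central full tensor subcategory of the form $\Vect_\Gamma^Q$, Theorem~\ref{thm_ReconstructNichols2} then yields a Hopf algebra $\Nichols=\NicholsOf(X)$ in $\CC$ with $\BB^\split\cong\Comod(\Nichols)(\CC)$ and $X=\bigoplus_{a\in\Gamma}\Ext^1_\BB(\C_a,\1)\,\C_a$.

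The heart of the argument is then the computation of $X$, i.e. of the spaces $\Ext^1_\BB(\C_a,\1)$ where $\C_a$ ranges over simple objects of $\CC$ (the $\V_\lambda$ with $\lambda\in\Lambda^*$, up to $\Lambda$). Here I would use the structure of the problem: the object $\C_a$ with a nontrivial extension by $\1$ in $\BB$ corresponds to a twisted $\mathcal{V}_\Lambda$-module over $\mathcal{W}$ that is single-valued over $\mathcal{W}$ but carries a logarithmic/nonlocal extension of $\mathcal{V}_\Lambda$-modules, and such extensions are governed by the screening operators. Concretely, the screening $\zem_i:\V_\lambda\to\V_{\lambda+\alpha_i}$ provides, by the $\Delta$-deformation construction recalled in Problem~\ref{prob_bigPicture} (following \cites{DLM96,Li97,FFHST02,AM09}), a nontrivial self-extension-type object realizing $\C_{\bar\alpha_i}$ in $\Ext^1_\BB(\C_{\bar\alpha_i},\1)$, where $\bar\alpha_i$ is the coset of $\alpha_i$; the condition $h(\alpha_i)=1$ and $(\alpha_i,\alpha_i)=2/p_i$ is exactly what makes $\zem_i$ a well-defined degree-preserving screening whose deformed action becomes local on the kernel $\mathcal{W}$. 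I would argue that these are the \emph{only} classes: an element of $\Ext^1_\BB(\C_a,\1)$ for $a$ not among the $\bar\alpha_i$ would, after restriction along the single-screening vertex algebra $\mathcal{W}_i=\ker\zem_i$ (a full tensor subcategory, cf.\ Lemma~\ref{lm_produceExt}), force a nontrivial extension detectable in the $\sl_2$-type situation $\ker\zem_i$, where the answer is known by \cites{FGST05, AM08, TW13, CGR20, CLR21, GN21, CLR23} to be one-dimensional and concentrated at $\bar\alpha_i$. This gives $X=\bigoplus_i\C_{\bar\alpha_i}$, and the braiding on $X$ induced from $\CC=\Vect_\Gamma^Q$ is precisely the diagonal braiding $q_{ij}=\sigma(\alpha_i,\alpha_j)=e^{\pi\i(\alpha_i,\alpha_j)}$; the pairwise-distinct-and-nonzero hypothesis on the cosets ensures $X$ is a direct sum of $n$ distinct one-dimensional objects so that the diagonal Nichols algebra $\NicholsOf(q)$ is the correct reconstruction. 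Hence $\Nichols\cong\NicholsOf(q)$ and $\BB^\split\supseteq\Comod(\NicholsOf(q))(\CC)$; dualizing (or equivalently passing to modules over the dual, which for the finite-dimensional $\NicholsOf(q)$ is $\Mod(\NicholsOf(q))(\CC)$ as linear categories) gives the asserted full embedding $\Mod(\NicholsOf(q))(\CC)\hookrightarrow\BB$.

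\textbf{Main obstacle.} The genuinely hard step is establishing that $\Ext^1_\BB(\C_a,\1)=0$ for $a\notin\{\bar\alpha_1,\dots,\bar\alpha_n\}$ and is exactly one-dimensional for $a=\bar\alpha_i$ — in other words, pinning down $X$ precisely rather than merely producing a lower bound. Producing the nonzero classes via $\Delta$-deformation is constructive and relatively safe, but controlling \emph{all} of $\Ext^1$ requires genuine input about the category $\RepVOA(\mathcal{W})$ and about twisted $\mathcal{V}_\Lambda$-modules over $\mathcal{W}$; the mechanism I would rely on is the reduction to the single-screening case through the full tensor subcategory generated by imposing more screening-kernel conditions (Lemma~\ref{lm_produceExt}), combined with the established $\sl_2$ computation. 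A secondary technical point is verifying that the conformal structure with $h(\alpha_i)=1$ indeed makes $\RepVOA(\mathcal{V}_\Lambda)$ the finite braided (Grothendieck--Verdier) category $\Vect_{\Lambda^*/\Lambda}^Q$ and that $A=\mathcal{V}_\Lambda$ is haploid and exact as an algebra in $\RepVOA(\mathcal{W})$ so that Corollary~\ref{cor_isTensorCat} applies — this is where the assumption that $\RepVOA(\mathcal{W})$ carries the HLZ braided tensor structure is used, together with simplicity of the unit in $\CC$.
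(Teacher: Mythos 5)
Your setup and the ``lower bound'' half of the argument follow the paper closely: you realize $\BB=\UU_A$ for $\UU=\RepVOA(\W)$, $A=\V_\Lambda$, invoke Theorem~\ref{thm_ReconstructNichols2} to get $\BB^\split\cong\Comod(\Nichols)(\CC)$ with $\Nichols=\NicholsOf(X)$, and produce the classes in $\Ext^1_\BB(\C_{\bar\alpha_i},\1)$ by reduction to the single-screening ($\sl_2$-type) situation via Lemma~\ref{lm_produceExt} --- the paper does exactly this in Lemma~\ref{lem_KernelOfScreenings}, using the orthogonal sublattice $\Lambda_i$ and the simple-current induction Lemma~\ref{lm_inductionIndecomposable} rather than $\Delta$-deformation, but the effect is the same: an injection $\NicholsOf(q)\hookrightarrow\Nichols$ (Corollary~\ref{cor_KernelOfScreeing}).

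The genuine gap is in your ``main obstacle'' step, and it is not merely hard --- the mechanism you propose cannot work. Lemma~\ref{lm_produceExt} says that $(\UU^\loc_{A_i})_{\bar A}$ is a \emph{full tensor subcategory} of $\UU_A$, so it gives an injection of the subcategory's $\Ext^1$ into $\Ext^1_\BB$; it does not say that every extension in $\BB$ restricts to (or is detected by) one in the subcategory. An extension of $\1$ by $\C_a$ in $\BB$ need not lie in any $(\UU^\loc_{A_i})_{\bar A}$, so you cannot conclude vanishing of $\Ext^1_\BB(\C_a,\1)$ for $a\notin\{\bar\alpha_1,\dots,\bar\alpha_n\}$ this way, nor can you rule out new simple objects of $\BB$ outside $\CC$ (i.e.\ you have not shown $\BB=\BB^\split$). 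The paper explicitly concedes both points after Corollary~\ref{cor_KernelOfScreeing} and closes the gap by an entirely different device: a Frobenius--Perron dimension count. Lemma~\ref{lm_matchBB} shows that if the full subcategory $\Mod(\NicholsOf(q))(\CC)\subset\UU_A$ already has $\FPdim$ equal to $\FPdim(A)\FPdim(\UU_A^\loc)$, then it is all of $\UU_A$ and moreover the braiding of $\UU$ is nondegenerate; the hypothesis $\FPdim(A)=\dim\NicholsOf(q)$ is then verified (conditionally) from the asymptotics of the characters of $\W_p(\g)$ in \cite{BM17}, for $\g$ simply-laced and $p>h^\vee-1$. Your proposal also stops at the embedding $\Mod(\NicholsOf(q))(\CC)\hookrightarrow\BB$ and never reaches the asserted braided equivalence $\RepVOA(\W_p(\g))\cong\cZ_\CC(\Mod(\NicholsOf(q)))$: for that you need the Schauenburg functor to be an equivalence (Theorem~\ref{thm_Schauenburg}), whose hypotheses --- finiteness, rigidity, haploidness of $A$, and crucially the trivial M\"uger center of $\UU$ --- are exactly what the dimension-count step supplies. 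Finally, note the statement is labelled a conjecture: even the paper's argument is conditional on rigidity, finiteness, and the identification of categorical with analytic dimensions.
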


\subsection{Producing elements in \texorpdfstring{$\Ext^1$}{Ext1}}

Our goal is to identify several Nichols algebras: The Nichols algebra $\NicholsOf(q)$, which underlies the generalized quantum group for which we want to prove the  logarithmic Kazhdan-Lusztig conjecture above. It is isomorphic to the Nichols algebra $\NicholsScreenings$ of screening operators. We want to prove that it also coincides with the Nichols algebra $\Nichols$ determined by $\Ext^1(\UU_A)$ using the results in Section \ref{sec_NicholsArguments}, so that it is the Hopf algebra realizing $\UU_A$ over $\UU_A^\loc$. Do do so, we now introduce a tool to produce elements in $\Ext^1(\UU_A)$ from screenings:
\begin{lemma}\label{lm_produceExt}
Let $A_i\subset A$ be a subalgebra of $A\in\UU$ and denote $A$ considered as an $A_i$-module by $\bar{A}$. Then $(\UU^\loc_{A_i})_{\bar{A}}$ is a full tensor subcategory of $\UU_{A}=(\UU_{A_i})_{\bar{A}}$. In particular there is an injective map between the respective groups $\Ext^1(M,N)$. Note this is not necessarily true for higher $\Ext^i$.
\end{lemma}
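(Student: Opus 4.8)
The plan is to exhibit $(\UU^\loc_{A_i})_{\bar A}$ as a full monoidal subcategory of $\UU_A$ using the transitivity of induction and the characterization of local modules. First I would recall the standard iterated-induction identification $\UU_A = (\UU_{A_i})_{\bar A}$: an $A$-module is the same as an $\bar A$-module in the monoidal category $\UU_{A_i}$, since $A = \bar A$ is an algebra in $\UU_{A_i}$ and $(-)\otimes_A(-) = (-)\otimes_{\bar A}(-)$ with the relative tensor products computed via the appropriate coequalizers. This is compatible with the tensor structures on both sides, so it is an equivalence of monoidal categories, and in particular induces isomorphisms on all $\Ext$-groups computed in $\UU_A$ versus in $(\UU_{A_i})_{\bar A}$.

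Next I would identify $(\UU^\loc_{A_i})_{\bar A}$ with the full subcategory of $\UU_A=(\UU_{A_i})_{\bar A}$ consisting of those $\bar A$-modules whose underlying $A_i$-module is local. The key point is that $\UU^\loc_{A_i}$ is a \emph{braided} monoidal full subcategory of $\UU_{A_i}$, so it makes sense to form $\bar A$-modules inside it (using that $\bar A$ itself is a local $A_i$-module, which holds because $A$ is commutative in $\UU$ and so the $A_i$-action on $A$ satisfies the locality condition — this is exactly where commutativity of $A$ in the ambient braided category $\UU$ enters). Since the embedding $\UU^\loc_{A_i}\hookrightarrow \UU_{A_i}$ is fully faithful and exact and closed under the relevant (co)limits, passing to $\bar A$-modules yields a fully faithful exact monoidal functor $(\UU^\loc_{A_i})_{\bar A}\hookrightarrow (\UU_{A_i})_{\bar A}=\UU_A$. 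Fullness of this embedding gives that for $M,N\in (\UU^\loc_{A_i})_{\bar A}$ the morphism spaces agree, and since the embedding is also exact (it reflects and preserves short exact sequences, because exactness can be checked after forgetting to $\UU$), the induced map on $\Ext^1$ computed by Yoneda extensions is injective: a $\bar A$-module extension of $N$ by $M$ in the subcategory that becomes split in $\UU_A$ is already split there, hence split in the subcategory. (The failure for higher $\Ext^i$ is precisely because an extension class in $\Ext^i_{(\UU^\loc_{A_i})_{\bar A}}$ may be represented only by exact sequences passing through objects \emph{not} in the subcategory, so one cannot conclude injectivity beyond $i=1$; I would include this one-line caveat but not belabor it.)

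The main obstacle I anticipate is the bookkeeping around the \emph{locality} condition under induction: one must check carefully that an $\bar A$-module $X$ in $\UU_{A_i}$ is in the image of $(\UU^\loc_{A_i})_{\bar A}$ if and only if its underlying $A_i$-module (obtained by restricting along $\eta_{A_i}\to A_i$, i.e. forgetting the $\bar A$-action down to just the $A_i$-action) is local, and that this matches the intrinsic locality condition one could alternatively impose on $X$ as an $\bar A$-module inside $\UU^\loc_{A_i}$ viewed with its own braiding. This is a diagram-chase with the two braidings (the one on $\UU$ descending to $\UU^\loc_{A_i}$, and the $A_i$-relative structure), essentially the content already used in establishing that $\UU^\loc_{A_i}$ is braided monoidal, so I expect it to go through but to require care with over- versus under-braiding conventions as flagged in Section~\ref{sec_commAlgebras}. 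Once that is in place, fullness and exactness of the embedding — and hence injectivity on $\Ext^1$ — are formal.
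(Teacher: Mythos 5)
The paper states Lemma \ref{lm_produceExt} without any proof, so there is no argument of the paper's to compare against; your proposal correctly supplies the routine verification that is evidently intended: $\bar{A}$ is a local $A_i$-module because $A$ is commutative in $\UU$ (the double braiding is absorbed by $\mu\circ c_{A,A}=\mu$), the inclusion $(\UU^\loc_{A_i})_{\bar{A}}\hookrightarrow(\UU_{A_i})_{\bar{A}}=\UU_A$ is a full, exact, monoidal embedding since $\UU^\loc_{A_i}$ is a full abelian monoidal subcategory closed under the coequalizers defining $\otimes_{\bar{A}}$, and injectivity on $\Ext^1$ then follows by fullness (a splitting or equivalence of extensions in the ambient category is automatically a morphism in the subcategory). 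Your caveat about higher $\Ext^i$ is also the right one, so I see no gaps.
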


This has the following application to kernel of screening vertex algebras, not just those associated to quantum groups. In particular it shows now that $\Nichols$ coincides with the Nichols algebras of screenings $\NicholsScreenings=\NicholsOf(q)$ for $q_{ij}=e^{\pi\i(\alpha_i,\alpha_j)}$.

\begin{lemma}\label{lem_KernelOfScreenings}
Let $\W=\bigcap_{i=0}^n\ker_{\zem_i}(\V_\Lambda)$ be the kernel of screening operators in $\V_\Lambda$. Assume that $(\alpha_i,\alpha_i)=2/p_i$ for some integer $p_i\geq 2$. Then there are nonzero elements $E_i\in\Ext^1_{\UU_A}(\C_{\alpha_i},\1)$ for $i=1,\ldots,n$ in the full tensor subcategory $(\UU^\loc_{A_i})_{{A}}$ for the subalgebra $A_i=\ker_{\zem_i}(\V_\Lambda)$ of $A$. 
\end{lemma}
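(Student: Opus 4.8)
The plan is to reduce the claim, via Lemma \ref{lm_produceExt}, to a statement about a \emph{single} screening, and then to invoke the known $\sl_2$ case. First I would fix $i$ and set $A_i=\ker_{\zem_i}(\V_\Lambda)$, the kernel of the one screening operator $\zem_i$ inside the lattice vertex algebra. By Lemma \ref{lm_produceExt} applied with this subalgebra $A_i\subset A$, the category $(\UU^\loc_{A_i})_{A}$ is a full tensor subcategory of $\UU_A$, and the induced map $\Ext^1_{(\UU^\loc_{A_i})_{A}}(\C_{\alpha_i},\1)\hookrightarrow \Ext^1_{\UU_A}(\C_{\alpha_i},\1)$ is injective; so it suffices to produce a nonzero class in the smaller category. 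Since $(\alpha_i,\alpha_i)=2/p_i$, the sublattice $\Z\alpha_i\subset\Lambda$ together with the single screening $\zem_i$ reproduces exactly the rank-one ($\sl_2$-type) situation studied in \cites{FGST05, AM08, TW13, CGR20, CLR21, GN21, CLR23}: here $\CC^{(i)}:=\RepVOA_{\V_{\Z\alpha_i}}(\ker_{\zem_i})$ is the rank-one lattice category, and $\BB^{(i)}:=(\UU^{\loc}_{A_i})_{\V_{\Z\alpha_i}}$ is the category of twisted $\V_{\Z\alpha_i}$-modules over $\ker_{\zem_i}$, which by the established $\sl_2$ correspondence is the module category of the rank-one Nichols algebra $\NicholsOf(q_{ii})$ with $q_{ii}=e^{\pi\i(\alpha_i,\alpha_i)}=e^{2\pi\i/p_i}$ (the Taft/restricted-$\sl_2$ quantum group).

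Next I would transport this across the embedding of the rank-one lattice into $\Lambda$. The point is that $\V_{\Z\alpha_i}\subset\V_\Lambda$ is a conformal (simple current) extension, and $A=\V_\Lambda$ is obtained from $\V_{\Z\alpha_i}$ by induction along the simple-current extension corresponding to $\Lambda/\Z\alpha_i$ in the appropriate sense; by the results of Section \ref{sec_simpleCurrent} (Lemma on exactness of simple-current induction and Lemma \ref{lm_inductionIndecomposable}) induction along this extension is exact and, in the fix-point-free case, carries indecomposable length-two extensions of simples to indecomposable extensions of simples. Concretely, the nonzero extension class $E_i^{(0)}\in\Ext^1_{\BB^{(i)}}(\C_{\alpha_i},\1)$ provided by the rank-one case is pushed forward under induction $\bar A\otimes_{\V_{\Z\alpha_i}}(-)$ to a nonzero class $E_i\in\Ext^1_{(\UU^\loc_{A_i})_{A}}(\C_{\alpha_i},\1)$, which is then the desired element of $\Ext^1_{\UU_A}(\C_{\alpha_i},\1)$ after the inclusion of Lemma \ref{lm_produceExt}. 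One has to check that exactness of induction indeed gives injectivity on $\Ext^1$ of the relevant objects; this follows because a splitting of the induced sequence would, by adjunction/restriction back to $\V_{\Z\alpha_i}$ together with the fix-point-free analysis in the proof of Lemma \ref{lm_inductionIndecomposable}, force a splitting of $E_i^{(0)}$.

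The main obstacle I anticipate is bookkeeping the two-step structure cleanly: one must make sure the category $(\UU^\loc_{A_i})_{A}$ is genuinely the ``twisted $\V_\Lambda$-modules over $\ker_{\zem_i}$'' and that restricting the Cartan lattice from $\Lambda^*/\Lambda$ down to the rank-one discriminant form is compatible with the tensor structures on both sides — i.e. that the rank-one sub-picture sits inside the full picture as a full \emph{tensor} subcategory, not merely a full abelian subcategory. This is exactly the flexibility in the choice of Cartan part alluded to in Section \ref{sec_simpleCurrent}, so the tools are in place; the work is to assemble Lemma \ref{lm_produceExt}, the simple-current lemmas, and the $\sl_2$ input into a single diagram. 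A secondary point is to confirm nonvanishing is preserved: the rank-one extension is nonzero because the restricted quantum group $\NicholsOf(q_{ii})$ has a nontrivial primitive (equivalently, by Lemma \ref{lm_CoExt}, $\Nichols^{\prim}\neq 0$ in the corresponding connected coalgebra), and this is genuinely a feature of the $p_i\geq 2$ hypothesis since $q_{ii}=e^{2\pi\i/p_i}\neq 1$.
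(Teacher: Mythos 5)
Your overall strategy is the one the paper uses: produce the class in the more restrictive category attached to a single screening, import the nonvanishing from the known $\sl_2$ case, and push it up by simple-current induction using Lemma \ref{lm_inductionIndecomposable}, with Lemma \ref{lm_produceExt} supplying the full tensor subcategory $(\UU^\loc_{A_i})_{A}\subset\UU_A$. However, there is a genuine gap in how you pass from rank one to rank $n$. You work with the ``sublattice'' $\Z\alpha_i\subset\Lambda$ and claim that $\V_\Lambda$ is a simple-current extension of $\V_{\Z\alpha_i}$ along $\Lambda/\Z\alpha_i$. This fails for two reasons. First, $\alpha_i$ lies in $\Lambda^*$ with $(\alpha_i,\alpha_i)=2/p_i$, so $\Z\alpha_i$ is in general neither contained in $\Lambda$ nor an even lattice, and $\V_{\Z\alpha_i}$ is not defined; the correct rank-one even sublattice is $p_i'\alpha_i\Z=\Z\alpha_i\cap\Lambda$. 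Second, and more importantly, $\Lambda/p_i'\alpha_i\Z$ is an infinite group (rank drops from $n$ to $1$), so the extension $\V_{p_i'\alpha_i\Z}\subset\V_\Lambda$ is not a simple-current extension in the sense of Section \ref{sec_simpleCurrent}, and Lemma \ref{lm_inductionIndecomposable} does not apply to it.

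The missing idea is an intermediate \emph{finite-index} sublattice obtained by adjoining the orthogonal complement: set $\Lambda_i':=\alpha_i^\perp\cap\Lambda$ and $\Lambda_i:=p_i'\alpha_i\Z\oplus^\perp\Lambda_i'$, which has finite index in $\Lambda$. Then $\V_{\Lambda_i}=\V_{p_i'\alpha_i\Z}\otimes_\C\V_{\Lambda_i'}$ and, since $\zem_i$ lives entirely in the first tensor factor, $\ker_{\zem_i}(\V_{\Lambda_i})=\ker_{\zem_i}(\V_{p_i'\alpha_i\Z})\otimes\V_{\Lambda_i'}$, with the first factor a (possibly rescaled) triplet algebra $\W_{p_i}(\sl_2)$. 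The $\sl_2$ (indeed already the singlet) input gives $\Ext^1(\V_{\alpha_i},\1)=\C$ over this factor, hence over $\V_{\Lambda_i}$ by the Deligne-product factorization. Only now is $\ker_{\zem_i}(\V_\Lambda)\supset\ker_{\zem_i}(\V_{\Lambda_i})$ a genuine (finite, fix-point-free) simple-current extension, to which Lemma \ref{lm_inductionIndecomposable} applies and carries the indecomposable extension to a nonzero class in $\Ext^1_{\ker_{\zem_i}(\V_\Lambda)}(\V_{\alpha_i},\1)$. With that repair, the rest of your argument (embedding via Lemma \ref{lm_produceExt}) goes through as in the paper.
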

\begin{proof}
We first consider the slightly smaller lattice $\Lambda_i:= p_i'\alpha_i\Z\oplus^\perp \Lambda_i'$ where $p_i'$ is some integer such that $\Z\alpha_i\cap \Lambda=p_i'\alpha_i\Z$ and $\Lambda_i'=\alpha_i^\perp\cap \Lambda$ is the orthogonal complement. This orthogonal sum $\Lambda_i$ has finite index in $\Lambda$, and we have a decomposition of the associated lattice vertex algebra
$$\V_{\Lambda_i}=\V_{p_i'\alpha_i^\vee\Z}\otimes_\C \V_{\Lambda_i'},$$
and of the corresponding categories of representations 
$$\RepVOA(\V_{\Lambda_i})=\RepVOA(\V_{p_i'\alpha_i\Z})\boxtimes \RepVOA(\V_{\Lambda_i'}).$$
Since the screening charge $\alpha_i$ is contained in the first factor and is by construction orthogonal to $\Lambda_i'$, the kernel of screenings has also a factored form  
$$\ker_{\zem_i}(\V_{\Lambda_i})
=\ker_{\zem_i}(\V_{p_i'\alpha_i\Z})
\otimes \V_{\Lambda_i'}.$$
The first factor is the triplet vertex algebra $\W_{p_i}(\sl_2)$, or possibly a version with a smaller or larger lattice if $p_i'\neq p_i$
$$\W_{p_i}^0(\sl_2) \subset \ker_{\zem_i}(\V_{p_i'\alpha_i\Z}) \subset \W_{p_i}(\sl_2) $$
From the proven logarithmic Kazhdan-Lusztig conjecture for $\sl_2$, even for the singlet vertex algebra $\W_{p_i}^0(\sl_2)$ corresponding to the unrolled quantum group, we know 
\begin{align*}
    \Ext^1_{\ker_{\zem_i}(\V_{p_i'\alpha_i\Z})}(\V_{\alpha_i},\1)&=\C^1, \\
    \Longrightarrow\qquad
\Ext^1_{\ker_{\zem_i}(\V_{\Lambda_i})}(\V_{\alpha_i},\1)&=\C^1.
\end{align*}
Now $\V_\Lambda \supset \V_{\Lambda_i}$ is a simple current extension, and correspondingly $\ker_{\zem_i}(\V_\Lambda) \supset \ker_{\zem_i}(\V_{\Lambda_i})$ is a simple current extension. For simple current extensions, induction sends simples to simples and  indecomposable extensions of simples to indecomposable extensions of simples by Lemma \ref{lm_inductionIndecomposable}, hence
\begin{align*}
    \C^1&\hookrightarrow\Ext^1_{\ker_{\zem_i}(\V_\Lambda)}(\V_{\alpha_i},\1).
\end{align*}
This concludes the claim. 
\end{proof}

Having a method to produce nonzero extension classes $E_i\in\Ext^1_{\UU}(\C_\alpha,\1)$, we have corresponding primitive elements in $\Nichols$ by Lemma \ref{lm_CoExt}, and by Theorem \ref{thm_ReconstructNichols2} we know: 

\begin{corollary}\label{cor_KernelOfScreeing}
In the situation of Lemma \ref{lem_KernelOfScreenings} we have an injection of Hopf algebras $\Vect_\Gamma^Q$  
$$\NicholsOf(q)\hookrightarrow \Nichols,$$
for $q_{ij}=e^{\pi\i(\alpha_i,\alpha_j)}$ with $1\leq i,j\leq n$. By construction, the primitive generators~$x_i$ correspond to the dual Hochschild cocycles corresponding to the extension $\1\to E_i\to \V_{\alpha_i}$.
\end{corollary}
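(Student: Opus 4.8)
The plan is to promote the extension classes produced in Lemma~\ref{lem_KernelOfScreenings} to a subobject of the generating object of $\Nichols$, and then to use the functoriality of Nichols algebras with respect to subobjects.

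First I would recall the setup of Section~\ref{sec_proof1}: $\BB=\UU_A$ is a tensor category (Corollary~\ref{cor_isTensorCat}, since the unit $\C_0$ of $\CC=\UU_A^\loc=\Vect_\Gamma^Q$ is simple) and $\BB^\split\cong\Comod(\Nichols)(\CC)$, where by Theorem~\ref{thm_ReconstructNichols2} the Hopf algebra is $\Nichols=\NicholsOf(X)$ with $X=\bigoplus_{a\in\Gamma}\Ext^1_\BB(\C_a,\1)\,\C_a$; in particular $\Nichols^{\prim}=P(\NicholsOf(X))=X$. Each extension $\1\to E_i\to\V_{\alpha_i}$ from Lemma~\ref{lem_KernelOfScreenings} has its middle term in the Serre subcategory $\BB^\split$, being an extension of objects of $\CC$, and is automatically $\CC$-split because $\CC$ is semisimple; so Lemma~\ref{lm_CoExt} associates to it a nonzero morphism $\C_{\alpha_i}\to\Nichols^{\prim}=X$, which is a monomorphism since $\C_{\alpha_i}$ is simple. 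Because the cosets $\alpha_i+\Lambda\in\Gamma$ are pairwise distinct and nonzero, these morphisms have pairwise distinct nontrivial degrees, so they combine into a monomorphism of $\CC$-objects $Y:=\bigoplus_{i=1}^n\C_{\alpha_i}\hookrightarrow X$. As a subobject of $\Vect_\Gamma^Q$, the object $Y$ carries the diagonal braiding with matrix $\sigma(\alpha_i,\alpha_j)=e^{\pi\i(\alpha_i,\alpha_j)}=q_{ij}$, so by definition $\NicholsOf(Y)=\NicholsOf(q)$.

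It remains to upgrade $Y\hookrightarrow X$ to an injective Hopf algebra morphism $\NicholsOf(Y)\hookrightarrow\NicholsOf(X)=\Nichols$. This is the standard fact that a subobject of the generating object of a Nichols algebra spans a Hopf subalgebra isomorphic to its own Nichols algebra; see \cite{HS20}. Concretely: the tensor algebra functor produces a Hopf algebra map $\mathfrak{T}(Y)\hookrightarrow\mathfrak{T}(X)$, injective because $Y$ is a direct summand of $X$ in the semisimple category $\CC$, and composing with $\mathfrak{T}(X)\twoheadrightarrow\NicholsOf(X)$ one lets $B\subseteq\NicholsOf(X)$ be the image; then $B$ is a graded connected Hopf algebra quotient of $\mathfrak{T}(Y)$ agreeing with it in degrees $0$ and $1$, so the definition of $\NicholsOf(Y)$ (the quotient of $\mathfrak{T}(Y)$ by the largest graded Hopf ideal in degree $\geq 2$) gives a surjection $\NicholsOf(Y)\twoheadrightarrow B$; a minimal-degree homogeneous element of its kernel would be primitive in $\NicholsOf(Y)$, hence lie in $P(\NicholsOf(Y))=Y$ in degree $1$, a contradiction, so the surjection is an isomorphism. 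Composing $\NicholsOf(Y)\cong B$ with $B\hookrightarrow\NicholsOf(X)$ gives the asserted injection $\NicholsOf(q)=\NicholsOf(Y)\hookrightarrow\Nichols$. (Alternatively, one may avoid Theorem~\ref{thm_ReconstructNichols2} and argue from $Y\subseteq P(\Nichols)$ directly, combining Corollary~\ref{lm_myNoLiftingInclusion} with the same subobject argument.) Finally, tracing the identifications backwards, the degree-$\alpha_i$ generator $x_i$ of $\NicholsOf(q)$ is sent to the image of the morphism $\C_{\alpha_i}\to P(\Nichols)$ attached to $E_i$ by Lemma~\ref{lm_CoExt}, that is, to the dual Hochschild cocycle of the extension $\1\to E_i\to\V_{\alpha_i}$, which is the last assertion.

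I do not expect a genuine obstacle here: the substance already sits in Lemma~\ref{lem_KernelOfScreenings} (via the $\sl_2$ case and simple-current induction) and in Theorem~\ref{thm_ReconstructNichols2} (via the classification of diagonal Nichols algebras and the lifting/generation results of Section~\ref{sec_NicholsArguments}), and what remains is the elementary subobject functoriality plus the bookkeeping of the diagonal braiding. The one point demanding a little care is precisely the hypothesis that the cosets $\alpha_i+\Lambda$ are pairwise distinct and nonzero, since this is what makes $Y$ a genuine subobject of $X$ with the intended braiding; were two cosets to coincide, the $\Ext^1$-data would instead live in a single isotypic component carrying a larger diagonal braiding. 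As a byproduct, since $\BB$ is a finite tensor category $\Nichols$ is finite-dimensional, and the injection therefore shows a posteriori that $\NicholsOf(q)$ is finite-dimensional as well.
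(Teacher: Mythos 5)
Your proposal matches the paper's (essentially unwritten) proof: the paper likewise passes from the extension classes of Lemma \ref{lem_KernelOfScreenings} to primitive elements of $\Nichols$ via Lemma \ref{lm_CoExt} and then invokes Theorem \ref{thm_ReconstructNichols2} together with the standard fact that a subobject $Y$ of the degree-one part generates a copy of $\NicholsOf(Y)$ inside $\NicholsOf(X)$. One small slip in your ad hoc verification of that fact: since the \emph{largest} graded Hopf ideal yields the \emph{smallest} quotient, the containment of ideals gives a surjection $B\twoheadrightarrow\NicholsOf(Y)$ rather than the reverse, and its kernel is killed by observing that $P(B)\subseteq P(\NicholsOf(X))=X$ forces all primitives of $B$ into degree one (or, more directly, by the quantum-symmetrizer description of the Nichols ideal, which visibly restricts from $X$ to $Y$).
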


Note that the assertion of Theorem \ref{thm_ReconstructNichols2} (that uses the knowledge of Nichols algebras of finite diagonal type) is stronger: It states that $\Nichols=\NicholsOf(X)$. However, we do not know if our previous construction covers all $\Ext^1(\1,M)$, and we do not know if $\BB=\BB^{\split}$. For example, we have not used that $\W$ is not smaller than the kernel of screenings. 

\subsection{Main proof}

Having constructed the correct Nichols algebra representation category inside $\UU_A$, it is tempting to conclude the proof by comparing Frobenius-Perron dimensions (see Section \ref{sec_FP}) of $\Nichols$ and our vertex algebra categories: 

\begin{lemma}\label{lm_matchBB}
Assume that $\UU,\UU_A$ are tensor categories and  $\BB'\subset \UU_A$ is a full monoidal subcategory and assume
$$\FPdim(\BB')=\FPdim(A)\FPdim(\UU_A^\loc).$$
Then $\BB'=\UU_A$ and $\UU_A$ is nondegenerate.
\end{lemma}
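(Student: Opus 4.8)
The plan is to sandwich $\FPdim(\BB')$ between two quantities that the hypothesis forces to coincide, in the same spirit as the proof of Theorem \ref{thm_Schauenburg}. First I would note that $\BB'$, being a full monoidal subcategory of the tensor category $\UU_A$, has rigid simple objects, is of finite length, and hence is itself a finite rigid tensor category; thus the inclusion $\BB'\hookrightarrow \UU_A$ is a fully faithful tensor functor. By Lemma \ref{lm_EGNOinjectivesurjective} this yields $\FPdim(\BB')\le \FPdim(\UU_A)$, with equality implying $\BB'=\UU_A$.

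Next I would combine the two Frobenius--Perron formulas of Section \ref{sec_FP}. Corollary \ref{cor_FP_CA} gives $\FPdim(\UU_A)=\FPdim(\UU)/\FPdim(A)$, while Lemma \ref{cor_FP_CAloc} gives $\FPdim(\UU_A^\loc)\ge \FPdim(\UU)/\FPdim(A)^2$; multiplying the latter by $\FPdim(A)$ produces
$$\FPdim(A)\,\FPdim(\UU_A^\loc)\ \ge\ \frac{\FPdim(\UU)}{\FPdim(A)}\ =\ \FPdim(\UU_A).$$
The hypothesis identifies the left-hand side with $\FPdim(\BB')$, so $\FPdim(\BB')\ge \FPdim(\UU_A)$, and together with the reverse inequality from the first step this forces $\FPdim(\BB')=\FPdim(\UU_A)$. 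By the equality clause of Lemma \ref{lm_EGNOinjectivesurjective} we conclude $\BB'=\UU_A$.

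Finally, equality propagates back up the chain, so we also obtain $\FPdim(\UU_A^\loc)=\FPdim(\UU)/\FPdim(A)^2$; by the equality clause of Lemma \ref{cor_FP_CAloc} this means the braiding of $\UU$ is nondegenerate, which is the asserted nondegeneracy (and is precisely what is needed afterwards to run the Schauenburg equivalence of Theorem \ref{thm_Schauenburg}).

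The only point that needs care is the very first step: invoking Lemma \ref{lm_EGNOinjectivesurjective} requires the inclusion to be a tensor functor between genuine finite rigid tensor categories, so one must know that $\BB'$ is closed under duals and has finite length. In the intended applications $\BB'$ is a category $\Mod(\NicholsOf(q))(\CC)$ of modules over a Nichols algebra and hence manifestly a tensor category; in general one can fall back on Theorem \ref{thm_CYrigid} to supply rigidity from rigidity of the simple objects. Past this, the argument is pure book-keeping of the four dimension (in)equalities, so I do not expect any genuine obstacle.
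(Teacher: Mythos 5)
Your proof is correct and follows essentially the same route as the paper: the same chain $\FPdim(\UU_A)\ge\FPdim(\BB')=\FPdim(A)\FPdim(\UU_A^{\loc})\ge\FPdim(\UU)/\FPdim(A)=\FPdim(\UU_A)$, forcing equality throughout, with Lemma \ref{lm_EGNOinjectivesurjective} giving $\BB'=\UU_A$ and the equality clause of Lemma \ref{cor_FP_CAloc} giving nondegeneracy. Your closing caveat about $\BB'$ needing to be a genuine finite rigid tensor category for Lemma \ref{lm_EGNOinjectivesurjective} to apply is a legitimate point the paper passes over silently, and your proposed resolutions are adequate.
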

\begin{proof}
We recalled in Corollary \ref{cor_FP_CA} and Corollary \ref{cor_FP_CAloc} the Frobenius-Peron dimensions of $\UU_A$ and an inequality of $\UU_A^\loc$, hence with the assumption we get a chain of inequalities  
$$\FPdim(\UU_A)\geq \FPdim(\BB')
=\FPdim(A)\FPdim(\catC_A^\loc)\geq \frac{\FPdim(\catC)}{\FPdim(A)},$$
that is an overall equality. This implies by Lemma \ref{lm_EGNOinjectivesurjective} that $\UU_A=\BB'$ and by the last assertion in Corollary \ref{cor_FP_CAloc} that $\UU$ has a nondegenerate braiding. 
\end{proof}

As a consequence of this observation, our main result can be strengthened as follows:

\begin{theorem}\label{thm_KLviaFP}
Let $\W$ by a kernel of screening operators in a lattice vertex algebra $\V_\Lambda$ as in Lemma \ref{lem_KernelOfScreenings}. Assume that $\RepVOA(\W)$ is a braided tensor category. Assume the Frobenius-Perron dimension of $\V_\Lambda$ over $\W$ is equal to the dimension of the diagonal rank $n$ Nichols algebra $\NicholsOf(q)$ in Corollary \ref{cor_KernelOfScreeing}. Then there is an equivalence of tensor categories resp. braided tensor categories 
\begin{align*}
\UU_A &= \Mod(\NicholsOf(q))(\CC) \\
\UU&=\cZ_\CC(\Mod(\NicholsOf(q)))
\end{align*}
This is to say, $\UU$ is equivalent to representations over a generalized quantum group in the sense of Section \ref{sec_generalizedQuantumGroup}. 
\end{theorem}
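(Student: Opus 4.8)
The plan is to exhibit inside $\UU_A$ a full tensor subcategory of exactly the predicted Frobenius--Perron dimension and then close the gap with the inequality chain of Lemma~\ref{lm_matchBB}, after which the second equivalence is a formal consequence of the Schauenburg equivalence. Before anything else I would record that $\UU$, $\UU_A$, $\UU_A^\loc$ are honest (braided) tensor categories: $\UU=\RepVOA(\W)$ is one by hypothesis, and since the tensor unit $\C_0$ of $\UU_A^\loc=\CC=\Vect_\Gamma^Q$ is simple, $A=\V_\Lambda$ is haploid and simple as an algebra, hence exact, so Corollary~\ref{cor_isTensorCat} applies.

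Next I would invoke Corollary~\ref{cor_KernelOfScreeing}: the Hopf algebra $\Nichols$ of Theorem~\ref{thm_ReconstructNichols2}, for which $\BB^\split=\Comod(\Nichols)(\CC)$, contains $\NicholsOf(q)$ with $q_{ij}=e^{\pi\i(\alpha_i,\alpha_j)}$ as a Hopf subalgebra, the primitive generators being the dual Hochschild cocycles of the extensions $\1\to E_i\to\V_{\alpha_i}$ produced in Lemma~\ref{lem_KernelOfScreenings}. Corestricting the $\Nichols$-coaction along this subcoalgebra gives a full monoidal embedding
$$\BB' := \Comod(\NicholsOf(q))(\CC)\;\hookrightarrow\;\Comod(\Nichols)(\CC)=\BB^\split\;\subseteq\;\UU_A,$$
and since $\NicholsOf(q)$ is finite-dimensional, dualizing identifies $\BB'$, as a tensor category, with the representation category $\Mod(\NicholsOf(q))(\CC)$ of the Borel part of the generalized quantum group of Section~\ref{sec_generalizedQuantumGroup}. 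I would then compute $\FPdim(\BB')$ by applying the formula of \cite{EGNO15} Proposition~6.2.4 to the surjective tensor functor $\Mod(\NicholsOf(q))(\CC)\to\CC$ (forgetting the action, with coinduction as right adjoint), evaluated at the unit; this gives $\FPdim(\BB')=\FPdim(\CC)\cdot\dim_\C\NicholsOf(q)$. Feeding in the standing hypothesis $\FPdim(A)=\dim_\C\NicholsOf(q)$ and $\CC=\UU_A^\loc$ turns this into $\FPdim(\BB')=\FPdim(A)\cdot\FPdim(\UU_A^\loc)$, which is precisely the input required by Lemma~\ref{lm_matchBB}.

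That lemma then forces $\BB'=\UU_A$, yielding the first asserted equivalence $\UU_A=\Mod(\NicholsOf(q))(\CC)$ (and incidentally $\BB^\split=\UU_A$), and simultaneously that $\UU$ has nondegenerate braiding, i.e.\ trivial Müger center. For the second line I would then apply Theorem~\ref{thm_Schauenburg}: $\UU$ and $\UU_A$ are rigid and finite, $A$ is haploid, and the Müger center of $\UU$ has just been shown to be trivial, so the Schauenburg functor is an equivalence of braided tensor categories
$$\UU\;\xrightarrow{\ \sim\ }\;\cZ_{\UU_A^\loc}(\UU_A)=\cZ_\CC\big(\Mod(\NicholsOf(q))(\CC)\big),$$
which by Section~\ref{sec_generalizedQuantumGroup} is the category of representations of the generalized quantum group attached to $\NicholsOf(q)$ and $\CC$.

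The genuinely hard content is all imported: the $\sl_2$/singlet base case behind Lemma~\ref{lem_KernelOfScreenings}, the Nichols-algebra classification together with the no-liftings and generation-in-degree-one theorems behind Theorem~\ref{thm_ReconstructNichols2}, and the assumed analytic existence of the braided tensor structure on $\RepVOA(\W)$. Within the present argument the only points requiring care are bookkeeping ones: checking that the diagonal braiding extracted from $\Ext^1_{\BB}(\C_{\alpha_i},\1)$ really is the matrix $q_{ij}=e^{\pi\i(\alpha_i,\alpha_j)}$ up to the $\Mod$/$\Comod$ (equivalently, category-reversal) convention, so that the dimension count and the relative center genuinely match those of the generalized quantum group, and the short Frobenius-reciprocity computation of $\FPdim(\Mod(\NicholsOf(q))(\CC))$. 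I expect the duality/convention bookkeeping to be the main place where a transpose or an inversion could slip in, but it is not deep.
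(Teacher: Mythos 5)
Your proposal is correct and follows essentially the same route as the paper's proof: establish that $\UU_A$ is a tensor category via Corollary \ref{cor_isTensorCat}, apply Lemma \ref{lm_matchBB} to the full subcategory coming from $\NicholsOf(q)\hookrightarrow\Nichols$ (Corollary \ref{cor_KernelOfScreeing}) using the Frobenius--Perron hypothesis to force $\BB'=\UU_A=\BB^\split$ and nondegeneracy, and then conclude with the Schauenburg equivalence of Theorem \ref{thm_Schauenburg}. You merely spell out the identification of $\BB'$ and the computation $\FPdim(\Comod(\NicholsOf(q))(\CC))=\FPdim(\CC)\dim_\C\NicholsOf(q)$, which the paper leaves implicit.
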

\begin{proof}
We first note that $\UU_A$ is a tensor category by 
Corollary \ref{cor_isTensorCat}. 

Now the assumption on $\FPdim(A)$ allows us to apply the previous Lemma~\ref{lm_matchBB} to show that $\BB=\BB^{\split}$, which is described by the Nichols algebra. This shows $\UU_A= \Mod(\Nichols)(\CC)$.

We then use that the Schauenburg functor 
is an equivalence by  Theorem~\ref{thm_Schauenburg}, which we can apply because Lemma \ref{lm_matchBB} also proved nondegeneracy of the braiding. This concludes $\UU\cong\cZ_\CC(\Mod(\Nichols(\UU))$. 
\end{proof}

\begin{remark}\label{rem_FPproof_whatnow}
It is not so clear how to get our hand on the Frobenius-Perron dimension without sufficient knowledge of the category $\UU$ and $\UU_A$, including the knowledge of some tensor products. The author can think of different strategies:
\begin{itemize}
    \item Prove that in $\UU_A$ all simple modules are local and $\Ext^1$ is as expected. This is roughly how we have proven Kazhdan-Lusztig for $\sl_2$ in \cite{CLR23}, namely from the knowledge of $\UU$ as abelian category and some fusion products and then by-hand computing $\UU_A$ from using induction and Frobenius reciprocity. The results in this article reduce the knowledge of $\UU_A$ as an abelian category to the knowledge of  $\Ext^1$. Note that this is the only case in which Theorem \ref{thm_GenDegOne} becomes fully effective (in contrast, all dimension arguments do not require to know generation in degree $1$ in advance).
    \medskip
    \item Compute $A\otimes_\W A$ and check that the composition series has $\dim\NicholsOf(q)$ many composition factors. In fact, $A\otimes_\W A$ corresponds  to $\Nichols$ as the regular $\Nichols$-module. The results in this article reduces the required knowledge from its structure as a module  to its mere size.
    \medskip
    \item In the next section we will use vertex algebra characters to compute the Frobenius-Perron dimension. This requires the assumption of rigidity and of Frobenius-Perron dimension being given by analytic expressions, for which the proof seems to depend on some Verlinde formula. 
    
    \medskip
    A much better approach in this direction would be to have a rigorous notion of dimension that can be computed and does not require rigidity (so it is in general maybe only submultiplicative). A good candidate would be the $C_1$-dimension, see \cite{MS23}. It would be an interesting problem to compute it for a kernel-of-screening vertex algebra.
    \medskip
    \item A very different method to analyze the difference between $\Nichols$ and the hypothetically larger category would be to compare the decomposition behavior of $A$ to that of the regular representation of $\Nichols$, and it is quite surprising that this gives information about the ambient category without prior knowledge of the ambient category outside of the one described by $\Nichols$. This is our goal for the next paper on this topic. 
\end{itemize}
\end{remark}

\newcommand{\M}{\mathcal{M}}
\newcommand{\dimVOA}{\chi}
\newcommand{\h}{\mathfrak{h}}
\newcommand{\qdimVOA}{\mathrm{qdim}}
\newcommand{\tr}{\mathrm{tr}}
\newcommand{\Hn}{\mathrm{H}}
\renewcommand{\q}{\mathrm{q}}

\section{Proving Kazhdan-Lusztig using asymptotics of characters}

\subsection{Flag variety and character formula}\label{sec_CharacterFormula}

We now introduce a very useful tool for vertex algebras that goes beyond categorical considerations: A vertex algebra $\V$ and all its modules $\M$ are by definition modules over the Virasoro algebra (that implements conformal symmetry), in particular they are naturally graded $\M=\bigoplus_h \M_h$ by the (possibly generalized) eigenvalues $h$ of $L_0$, called conformal weight, energy or scaling dimension. We typically require the homogeneous components $\M_h$ to be finite-dimensional and the degree bounded from below and discrete, then it makes sense to define the \emph{graded dimension} or (maybe somewhat misleading) \emph{character}\footnote{In general we have graded characters for the action of an element $a$ of the vertex algebra. For $a=1$ this reproduces the graded dimension, the cases below correspond to certain $a\in \h$. In a more general framework, this is what the conformal field theory assigns to a torus, with conformal structure given by $\q$, and to a torus with a puncture decorated by $a$ at $z$. This view suggests or explains the transformation behaviour of the former as a modular form and of the latter as a Jacobi form.}
$$\dimVOA_\M(\q) = \tr(\q^{L_0-\frac{c}{24}})
=\sum_h \dim(\M_h) \q^h,$$
where $\q$ is a formal variable, not to be confused with the root of unity $q$, and $\q^{-\frac{c}{24}}$ is a correction factor depending on the central charge of the Virasoro algebra. If there are additional gradings, these can be taken into account, for example in some cases below we have an action of the abelian Lie algebra $\h=\langle H_1,\ldots, H_n\rangle_\C$, then we consider as character
$$\dimVOA_\M(\q,z) = \tr(\q^{L_0-\frac{c}{24}}\,z^H)
=\sum_{h,\lambda} \dim(\M_{h,\lambda}) \q^h z^\lambda,$$
where $\lambda\in \h^*$ are simultaneous $\h$-eigenvalues and we abbreviate 
$$z^H=z_1^{H_1}\cdots z_n^{H_n}\quad\text{ and }\quad 
z^\lambda=z_1^{\lambda(H_1)}\cdots z_n^{\lambda(H_n)}
=z_1^{(\lambda,\omega_1)}\cdot z_n^{(\lambda,\omega_n)},$$
for $(-,-)$ some nondegenerate inner product and $\omega_1,\ldots,\omega_n$ a basis dual to $H_1,\ldots,H_n$, later the fundamental weights.

\begin{example}
    The Heisenberg vertex algebra $\mathcal{F}^n$ has modules called \emph{Fock modules} $\mathcal{F}_\lambda$ for $\lambda\in\R^n$, which are as graded vector space $\C[a_{-1},a_{-2},\ldots]^{\otimes n}e^\lambda$. Depending on a choice $Q\in \h$ parametrizing different actions of the Virasoro algebra with central charge $c=n-12|Q|^2$, the generator $e^\lambda$ has conformal weight $h=\q^{|\lambda-Q|^2-|Q|^2}$ and the $a_{-k}$ have conformal weight $k$. Accordingly, the character is 
$$\dimVOA_{\mathcal{F}_\lambda}(\q,z) 
=\left(\q^{-\frac{1}{24}}\sum_{k\geq 0}p(k)\q^{k}\right)^n \q^{|\lambda-Q|^2} z^\lambda
=\frac{1}{\eta(\q)^n}  \q^{|\lambda-Q|^2} z^\lambda,$$
    with $p(k)$ the number of partitions of the integer $k$ and $\eta(\q)$ the Dedekind eta function.\\

    The lattice vertex algebra $\V_\Lambda$ is a simple current extension of  $\mathcal{F}^n$ and has modules $\V_\lambda=\bigoplus_{\alpha\in\Lambda} \mathcal{F}_{\alpha+\lambda}$ depending on a coset $\lambda+\Lambda$ in the dual lattice $\Lambda^*$. Accordingly the character  is
$$\dimVOA_{\V_\lambda}(\q,z) 
=\frac{1}{\eta(\q)^n}  \sum_{\alpha\in \Lambda}\q^{|\alpha+\lambda-Q|^2} z^{\alpha+\lambda}
=\frac{\Theta_\lambda(\q,z)}{\eta(\q)^n}$$
    where $\Theta_\lambda(\q,z)$ is a Jacobi theta series. 
\end{example}

The character is not directly multiplicative with respect to the tensor product $\otimes_\V$, since for example $\V$ is the tensor unit. For rational vertex algebras (i.e. category of representations a finite semisimple category), the Verlinde formula  shows that the \emph{quantum dimension}
$$\qdimVOA(\M)=\lim_{\tau\to +\i\infty}\frac{\dimVOA_\M(\q)}{\dimVOA_\V(\q)},\quad \q=e^{2\pi\i\tau}$$
is a well-defined scalar and multiplicative with respect to $\otimes_\V$. For example (with $z=1$, otherwise we obtain a $\h^*$-graded dimension)
$$\qdimVOA(\mathcal{F}_\lambda)
=\lim_{\tau\to +\i\infty} \q^{|\lambda-Q|^2-|Q^2|} 
=1$$
if the inner product is positive definite. Similarly $\qdimVOA(\V_\lambda)=1$.
For more general vertex algebras, such as $\W_p(\g)$, a similar behavior is expected. For vertex algebras with continuous spectra of simple objects, a notion of regularized quantum dimensions is developed and applied to our setting in \cites{CM17,CMW17}. \\

 In the case of the Feigin-Tipunin algebra $\W_p(\g)$, the kernel of screenings in Section \ref{sec_VOA} for $\g$ simply-laced with rank $n$, there is a beautiful method to produce modules (in good cases simple modules, see below) and compute the graded characters. This approach was suggested in \cite{FT10} Section~6 and done in \cite{Sug21} Section~4.4, very useful early computations can be found in \cite{BM17}. It is inspired by the Borel-Weil-Bott theorem for Lie algebras and we shall now describe it briefly: 
 
 Consider the action of the Borel subgroup $B$ on  $\V_\Lambda$ by so-called long screening operators, take the corresponding bundle $\xi_\lambda$ on $G/B$, and take the cohomology $\Hn^0(\xi_\lambda)$. As announced in \cite{FT10} and proven in \cite{Sug21} Main Theorem we have that $\Hn^0(\xi_0)=\W_p(\g)$ and $\Hn^0(\xi_\lambda)=\W_\lambda$ coincides with the module constructed as kernel of screenings, under a certain condition on $\lambda$. Moreover, by \cite{Sug21} Corollary 4.11 the higher cohomologies vanish under the following assumption (conjecturally: always)

 \begin{assumption}\label{ass_smalllambda}
    Assume $\lambda$ fulfills $(\sqrt{p}\bar{\lambda}+\rho,\theta)\leq p$, where $\theta$ is the longest root. 
 \end{assumption}
 
 Assumption \ref{ass_smalllambda} holds for $\bar{\lambda}=0$ if $p \geq h^\vee-1$. Indeed,  the Coxeter number (which is equal to the dual Coxeter number in the simply laced case) is $h=1+\sum_{i} m_i$ if $\theta=\sum_i m_i\alpha_i$ and the dual Weyl vector (which is equal to the Weyl Vector in the simply laced case) has the property $(\rho^\vee,\alpha_i)=1$.
 
 In this case the character can be computed by the Lefschetz- or Atiyah-Bott fixed point formula, where the fixed points of the action of the Cartan group on the flag variety is given by Weyl group elements, see \cite{Sug21} formula (141)-(142) and the previous computations in \cite{BM17}:
\begin{align*}
\dimVOA_{\Hn^k(\xi_\lambda)}(\q,z)
    &=\sum_{k\geq 0} (-1)^k \tr_{\Hn^k(\xi_\lambda)}(\q^{L_0-\frac{c}{24}z^H}z^H) \\
     &=\sum_{w\in W}  \tr_{\mathcal{O}_{G/B}(U_w)\otimes \V_\lambda}(\q^{L_0-\frac{c}{24}z^H}z^H)\\
     &=\frac{1}{\eta(\q)^n}\sum_{\alpha\in Q} 
    \q^{\frac{1}{2}|\sqrt{p}(\alpha+\hat{\lambda}+\rho)+\bar{\lambda}-\frac{1}{\sqrt{p}}\rho|^2}
    \sum_{w\in W} \frac{z^{w.(\alpha+\rho+\hat{\lambda})-\rho}}{\prod_{\beta\in\Phi^{+}(\g)}(1-z^{-\beta})}\\
     &=\sum_{\alpha \in P_+\cap Q} \chi_{L_{\alpha+\hat{\lambda}}^\g}(z)\dimVOA_{T^+_{\sqrt{p}\bar{\lambda},\alpha+\hat{\lambda}}}(\q)
\end{align*}
Here $\Phi^{+}(\g)$ is the set of positive roots and $\lambda=\bar{\lambda}+\hat{\lambda}$ for $\lambda\in\frac{1}{\sqrt{p}}P$ is a presentation of $\lambda$ with the unique $\sqrt{p}P$-coset representative of the form $\bar{\lambda}=\frac{1}{\sqrt{p}}\sum_i s_i\omega_i$ with $1\leq s_i\leq p-1$ and $\hat{\lambda}$.  The second-to-last line without the factor $\frac{1}{\eta(\q)^n}$ can be interpreted as a higher rank false theta function. The last computation describes how the  module $\Hn^0(\xi_\lambda)$ decomposes after restriction into a sum of simple modules $L_{\alpha+\hat{\lambda}}^\g$ module over $\g$ times a simple module defined in \cite{ArF19} over the $\mathrm{W}$-algebra $\mathrm{W}_p(\g)$, which is the Hamiltonian reduction of the affine Lie algebra $\hat{\g}$. This decomposition is the key to many proofs in \cites{Sug21,Sug23}.\\

Then \cite{Sug23} Theorem 1.1 shows for $p\geq h^\vee-1$ that $\W_p(\g)$ is simple as a vertex algebra and in fact all modules $\W_\lambda$ are simple.

\subsection{Main proof}

\cite{BM17} Theorem 8.1 computes the quantum dimension of these modules in lattice degree $0$, i.e. as modules over the so-called singlet vertex algebra $\W_p(\g)\cap \mathcal{F}_0$. Note the very useful computation in \cite{BM17} Lemma 6.1, analogous to the Weyl dimension formula, which gives the limit $z\to 1$ of the Weyl group sum above.
 Using the modularity of these characters and asymptotic expansions they find in \cite{BM17} Theorem 6.2 that the asympotics $\tau=\i t\to \i\infty$ of the generalized false theta function is 
\begin{align*}
\eta(\q)^n \dimVOA_{\Hn^k(\xi_\lambda)}(\q,z)
&=\frac{\dim_\C(L_{-\sqrt{p}\bar{\lambda}}^\g)}{p^{|\Phi^+(\g)|}}+O(\sqrt{t}).
\end{align*}
From this follows the quantum dimensions
\begin{align}\label{formula_qdimSimple}\dimVOA_{\Hn^k(\xi_\lambda)\cap \mathcal{F}_0}(\q,z)
=\dim_\C(L_{-\sqrt{p}\bar{\lambda}}^\g),
\end{align}
and in particular $\dimVOA_{\Hn^k(\xi_0)\cap \mathcal{F}_0}(\q,z)=1$, as it should be for the tensor unit. On the other hand, it becomes clear that over this vertex algebra the Fock module has quantum dimension 
\begin{align}\label{formula_qdimFock} \dimVOA_{\Hn^k(\xi_\lambda)\cap \mathcal{F}_0}(\q,z)
=p^{|\Phi^+(\g)|}.
\end{align}
Hence we find that, over the singlet algebra and assuming $p>h^\vee-1$, the quantum dimension of $A$ coincides with the dimension of the Nichols algebra $\NicholsOf(M)$ if $q$ is a $2p$-to root of unity and $\g$ is simply laced. In this case,  Theorem~\ref{thm_KLviaFP} reads as follows:

\begin{theorem}\label{thm_KLviaQD}
    Let $\W_p(\g)\subset \V_\Lambda$ be the Feigin-Tipunin algebra for $\g$ simply-laced and $p>h^\vee-1$. Assume that $\RepVOA(\W_p(\g))$ is a braided tensor category, such that (at least for $A$) the quantum dimensions in the vertex algebra sense in Formulas \eqref{formula_qdimSimple} and \eqref{formula_qdimFock} coincides with the Frobenius-Perron  dimension in the categorical sense.     
    Then we have an equivalence  of tensor categories resp. braided tensor categories 
    \begin{align*}
        \UU_A &= \Mod(\Nichols)(\CC) \\
        \UU&=\cZ_\CC(\Mod(\Nichols)),
    \end{align*}
    where $\Nichols\cong u_q(\g)^+$ is the Nichols algebra associated to the diagonal braiding $q_{ij}=q^{(\alpha_i,\alpha_j)}$ for $q=e^{\frac{2\pi\i}{2p}}$. This is to say, $\UU$ is equivalent to representations over a (quasi-) quantum group $\tilde{u}_q(\g)$. 
\end{theorem}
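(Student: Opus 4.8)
The plan is to obtain the statement as an instance of Theorem \ref{thm_KLviaFP}: once one checks that $\FPdim(A)$ equals the $\C$-dimension of the diagonal Nichols algebra $\NicholsOf(q)$, that theorem provides both equivalences (and, in passing, nondegeneracy of the braiding of $\UU$). Here $\UU=\RepVOA(\W_p(\g))$, $A=\V_\Lambda$, and $\CC=\UU_A^\loc=\RepVOA(\V_\Lambda)=\Vect_\Gamma^Q$. First I would check that the Feigin-Tipunin data fits the framework of Lemma \ref{lem_KernelOfScreenings} and Corollary \ref{cor_KernelOfScreeing}: with $\Lambda=\sqrt p\,\Lambda^\vee$ and simple-root screenings $\zem_1,\dots,\zem_n$ one has $(\alpha_i,\alpha_i)=2/p$ with $p\geq 2$ (since $p>h^\vee-1$), the cosets $\alpha_i+\Lambda$ pairwise distinct and nonzero, and $h(\alpha_i)=1$; since the unit of $\CC$ is simple, Corollary \ref{cor_isTensorCat} makes $\UU_A$ a tensor category and $\CC$ a braided tensor category, and Corollary \ref{cor_KernelOfScreeing} already supplies an embedding $\NicholsOf(q)\hookrightarrow\Nichols$ with $q_{ij}=q^{(\alpha_i,\alpha_j)}$, $q=e^{\pi\i/p}$.

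The algebraic half of the argument is the identification of $\NicholsOf(q)$. For $\g$ simply-laced and $q=e^{\pi\i/p}$ the braiding matrix $q_{ij}=q^{(\alpha_i,\alpha_j)}$ has diagonal entries $q_{ii}=q^2$, a primitive $p$-th root of unity with $p\geq 2$ (which keeps us clear of the degenerate small-$q$ cases recalled in Section \ref{sec_VOA}), so it is of Cartan type $\g$. By the classification of finite-dimensional Nichols algebras of diagonal type \cites{Heck06, Heck09} together with Angiono's presentation by generators and relations \cite{An13}, $\NicholsOf(q)$ is the positive part $u_q(\g)^+$ of Lusztig's small quantum group, generated by $x_1,\dots,x_n$ subject to the quantum Serre relations and the truncations $x_\beta^p=0$ for $\beta\in\Phi^+(\g)$; a PBW argument then gives $\dim_\C\NicholsOf(q)=p^{|\Phi^+(\g)|}$.

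The analytic half is the computation of $\FPdim(A)$. By the hypothesis of the theorem this equals the vertex-algebra quantum dimension of $A=\V_\Lambda$ over $\W_p(\g)$, which is exactly the quantity in Formula \eqref{formula_qdimFock}. To evaluate it I would use the Feigin-Tipunin realization $\W_p(\g)=\Hn^0(\xi_0)$ of \cite{Sug21}, noting that Assumption \ref{ass_smalllambda} holds for $\lambda=0$ because $p\geq h^\vee-1$, so the higher cohomologies vanish and the character of $\W_p(\g)$ is $\eta(\q)^{-n}$ times a rank-$n$ higher false theta function; its $\tau\to\i\infty$ asymptotics are controlled by \cite{BM17} Theorem 6.2 (at $\bar\lambda=0$) with leading coefficient $p^{-|\Phi^+(\g)|}$. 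Comparing with the theta character $\Theta_0(\q)/\eta(\q)^n$ of $\V_\Lambda$ — the analysis being carried out at the level of the singlet $\W_p(\g)\cap\mathcal{F}_0$ in \cite{BM17}, which is the content of \eqref{formula_qdimSimple} and \eqref{formula_qdimFock}, and then transferred to $\V_\Lambda$ over $\W_p(\g)$ by the simple-current techniques of Section \ref{sec_simpleCurrent} — one obtains $\qdimVOA(A)=p^{|\Phi^+(\g)|}$, and hence $\FPdim(A)=\dim_\C\NicholsOf(q)$.

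With this equality in hand, Theorem \ref{thm_KLviaFP} applies verbatim: it yields $\UU_A=\Mod(\NicholsOf(q))(\CC)$ and, via the Schauenburg equivalence (Theorem \ref{thm_Schauenburg}), $\UU=\cZ_\CC(\Mod(\NicholsOf(q)))$, while the identification $\Nichols\cong\NicholsOf(q)\cong u_q(\g)^+$ is the algebraic half above; by Section \ref{sec_generalizedQuantumGroup} the category $\cZ_\CC(\Mod(u_q(\g)^+))$ is the category of modules over the (quasi-)small quantum group $\tilde u_q(\g)$, the ``quasi-'' being forced by the nontrivial associator of $\CC=\Vect_\Gamma^Q$ when $2\mid 2p$. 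The hard part will be the analytic half: both the false-theta asymptotics of \cite{BM17} and the cohomological character formula of \cite{Sug21} are substantial inputs, and — more fundamentally — the identification of the vertex-algebra quantum dimension with the categorical Frobenius-Perron dimension, which is assumed here, rests (as discussed in Remark \ref{rem_FPproof_whatnow} and Problem \ref{problem_rigidity}) on rigidity and a Verlinde-type statement; by contrast the algebraic steps are routine given the machinery already developed.
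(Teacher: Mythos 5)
Your proposal is correct and follows essentially the same route as the paper: the paper's own proof consists of the character-asymptotics computation from \cite{BM17} and \cite{Sug21} in Section \ref{sec_CharacterFormula} showing $\qdimVOA(A)=p^{|\Phi^+(\g)|}=\dim_\C\NicholsOf(q)$, followed by a direct appeal to Theorem \ref{thm_KLviaFP}. Your write-up merely fills in the same steps (verifying the hypotheses of Lemma \ref{lem_KernelOfScreenings}, the identification $\NicholsOf(q)\cong u_q(\g)^+$, and the transfer from the singlet to the lattice setting) in more detail than the paper does.
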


\section{Counterexamples}\label{sec_counterexamples}

We want to mention some situation in which the assertion cannot be true and which have to be kept in mind for the proofs in this article:

\begin{example}\label{exm_biggerThanNichols}
The screening momenta $\alpha_1,\cdots,\alpha_n$ should fulfill a condition we called \emph{supolarity} on the scalar products  $(\alpha_i,\alpha_j)$, not merely their exponentials $q_{ij}$, see \cite{Len21} Section 5.2. It roughly states that the scalar products are not too negative and it ensures the convergence of certain Selberg-type integrals. 

If this is not the case, then the algebra of screening operators $\NicholsScreenings$ can be larger then the Nichols algebra $\Nichols(q)$ with $q_{ij}=e^{\pi\i(\alpha_i,\alpha_j)}$, and in view of Section \ref{sec_NicholsArguments} necessarily infinite-dimensional. In turn it is to be expected that the kernel-of-screening vertex algebra is not $C_2$-cofinite, and one might hope for the category of representation to be related to $\NicholsScreenings$ instead.
\bigskip

As concrete examples consider for the logarithmic Kazhdan-Lusztig correspondence the case $p=1$ for $\g=\sl_3$. We have 
$$(\alpha_i,\alpha_j)=\begin{pmatrix} 2 & -1 \\ -1 & 2\end{pmatrix}$$
Then $\zem_1,\zem_2$ are at the same time long and short screening operators. A quick calculation with the usual vertex algebra OPE commutator formula shows that they generate roughly $U(\sl_3)^+$ in a version with nontrivial symmetric braiding
$$\Nichols^{Scr}=\langle \zem_1,\zem_2 \;\mid\; ([\zem_1,[\zem_1,\zem_2]_+]_-=[\zem_2,[\zem_2,\zem_1]_+]_-=0\rangle$$
However, this is not the Nichols algebra: We have 
$$q_{ij}=e^{\pi\i(\alpha_i,\alpha_j)}=\begin{pmatrix} 1 & -1 \\ -1 & 1\end{pmatrix}$$
and hence the Nichols algebra is the $q$-commutative ring 
$$\NicholsOf(q)=\langle x_1,x_2 \;\mid\; [x_1,x_2]_+=0\rangle$$
It differs by the additional primitive element $[x_1,x_2]_+$, which is central and of infinite order. Accordingly, one can determine the kernel of screenings: it is a large non-simple vertex algebra extension of the Virasoro algebra. 

\bigskip

A similar behavior appears if $p<0$, in which case the truncation relation $(\zem_i)^{p_i}=0$ is violated and instead is equal to the long screening, see \cite{Len21} Section 6.4, and this case also appears hidden in many Lie superalgebras $\sl(n|m)$ for positive $p$, see \cite{FL22}.
\end{example}

On the other hand there is examples that are known to be non-rigid and cannot fully described by the Nichols algebra in the free field realization

\begin{example}
The vertex algebra $\W_{2,3}$ is an extension of the Virasoro algebra at central charge $c=0$ and is the kernel of a single screening operator $\zem_\alpha$ with $(\alpha,\alpha)=4/3$. It
is studied in our context in \cites{RP08,GRW09} (with some parts conjectural): It has $13$ simple modules. The tensor unit is an indecomposable extension 
$$\W(2)\,\to \W\to \W(0),$$
hence the dual $\W^*$ is a non-isomorphic indecomposable extension. Some notable tensor products are
$$\W(0)\boxtimes_\W \W(0)=\W(0),\qquad
\W(0)\boxtimes_\W \W(h)=0,h\neq 0,$$
and in particular the tensor product is not exact. The Grothendieck-ring obtained by setting the class $[\W(0)]=0$ corresponds conjecturally to a quantum group $u_q(\sl_2)$. 

If we try to apply the method in the present article, we discover that in this case the induction functor $A\otimes(-)$ is not faithful anymore, as it sends $\W(0)$ to the zero object. It would be reasonable to expect, that our results then prove precisely the conjecture in the previous paragraph.
\bigskip

In principle, all $\W_{p,q}$ should be the rank $1$ building blocks of a general kernel-of-screening operator algebra, but for the reason discussed we have to restrict ourselves to those corresponding to $\W_p=\W_{p,1}$, whence the condition $(\alpha,\alpha)=2/p$ in our main theorem. One would expect that the quantum groups corresponding to $p/q$ and $q/p$ both appear in such a setting, as in the original Kazhdan-Lusztig correspondence. 
\end{example}

Let us also mention that it would be very interesting to study Hopf algebras where the coradical is not a Hopf subalgebra.

\begin{bibdiv}
\begin{biblist}
\bib{Kas97}{article}{
   author={Kassel, Ch.},
   title={Quantum groups},
   language={Spanish, with English summary},
   conference={
      title={Algebra and operator theory},
      address={Tashkent},
      date={1997},
   },
   book={
      publisher={Kluwer Acad. Publ., Dordrecht},
   },
   isbn={0-7923-5094-4},
   date={1998},
   pages={213--236},
   review={\MR{1643398}},
}
\bib{Kac97}{book}{
   author={Kac, Victor},
   title={Vertex algebras for beginners},
   series={University Lecture Series},
   volume={10},
   publisher={American Mathematical Society, Providence, RI},
   date={1997},
   pages={viii+141},
   isbn={0-8218-0643-2},
   review={\MR{1417941}},
   doi={10.1090/ulect/010},
}
\bib{FBZ04}{book}{
   author={Frenkel, Edward},
   author={Ben-Zvi, David},
   title={Vertex algebras and algebraic curves},
   series={Mathematical Surveys and Monographs},
   volume={88},
   edition={2},
   publisher={American Mathematical Society, Providence, RI},
   date={2004},
   pages={xiv+400},
   isbn={0-8218-3674-9},
   review={\MR{2082709}},
   doi={10.1090/surv/088},
}
\bib{Gan06}{book}{
   author={Gannon, Terry},
   title={Moonshine beyond the Monster},
   series={Cambridge Monographs on Mathematical Physics},
   note={The bridge connecting algebra, modular forms and physics},
   publisher={Cambridge University Press, Cambridge},
   date={2006},
   pages={xiv+477},
   isbn={978-0-521-83531-2},
   isbn={0-521-83531-3},
   review={\MR{2257727}},
   doi={10.1017/CBO9780511535116},
}
\bib{KL93}{article}{
   author={Kazhdan, D.},
   author={Lusztig, G.},
   title={Tensor structures arising from affine Lie algebras. I, II},
   journal={J. Amer. Math. Soc.},
   volume={6},
   date={1993},
   number={4},
   pages={905--947, 949--1011},
   issn={0894-0347},
   review={\MR{1186962}},
   doi={10.2307/2152745},
}
\bib{Lusz89}{article}{
   author={Lusztig, G.},
   title={Modular representations and quantum groups},
   conference={
      title={Classical groups and related topics},
      address={Beijing},
      date={1987},
   },
   book={
      series={Contemp. Math.},
      volume={82},
      publisher={Amer. Math. Soc., Providence, RI},
   },
   isbn={0-8218-5089-X},
   date={1989},
   pages={59--77},
   review={\MR{0982278}},
   doi={10.1090/conm/082/982278},
}
\bib{FGST05}{article}{
   author={Fe\u igin, B. L.},
   author={Ga\u inutdinov, A. M.},
   author={Semikhatov, A. M.},
   author={Tipunin, I. Yu.},
   title={The Kazhdan-Lusztig correspondence for the representation category
   of the triplet $W$-algebra in logorithmic conformal field theories},
   language={Russian, with Russian summary},
   journal={Teoret. Mat. Fiz.},
   volume={148},
   date={2006},
   number={3},
   pages={398--427},
   issn={0564-6162},
   translation={
      journal={Theoret. and Math. Phys.},
      volume={148},
      date={2006},
      number={3},
      pages={1210--1235},
      issn={0040-5779},
   },
   review={\MR{2283660}},
   doi={10.1007/s11232-006-0113-6},
}
\bib{AM08}{article}{
   author={Adamovi\'c, Dra\v zen},
   author={Milas, Antun},
   title={On the triplet vertex algebra $\scr W(p)$},
   journal={Adv. Math.},
   volume={217},
   date={2008},
   number={6},
   pages={2664--2699},
   issn={0001-8708},
   review={\MR{2397463}},
   doi={10.1016/j.aim.2007.11.012},
}
\bib{TW13}{article}{
   author={Tsuchiya, Akihiro},
   author={Wood, Simon},
   title={The tensor structure on the representation category of the $\scr
   W_p$ triplet algebra},
   journal={J. Phys. A},
   volume={46},
   date={2013},
   number={44},
   pages={445203, 40},
   issn={1751-8113},
   review={\MR{3120909}},
   doi={10.1088/1751-8113/46/44/445203},
}
\bib{CLR21}{arXiv}{
  author={Creutzig, Thomas},
  author={Lentner, Simon},
  author={Rupert, Matthew},
  title={Characterizing braided tensor categories associated to logarithmic vertex operator algebras},
  date={2021},
  eprint={2104.13262},
  archiveprefix={arXiv}
}
\bib{GN21}{arXiv}{
author={Gannon, Terry},
author={Negron, Cris},
title={Quantum $\mathrm{SL}(2)$ and logarithmic vertex operator algebras at $(p,1)$-central charge},
date={2023},
eprint={2104.12821
  archiveprefix={arXiv}
}
}
\bib{Drin89}{article}{
   author={Drinfel\cprime d, V. G.},
   title={Quasi-Hopf algebras},
   language={Russian},
   journal={Algebra i Analiz},
   volume={1},
   date={1989},
   number={6},
   pages={114--148},
   issn={0234-0852},
   translation={
      journal={Leningrad Math. J.},
      volume={1},
      date={1990},
      number={6},
      pages={1419--1457},
      issn={1048-9924},
   },
   review={\MR{1047964}},
}

\bib{CLR23}{arXiv}{
  author={Creutzig, Thomas},
  author={Lentner, Simon},
  author={Rupert, Matthew},
  title={An algebraic theory for logarithmic Kazhdan-Lusztig correspondences},
  date={2023},
  eprint={2306.11492},
  archiveprefix={arXiv}
}
\bib{FT10}{arXiv}{
  author={Feigin, Boris},
  author={Tipunin, Ilja},
  title={Logarithmic CFTs connected with simple Lie algebras},
  date={2010},
  eprint={1002.5047},
  archiveprefix={arXiv},
}

\bib{AM14}{article}{
   author={Adamovi\'c, Dra{\v z}en},
   author={Milas, Antun},
   title={$C_2$-cofinite $\mathcal{W}$-algebras and their logarithmic
   representations},
   conference={
      title={Conformal field theories and tensor categories},
   },
   book={
      series={Math. Lect. Peking Univ.},
      publisher={Springer, Heidelberg},
   },
   isbn={978-3-642-39382-2},
   isbn={978-3-642-39383-9},
   date={2014},
   pages={249--270},
   review={\MR{3585369}},
}

\bib{ST12}{article}{
   author={Semikhatov, A. M.},
   author={Tipunin, I. Yu.},
   title={The Nichols algebra of screenings},
   journal={Commun. Contemp. Math.},
   volume={14},
   date={2012},
   number={4},
   pages={1250029, 66},
   issn={0219-1997},
   review={\MR{2965674}},
   doi={10.1142/S0219199712500290},
}

\bib{Len21}{article}{
   author={Lentner, Simon D.},
   title={Quantum groups and Nichols algebras acting on conformal field
   theories},
   journal={Adv. Math.},
   volume={378},
   date={2021},
   pages={Paper No. 107517, 71},
   issn={0001-8708},
   review={\MR{4184294}},
   doi={10.1016/j.aim.2020.107517},
}

\bib{Sug21}{article}{
   author={Sugimoto, Shoma},
   title={On the Feigin-Tipunin conjecture},
   journal={Selecta Math. (N.S.)},
   volume={27},
   date={2021},
   number={5},
   pages={Paper No. 86, 43},
   issn={1022-1824},
   review={\MR{4305499}},
   doi={10.1007/s00029-021-00662-1},
}

\bib{Sug23}{article}{
   author={Sugimoto, Shoma},
   title={Simplicity of higher rank triplet $W$-algebras},
   journal={Int. Math. Res. Not. IMRN},
   date={2023},
   number={8},
   pages={7169--7199},
   issn={1073-7928},
   review={\MR{4574397}},
   doi={10.1093/imrn/rnac189},
}

\bib{FLM88}{book}{
   author={Frenkel, Igor},
   author={Lepowsky, James},
   author={Meurman, Arne},
   title={Vertex operator algebras and the Monster},
   series={Pure and Applied Mathematics},
   volume={134},
   publisher={Academic Press, Inc., Boston, MA},
   date={1988},
   pages={liv+508},
   isbn={0-12-267065-5},
   review={\MR{0996026}},
}
\bib{DL96}{article}{ 
   author={Dong, Chongying},
   author={Lepowsky, James},
   title={The algebraic structure of relative twisted vertex operators},
   journal={J. Pure Appl. Algebra},
   volume={110},
   date={1996},
   number={3},
   pages={259--295},
   issn={0022-4049},
   review={\MR{1393116}},
   doi={10.1016/0022-4049(95)00095-X},
}
\bib{DLM96}{article}{
   author={Dong, Chongying},
   author={Li, Haisheng},
   author={Mason, Geoffrey},
   title={Simple currents and extensions of vertex operator algebras},
   journal={Comm. Math. Phys.},
   volume={180},
   date={1996},
   number={3},
   pages={671--707},
   issn={0010-3616},
   review={\MR{1408523}},
}

\bib{HL94}{article}{
   author={Huang, Yi-Zhi},
   author={Lepowsky, James},
   title={Tensor products of modules for a vertex operator algebra and
   vertex tensor categories},
   conference={
      title={Lie theory and geometry},
   },
   book={
      series={Progr. Math.},
      volume={123},
      publisher={Birkh\"auser Boston, Boston, MA},
   },
   isbn={0-8176-3761-3},
   date={1994},
   pages={349--383},
   review={\MR{1327541}},
   doi={10.1007/978-1-4612-0261-5\_13},
}
\bib{HLZ06}{article}{
   author={Huang, Yi-Zhi},
   author={Lepowsky, James},
   author={Zhang, Lin},
   title={A logarithmic generalization of tensor product theory for modules
   for a vertex operator algebra},
   journal={Internat. J. Math.},
   volume={17},
   date={2006},
   number={8},
   pages={975--1012},
   issn={0129-167X},
   review={\MR{2261644}},
   doi={10.1142/S0129167X06003758},
}
\bib{CY21}{article}{
   author={Creutzig, Thomas},
   author={Yang, Jinwei},
   title={Tensor categories of affine Lie algebras beyond admissible levels},
   journal={Math. Ann.},
   volume={380},
   date={2021},
   number={3-4},
   pages={1991--2040},
   issn={0025-5831},
   review={\MR{4297204}},
   doi={10.1007/s00208-021-02159-w},
}
\bib{CJORY21}{article}{ 
   author={Creutzig, Thomas},
   author={Jiang, Cuipo},
   author={Orosz Hunziker, Florencia},
   author={Ridout, David},
   author={Yang, Jinwei},
   title={Tensor categories arising from the Virasoro algebra},
   journal={Adv. Math.},
   volume={380},
   date={2021},
   pages={Paper No. 107601, 35},
   issn={0001-8708},
   review={\MR{4205114}},
   doi={10.1016/j.aim.2021.107601},
}
\bib{McR21}{article}{ 
   author={McRae, Robert},
   title={A general mirror equivalence theorem for coset vertex operator
   algebras},
   journal={Sci. China Math.},
   volume={67},
   date={2024},
   number={10},
   pages={2237--2282},
   issn={1674-7283},
   review={\MR{4794620}},
   doi={10.1007/s11425-022-2181-0},
}
\bib{CMY22}{article}{ 
   author={Creutzig, Thomas},
   author={McRae, Robert},
   author={Yang, Jinwei},
   title={Tensor structure on the Kazhdan-Lusztig category for affine
   $\germ{gl}(1|1)$},
   journal={Int. Math. Res. Not. IMRN},
   date={2022},
   number={16},
   pages={12462--12515},
   issn={1073-7928},
   review={\MR{4466006}},
   doi={10.1093/imrn/rnab080},
}
\bib{CMY23}{article}{ 
   author={Creutzig, Thomas},
   author={McRae, Robert},
   author={Yang, Jinwei},
   title={Ribbon tensor structure on the full representation categories of
   the singlet vertex algebras},
   journal={Adv. Math.},
   volume={413},
   date={2023},
   pages={Paper No. 108828, 79},
   issn={0001-8708},
   review={\MR{4526492}},
   doi={10.1016/j.aim.2022.108828},
}
\bib{NORW24}{arXiv}{
  author={Nakano, Hiromu},
  author={Orosz Hunziker, Florencia},
  author={Ros Camacho, Ana},
  author={Wood, Simon},
  title={Fusion rules and rigidity for weight modules over the simple admissible affine $sl(2)$ and $N=2$ superconformal vertex operator superalgebras},
  date={2024},
  label={NORW24},
  eprint={2411.11387},
  archiveprefix={arXiv},
}

\bib{Sch01}{article}{
   author={Schauenburg, Peter},
   title={The monoidal center construction and bimodules},
   journal={J. Pure Appl. Algebra},
   volume={158},
   date={2001},
   number={2-3},
   pages={325--346},
   issn={0022-4049},
   review={\MR{1822847}},
   doi={10.1016/S0022-4049(00)00040-2},
}
\bib{CSZ25}{arXiv}{
  author={Coulembier, Kevin},
  author={Stroinski, Mateusz},
  author={Zorman, Tony},
  title={Simple algebras and exact module categories},
  date={2025},
  eprint={2501.06629},
  archiveprefix={arXiv},
}

\bib{CGR20}{article}{
   author={Creutzig, Thomas},
   author={Gainutdinov, Azat M.},
   author={Runkel, Ingo},
   title={A quasi-Hopf algebra for the triplet vertex operator algebra},
   journal={Commun. Contemp. Math.},
   volume={22},
   date={2020},
   number={3},
   pages={1950024, 71},
   issn={0219-1997},
   review={\MR{4082225}},
   doi={10.1142/S021919971950024X},
}
\bib{GLO18}{arXiv}{
  author={Gainutdinov, Azat M.},
  author={Lentner, Simon D.},
  author={Ohrmann, Tobias},
  title={Modularization of small quantum groups},
  date={2018},
  eprint={1809.02116},
  archiveprefix={arXiv},
}
\bib{Neg21}{article}{
   author={Negron, Cris},
   title={Log-modular quantum groups at even roots of unity and the quantum
   Frobenius I},
   journal={Comm. Math. Phys.},
   volume={382},
   date={2021},
   number={2},
   pages={773--814},
   issn={0010-3616},
   review={\MR{4227163}},
   doi={10.1007/s00220-021-04012-2},
}

\bib{Li97}{article}{
   author={Li, Haisheng},
   title={The physics superselection principle in vertex operator algebra
   theory},
   journal={J. Algebra},
   volume={196},
   date={1997},
   number={2},
   pages={436--457},
   issn={0021-8693},
   review={\MR{1475118}},
   doi={10.1006/jabr.1997.7126},
}
\bib{FFHST02}{article}{
   author={Fjelstad, J.},
   author={Fuchs, J.},
   author={Hwang, S.},
   author={Semikhatov, A. M.},
   author={Tipunin, I. Yu.},
   title={Logarithmic conformal field theories via logarithmic deformations},
   journal={Nuclear Phys. B},
   volume={633},
   date={2002},
   number={3},
   pages={379--413},
   issn={0550-3213},
   review={\MR{1910269}},
   doi={10.1016/S0550-3213(02)00220-1},
}
\bib{AM09}{article}{
   author={Adamovi\'c, Dra\v zen},
   author={Milas, Antun},
   title={Lattice construction of logarithmic modules for certain vertex
   algebras},
   journal={Selecta Math. (N.S.)},
   volume={15},
   date={2009},
   number={4},
   pages={535--561},
   issn={1022-1824},
   review={\MR{2565050}},
   doi={10.1007/s00029-009-0009-z},
}
\bib{Hu10}{article}{
   author={Huang, Yi-Zhi},
   title={Generalized twisted modules associated to general automorphisms of
   a vertex operator algebra},
   journal={Comm. Math. Phys.},
   volume={298},
   date={2010},
   number={1},
   pages={265--292},
   issn={0010-3616},
   review={\MR{2657819}},
   doi={10.1007/s00220-010-0999-6},
}


\bib{EGNO15}{book}{
   author={Etingof, Pavel},
   author={Gelaki, Shlomo},
   author={Nikshych, Dmitri},
   author={Ostrik, Victor},
   title={monoidal categories},
   series={Mathematical Surveys and Monographs},
   volume={205},
   publisher={American Mathematical Society, Providence, RI},
   date={2015},
   pages={xvi+343},
   isbn={978-1-4704-2024-6},
   review={\MR{3242743}},
   doi={10.1090/surv/205},
}

\bib{MacL52}{article}{
   author={MacLane, Saunders},
   title={Cohomology theory of Abelian groups},
   conference={
      title={Proceedings of the International Congress of Mathematicians,
      Cambridge, Mass., 1950, vol. 2},
   },
   book={
      publisher={Amer. Math. Soc., Providence, RI},
   },
   date={1952},
   pages={8--14},
   review={\MR{0045115}},
}
\bib{JS93}{article}{
   author={Joyal, Andr\'e},
   author={Street, Ross},
   title={Braided tensor categories},
   journal={Adv. Math.},
   volume={102},
   date={1993},
   number={1},
   pages={20--78},
   issn={0001-8708},
   review={\MR{1250465}},
   doi={10.1006/aima.1993.1055},
}

\bib{DL93}{book}{
   author={Dong, Chongying},
   author={Lepowsky, James},
   title={Generalized vertex algebras and relative vertex operators},
   series={Progress in Mathematics},
   volume={112},
   publisher={Birkh\"auser Boston, Inc., Boston, MA},
   date={1993},
   pages={x+202},
   isbn={0-8176-3721-4},
   review={\MR{1233387}},
   doi={10.1007/978-1-4612-0353-7},
}

\bib{GLM24}{arXiv}{
  author={Galindo, Cesar},
  author={Lentner, Simon D.},
  author={Möller, Sven},
  title={Computing $G$-Crossed Extensions and Orbifolds of Vertex Operator Algebras},
  date={2024},
  eprint={2409.16357},
  archiveprefix={arXiv},
}

\bib{CMY21}{article}{
   author={Creutzig, Thomas},
   author={McRae, Robert},
   author={Yang, Jinwei},
   title={On ribbon categories for singlet vertex algebras},
   journal={Comm. Math. Phys.},
   volume={387},
   date={2021},
   number={2},
   pages={865--925},
   issn={0010-3616},
   review={\MR{4315663}},
   doi={10.1007/s00220-021-04097-9},
}

\bib{BD13}{article}{
   author={Boyarchenko, Mitya},
   author={Drinfeld, Vladimir},
   title={A duality formalism in the spirit of Grothendieck and Verdier},
   journal={Quantum Topol.},
   volume={4},
   date={2013},
   number={4},
   pages={447--489},
   issn={1663-487X},
   review={\MR{3134025}},
   doi={10.4171/QT/45},
}
\bib{ALSW21}{arXiv}{ 
  author={Allen, Robert},
  author={Lentner, Simon D.},
  author={Schweigert, Christoph},
  author={Wood, Simon},
  title={Duality structures for module categories of vertex operator algebras and the Feigin Fuchs boson},
  date={2021},
  eprint={2107.05718},
  archiveprefix={arXiv},
}
\bib{FSSW23}{arXiv}{
  author={Fuchs, Jürgen},
  author={Schaumann, Gregor},
  author={Schweigert, Christoph},
  author={Wood, Simon},
  title={Grothendieck-Verdier duality in categories of bimodules and weak module functors},
  date={2023},
  eprint={2306.17668},
  archiveprefix={arXiv},
}
\bib{FSSW24}{arXiv}{
  author={Fuchs, Jürgen},
  author={Schaumann, Gregor},
  author={Schweigert, Christoph},
  author={Wood, Simon},
  title={Grothendieck-Verdier module categories, Frobenius algebras and relative Serre functors},
  date={2024},
  eprint={2405.20811},
  archiveprefix={arXiv},
}


\bib{Lau20}{article}{
   author={Laugwitz, Robert},
   title={The relative monoidal center and tensor products of monoidal
   categories},
   journal={Commun. Contemp. Math.},
   volume={22},
   date={2020},
   number={8},
   pages={1950068, 53},
   issn={0219-1997},
   review={\MR{4142329}},
   doi={10.1142/S0219199719500688},
}
\bib{LW22}{article}{
   author={Laugwitz, Robert},
   author={Walton, Chelsea},
   title={Constructing non-semisimple modular categories with relative  monoidal centers},
   journal={Int. Math. Res. Not. IMRN},
   date={2022},
   number={20},
   pages={15826--15868},
   issn={1073-7928},
   review={\MR{4498166}},
   doi={10.1093/imrn/rnab097},
}
\bib{LW23}{article}{
   author={Laugwitz, Robert},
   author={Walton, Chelsea},
   title={Constructing non-semisimple modular categories with local modules},
   journal={Comm. Math. Phys.},
   volume={403},
   date={2023},
   number={3},
   pages={1363--1409},
   issn={0010-3616},
   review={\MR{4652904}},
   doi={10.1007/s00220-023-04824-4},
}

\bib{MacL71}{book}{
   author={MacLane, Saunders},
   title={Categories for the working mathematician},
   series={Graduate Texts in Mathematics},
   volume={Vol. 5},
   publisher={Springer-Verlag, New York-Berlin},
   date={1971},
   pages={ix+262},
   review={\MR{0354798}},
}
\bib{Riehl16}{book}{
   author={Riehl, Emily},
   title={Category theory in context},
   series={Aurora Dover Modern Math Originals},
   publisher={Dover Publications, Inc., Mineola, NY},
   date={2016},
   pages={xvii+240},
   isbn={978-0-486-80903-8},
   isbn={0-486-80903-X},
   review={\MR{4727501}},
}
\bib{BV07}{article}{
   author={Brugui\`eres, Alain},
   author={Virelizier, Alexis},
   title={Hopf monads},
   journal={Adv. Math.},
   volume={215},
   date={2007},
   number={2},
   pages={679--733},
   issn={0001-8708},
   review={\MR{2355605}},
   doi={10.1016/j.aim.2007.04.011},
}
\bib{Bou91}{collection}{
   title={Category theory 1991},
   series={CMS Conference Proceedings},
   volume={13},
   booktitle={Proceedings of the International Summer Meeting held in
   Montreal, Quebec, June 23--30, 1991},
   editor={Seely, R. A. G.},
   note={Edited by R. A. G. Seely},
   publisher={American Mathematical Society, Providence, RI; for the
   Canadian Mathematical Society, Ottawa, ON},
   date={1992},
   pages={viii+447},
   isbn={0-8218-6018-6},
   review={\MR{1192136}},
}
\bib{BLV11}{article}{
   author={Brugui\`eres, Alain},
   author={Lack, Steve},
   author={Virelizier, Alexis},
   title={Hopf monads on monoidal categories},
   journal={Adv. Math.},
   volume={227},
   date={2011},
   number={2},
   pages={745--800},
   issn={0001-8708},
   review={\MR{2793022}},
   doi={10.1016/j.aim.2011.02.008},
}
\bib{Dav10}{article}{
   author={Davydov, Alexei},
   title={Centre of an algebra},
   journal={Adv. Math.},
   volume={225},
   date={2010},
   number={1},
   pages={319--348},
   issn={0001-8708},
   review={\MR{2669355}},
   doi={10.1016/j.aim.2010.02.018},
}
\bib{LW21}{article}{
   author={Laugwitz, Robert},
   author={Walton, Chelsea},
   title={Braided commutative algebras over quantized enveloping algebras},
   journal={Transform. Groups},
   volume={26},
   date={2021},
   number={3},
   pages={957--993},
   issn={1083-4362},
   review={\MR{4309556}},
   doi={10.1007/s00031-020-09599-9},
}
\bib{SY24}{arXiv}{
  author={Shimizu, Kenichi},
  author={Yadav, Harshit},
  title={Commutative exact algebras and modular monoidal categories},
  date={2024},
  eprint={arXiv:2408.06314},
  archiveprefix={arXiv},
}
\bib{BMM24}{arXiv}{
    author={Bortolussi, Noelia},
    author={Mombelli, Martin},
    author={Mejia Castano, Adriana},
    title={Central Hopf Monads and Braided Commutative Algebras},
    date={2024},
    eprint={arXiv:2409.01918 },
    archiveprefix={arXiv},
}

\bib{EO04}{article}{
   author={Etingof, Pavel},
   author={Ostrik, Viktor},
   title={Finite tensor categories},
   language={English, with English and Russian summaries},
   journal={Mosc. Math. J.},
   volume={4},
   date={2004},
   number={3},
   pages={627--654, 782--783},
   issn={1609-3321},
   review={\MR{2119143}},
   doi={10.17323/1609-4514-2004-4-3-627-654},
}
\bib{EO21}{article}{
   author={Etingof, Pavel},
   author={Ostrik, Victor},
   title={On the Frobenius functor for symmetric tensor categories in
   positive characteristic},
   journal={J. Reine Angew. Math.},
   volume={773},
   date={2021},
   pages={165--198},
   issn={0075-4102},
   review={\MR{4237968}},
   doi={10.1515/crelle-2020-0033},
}
\bib{CMSY24}{arXiv}{
  author={Creutzig, Thomas},
  author={McRae, Robert},
  author={Shimizu, Kenichi},
  author={Yadav, Harshit},
  title={Commutative algebras in Grothendieck-Verdier categories, rigidity, and vertex operator algebras},
  date={2024},
  eprint={2409.14618},
  archiveprefix={arXiv},
}

\bib{Drin86}{article}{
   author={Drinfel\cprime d, V. G.},
   title={Quantum groups},
   conference={
      title={Proceedings of the International Congress of Mathematicians,
      Vol. 1, 2},
      address={Berkeley, Calif.},
      date={1986},
   },
   book={
      publisher={Amer. Math. Soc., Providence, RI},
   },
   isbn={0-8218-0110-4},
   date={1987},
   pages={798--820},
   review={\MR{0934283}},
}
\bib{Maj98}{article}{
   author={Majid, S.},
   title={Quantum double for quasi-Hopf algebras},
   journal={Lett. Math. Phys.},
   volume={45},
   date={1998},
   number={1},
   pages={1--9},
   issn={0377-9017},
   review={\MR{1631648}},
   doi={10.1023/A:1007450123281},
}
\bib{AG03}{article}{
   author={Arkhipov, Sergey},
   author={Gaitsgory, Dennis},
   title={Another realization of the category of modules over the small
   quantum group},
   journal={Adv. Math.},
   volume={173},
   date={2003},
   number={1},
   pages={114--143},
   issn={0001-8708},
   review={\MR{1954457}},
   doi={10.1016/S0001-8708(02)00016-6},
}

\bib{LM24}{arXiv}{
author={Mombelli, Martin},
author={Lentner, Simon},
title={ Fibre functors and reconstruction of Hopf algebras },
date={2024},
eprint={arXiv:2311.14221},
archiveprefix={arXiv},
}

\bib{CN24}{arXiv}{
author={Creutzig, Thomas},
author={Niu, Wenjun},
title={Kazhdan-Lusztig Correspondence for Vertex Operator Superalgebras from Abelian Gauge Theories},
date={2024},
eprint={arXiv:2403.02403},
archiveprefix={arXiv},
}

\bib{Zimm14}{book}{
   author={Zimmermann, Alexander},
   title={Representation theory},
   series={Algebra and Applications},
   volume={19},
   note={A homological algebra point of view},
   publisher={Springer, Cham},
   date={2014},
   pages={xx+707},
   isbn={978-3-319-07967-7},
   isbn={978-3-319-07968-4},
   review={\MR{3289041}},
   doi={10.1007/978-3-319-07968-4},
}

\bib{Heck06}{article}{
   author={Heckenberger, I.},
   title={The Weyl groupoid of a Nichols algebra of diagonal type},
   journal={Invent. Math.},
   volume={164},
   date={2006},
   number={1},
   pages={175--188},
   issn={0020-9910},
   review={\MR{2207786}},
   doi={10.1007/s00222-005-0474-8},
}
\bib{Heck09}{article}{
   author={Heckenberger, I.},
   title={Classification of arithmetic root systems},
   journal={Adv. Math.},
   volume={220},
   date={2009},
   number={1},
   pages={59--124},
   issn={0001-8708},
   review={\MR{2462836}},
   doi={10.1016/j.aim.2008.08.005},
}
\bib{AHS10}{article}{
   author={Andruskiewitsch, Nicol\'as},
   author={Heckenberger, Istv\'an},
   author={Schneider, Hans-J\"urgen},
   title={The Nichols algebra of a semisimple Yetter-Drinfeld module},
   journal={Amer. J. Math.},
   volume={132},
   date={2010},
   number={6},
   pages={1493--1547},
   issn={0002-9327},
   review={\MR{2766176}},
}
\bib{CH15}{article}{
   author={Cuntz, Michael},
   author={Heckenberger, Istv\'an},
   title={Finite Weyl groupoids},
   journal={J. Reine Angew. Math.},
   volume={702},
   date={2015},
   pages={77--108},
   issn={0075-4102},
   review={\MR{3341467}},
   doi={10.1515/crelle-2013-0033},
}
\bib{HS20}{book}{
   author={Heckenberger, Istv\'an},
   author={Schneider, Hans-J\"urgen},
   title={Hopf algebras and root systems},
   series={Mathematical Surveys and Monographs},
   volume={247},
   publisher={American Mathematical Society, Providence, RI},
   date={2020},
   pages={xix+582},
   isbn={978-1-4704-5232-2},
   review={\MR{4164719}},
}
\bib{AS10}{article}{
   author={Andruskiewitsch, Nicol\'as},
   author={Schneider, Hans-J\"urgen},
   title={On the classification of finite-dimensional pointed Hopf algebras},
   journal={Ann. of Math. (2)},
   volume={171},
   date={2010},
   number={1},
   pages={375--417},
   issn={0003-486X},
   review={\MR{2630042}},
   doi={10.4007/annals.2010.171.375},
}
\bib{AY15}{article}{
   author={Angiono, Iv\'an},
   author={Yamane, Hiroyuki},
   title={The $R$-matrix of quantum doubles of Nichols algebras of diagonal
   type},
   journal={J. Math. Phys.},
   volume={56},
   date={2015},
   number={2},
   pages={021702, 19},
   issn={0022-2488},
   review={\MR{3390862}},
   doi={10.1063/1.4907379},
}
\bib{AA17}{article}{
   author={Andruskiewitsch, Nicol\'as},
   author={Angiono, Iv\'an},
   title={On finite dimensional Nichols algebras of diagonal type},
   journal={Bull. Math. Sci.},
   volume={7},
   date={2017},
   number={3},
   pages={353--573},
   issn={1664-3607},
   review={\MR{3736568}},
   doi={10.1007/s13373-017-0113-x},
}

\bib{AG19}{article}{
   author={Angiono, Ivan},
   author={Garcia Iglesias, Agustin},
   title={Pointed Hopf algebras: a guided tour to the liftings},
   journal={Rev. Colombiana Mat.},
   volume={53},
   date={2019},
   pages={1--44},
   issn={0034-7426},
   review={\MR{4053365}},
}
\bib{Heck10}{article}{ 
   author={Heckenberger, I.},
   title={Lusztig isomorphisms for Drinfel\cprime d{} doubles of
   bosonizations of Nichols algebras of diagonal type},
   journal={J. Algebra},
   volume={323},
   date={2010},
   number={8},
   pages={2130--2182},
   issn={0021-8693},
   review={\MR{2596372}},
   doi={10.1016/j.jalgebra.2010.02.013},
}
\bib{AG17}{article}{
   author={Angiono, Iv\'an},
   author={Galindo, C\'esar},
   title={Pointed finite tensor categories over abelian groups},
   journal={Internat. J. Math.},
   volume={28},
   date={2017},
   number={11},
   pages={1750087, 18},
   issn={0129-167X},
   review={\MR{3714361}},
   doi={10.1142/S0129167X17500872},
}
\bib{An13}{article}{
   author={Angiono, Iv\'an},
   title={On Nichols algebras of diagonal type},
   journal={J. Reine Angew. Math.},
   volume={683},
   date={2013},
   pages={189--251},
   issn={0075-4102},
   review={\MR{3181554}},
   doi={10.1515/crelle-2011-0008},
}
\bib{AG11}{article}{
   author={Angiono, Ivan},
   author={Garcia Iglesias, Agustin},
   title={Pointed Hopf algebras with standard braiding are generated in
   degree one},
   conference={
      title={Groups, algebras and applications},
   },
   book={
      series={Contemp. Math.},
      volume={537},
      publisher={Amer. Math. Soc., Providence, RI},
   },
   isbn={978-0-8218-5239-2},
   date={2011},
   pages={57--70},
   review={\MR{2799091}},
   doi={10.1090/conm/537/10566},
}
\bib{MW09}{article}{
   author={Mastnak, Mitja},
   author={Witherspoon, Sarah},
   title={Bialgebra cohomology, pointed Hopf algebras, and deformations},
   journal={J. Pure Appl. Algebra},
   volume={213},
   date={2009},
   number={7},
   pages={1399--1417},
   issn={0022-4049},
   review={\MR{2497585}},
   doi={10.1016/j.jpaa.2008.12.004},
}
\bib{AKM15}{article}{
   author={Angiono, Iv\'an},
   author={Kochetov, Mikhail},
   author={Mastnak, Mitja},
   title={On rigidity of Nichols algebras},
   journal={J. Pure Appl. Algebra},
   volume={219},
   date={2015},
   number={12},
   pages={5539--5559},
   issn={0022-4049},
   review={\MR{3390038}},
   doi={10.1016/j.jpaa.2015.05.032},
}
\bib{Mon93}{book}{
   author={Montgomery, Susan},
   title={Hopf algebras and their actions on rings},
   series={CBMS Regional Conference Series in Mathematics},
   volume={82},
   publisher={Conference Board of the Mathematical Sciences, Washington, DC;
   by the American Mathematical Society, Providence, RI},
   date={1993},
   pages={xiv+238},
   isbn={0-8218-0738-2},
   review={\MR{1243637}},
   doi={10.1090/cbms/082},
}
\bib{Len16}{article}{
   author={Lentner, Simon},
   title={A Frobenius homomorphism for Lusztig's quantum groups for
   arbitrary roots of unity},
   journal={Commun. Contemp. Math.},
   volume={18},
   date={2016},
   number={3},
   pages={1550040, 42},
   issn={0219-1997},
   review={\MR{3477402}},
   doi={10.1142/S0219199715500406},
}

\bib{HKL15}{article}{
   author={Huang, Yi-Zhi},
   author={Kirillov, Alexander, Jr.},
   author={Lepowsky, James},
   title={Braided tensor categories and extensions of vertex operator
   algebras},
   journal={Comm. Math. Phys.},
   volume={337},
   date={2015},
   number={3},
   pages={1143--1159},
   issn={0010-3616},
   review={\MR{3339173}},
   doi={10.1007/s00220-015-2292-1},
}
\bib{CKM24}{article}{
   author={Creutzig, Thomas},
   author={Kanade, Shashank},
   author={McRae, Robert},
   title={Tensor categories for vertex operator superalgebra extensions},
   journal={Mem. Amer. Math. Soc.},
   volume={295},
   date={2024},
   number={1472},
   pages={vi+181},
   issn={0065-9266},
   isbn={978-1-4704-6724-1; 978-1-4704-7772-1},
   review={\MR{4720880}},
   doi={10.1090/memo/1472},
}

\bib{ArF19}{article}{ 
   author={Arakawa, Tomoyuki},
   author={Frenkel, Edward},
   title={Quantum Langlands duality of representations of $\mathcal{W}$-algebras},
   journal={Compos. Math.},
   volume={155},
   date={2019},
   number={12},
   pages={2235--2262},
   issn={0010-437X},
   review={\MR{4016057}},
   doi={10.1112/s0010437x19007553},
}
\bib{BM17}{article}{
   author={Bringmann, Kathrin},
   author={Milas, Antun},
   title={$W$-algebras, higher rank false theta functions, and quantum
   dimensions},
   journal={Selecta Math. (N.S.)},
   volume={23},
   date={2017},
   number={2},
   pages={1249--1278},
   issn={1022-1824},
   review={\MR{3624911}},
   doi={10.1007/s00029-016-0289-z},
}
\bib{CMW17}{article}{
   author={Creutzig, Thomas},
   author={Milas, Antun},
   author={Wood, Simon},
   title={On regularised quantum dimensions of the singlet vertex operator
   algebra and false theta functions},
   journal={Int. Math. Res. Not. IMRN},
   date={2017},
   number={5},
   pages={1390--1432},
   issn={1073-7928},
   review={\MR{3658169}},
   doi={10.1093/imrn/rnw037},
}
\bib{CM17}{article}{
   author={Creutzig, Thomas},
   author={Milas, Antun},
   title={Higher rank partial and false theta functions and representation
   theory},
   journal={Adv. Math.},
   volume={314},
   date={2017},
   pages={203--227},
   issn={0001-8708},
   review={\MR{3658716}},
   doi={10.1016/j.aim.2017.04.027},
}

\bib{MS23}{arXiv}{
author={McRae, Robert},
author={Sopin, Valierii},
title={Fusion and (non)-rigidity of Virasoro Kac modules in logarithmic minimal models at (p,q)-central charge},
date={2023},
eprint={arXiv:2302.08907},
archiveprefix={arXiv},
}

\bib{FFR06}{article}{
   author={Fr\"ohlich, J\"urg},
   author={Fuchs, J\"urgen},
   author={Runkel, Ingo},
   author={Schweigert, Christoph},
   title={Correspondences of ribbon categories},
   journal={Adv. Math.},
   volume={199},
   date={2006},
   number={1},
   pages={192--329},
   issn={0001-8708},
   review={\MR{2187404}},
   doi={10.1016/j.aim.2005.04.007},
}

\bib{FL22}{article}{
   author={Flandoli, Ilaria},
   author={Lentner, Simon D.},
   title={Algebras of non-local screenings and diagonal Nichols algebras},
   journal={SIGMA Symmetry Integrability Geom. Methods Appl.},
   volume={18},
   date={2022},
   pages={Paper No. 018, 81},
   review={\MR{4392114}},
   doi={10.3842/SIGMA.2022.018},
}
\bib{RP08}{article}{
   author={Rasmussen, J\o rgen},
   author={Pearce, Paul A.},
   title={$\scr W$-extended fusion algebra of critical percolation},
   journal={J. Phys. A},
   volume={41},
   date={2008},
   number={29},
   pages={295208, 30},
   issn={1751-8113},
   review={\MR{2455281}},
   doi={10.1088/1751-8113/41/29/295208},
}
\bib{GRW09}{article}{
   author={Gaberdiel, Matthias R.},
   author={Runkel, Ingo},
   author={Wood, Simon},
   title={Fusion rules and boundary conditions in the $c=0$ triplet model},
   journal={J. Phys. A},
   volume={42},
   date={2009},
   number={32},
   pages={325403, 43},
   issn={1751-8113},
   review={\MR{2525862}},
   doi={10.1088/1751-8113/42/32/325403},
}

\end{biblist}
\end{bibdiv}




\end{document}